\newtheorem{Theorem}{Theorem}  
\newtheorem{Corollary}[Theorem]{Corollary}  
\newtheorem{Proposition}[Theorem]{Proposition}
\newtheorem{Remark}[Theorem]{Remark}
\newtheorem{Definition}[Theorem]{Definition}
\DeclareMathOperator{\Span}{span}
\DeclareMathOperator{\Ker}{ker}
\DeclareMathOperator{\Ran}{Ran}
\DeclareMathOperator{\rank}{rank}
  \newcounter{tak}
\newcommand{\ov}{\overset{ \mbox{\tiny def}}{=}}
\newcommand{\vct}[2]{{\left (\begin{array}{c}\displaystyle #1\\\displaystyle #2\end{array}\right )}}
\newcommand{\vctt}[3]{{\left (\begin{array}{c}\displaystyle #1\\\displaystyle #2\\\displaystyle #3\end{array}\right )}}
\newcommand{\mtx}[9]{ \left (
\begin{array}{ccc}
\displaystyle #1& \quad \displaystyle #2& \quad \displaystyle #3 \\
\displaystyle #4 & \quad \displaystyle #5& \quad \displaystyle #6 \\
\displaystyle #7 & \quad \displaystyle #8& \quad \displaystyle #9 \\
\end{array}
\right )}
\def\Tilde{\widetilde}
\title{On the Fattorini Criterion for Approximate Controllability and Stabilizability of Parabolic Systems}
\author{Mehdi Badra
\thanks{LMAP, UMR CNRS 5142,
    UNIV PAU \& PAYS ADOUR, 64013 Pau Cedex,
    France ({\tt mehdi.badra@univ-pau.fr}).}, Tak\'eo Takahashi
\thanks{Inria, Villers-l\`es-Nancy, F-54600, France ({\tt takeo.takahashi@inria.fr}),}
\thanks{Universit\'e de Lorraine, IECN, UMR 7502,
Vandoeuvre-l\`es-Nancy, F-54506, France,}
\thanks{CNRS, IECN, UMR 7502, Vandoeuvre-l\`es-Nancy, F-54506, France.}
}
\begin{document}

\tableofcontents

\maketitle

\begin{abstract}
In this paper, we consider the well-known Fattorini's criterion 
for approximate controllability of infinite dimensional linear systems of type $y'=A y+Bu$. We precise the result 
proved by H. O. Fattorini in \cite{Fattorini1966} for bounded input $B$, in the case where $B$ can be unbounded or in the case of finite-dimensional controls. 
More precisely, we prove that if Fattorini's criterion is satisfied and if the set of geometric
multiplicities of $A$ is bounded then approximate controllability can be achieved with finite dimensional controls.

An important consequence of this result consists in using the Fattorini's criterion to obtain the feedback stabilizability of linear and nonlinear parabolic systems
with feedback controls in a finite dimensional space. 

In particular, for systems described by partial differential equations, such a criterion reduces to a unique continuation theorem for a stationary system.
We illustrate such a method by tackling some coupled Navier-Stokes type equations (MHD system and micropolar fluid system) and
we sketch a systematic procedure relying on Fattorini's criterion for checking stabilizability of such nonlinear systems. In that case, the unique continuation theorems
rely on local Carleman inequalities for stationary Stokes type systems.
\end{abstract}
{\bf Key words.} Approximate controllability, stabilizability, parabolic equation, finite dimensional control, coupled - Stokes and MHD system.\\[0.4cm]
{\bf AMS subject classifications.} 93B05, 93D15, 35Q30, 76D05, 76D07, 76D55, 93B52, 93C20.

\section{Introduction}\label{sec-intro}

In this article, we consider some spectral criteria for the approximate controllability and  for the exponential stabilizability of the linear system
\begin{equation}
y'=A y+B u,\label{eq1.1}
\end{equation}
where $A$ is the infinitesimal generator of an analytic semigroup $(e^{tA})$ on a complex
Hilbert space $H$ 
and where $B : U \to \mathcal{D}(A^*)'$ is a linear bounded operator defined on a complex Hilbert space $U$. 

A well-known criterion for approximate controllability was obtained by H. O. Fattorini in \cite{Fattorini1966} in the case $B\in \mathcal{L}(U,H)$ and without the hypothesis that $(e^{tA})$ is an analytic semigroup:
  \begin{equation}
    \forall\varepsilon\in {\cal D}(A^*), \quad \forall \lambda\in
    \mathbb{C}, \quad
A^*\varepsilon= \lambda \varepsilon \quad \text{and} \quad B^*\varepsilon=0
\quad\Longrightarrow\quad 
\varepsilon = 0.  \tag{UC}\label{UC}
  \end{equation} 
In the above criterion, $A^*$ and $B^*$ denote the adjoints of $A$ and $B$ respectively.
Condition \eqref{UC} is an infinite dimensional version of the classical Hautus test 
introduced in \cite{Hautus1969} (see \cite{RussellWeiss1994} for a stronger version for exact controllability). 
Our aim is to consider similar criteria for approximate controllability and also for stabilizability, and to consider the case $B\in \mathcal{L}(U,\mathcal{D}(A^*)')$.
(Here and in what follows, $\mathcal{L}(X,Y)$ is the set of linear bounded mappings from $X$ to $Y$.)

More precisely, let us fix $\mu_0>\sup_{j\in \mathbb{N}}\Re\lambda_j$ so that fractional powers of $(\mu_0-A^*)$ and $(\mu_0-A)$ are well defined and 
for $\alpha\in \mathbb{R}$ we introduce the spaces:
\begin{equation} \label{Halpha}
H_{\alpha}\ov \left\{
\begin{array}{ll}
\mathcal{D}((\mu_0-A)^{\alpha}) &\mbox{ if }\alpha\geq 0\\
\mathcal{D}((\mu_0-A^*)^{-\alpha})' &\mbox{ if }\alpha< 0
\end{array}
\right.\quad \mbox{ and }\quad H_{\alpha}^*\ov \left\{
\begin{array}{ll}
\mathcal{D}((\mu_0-A^*)^{\alpha}) &\mbox{ if }\alpha\geq 0\\
\mathcal{D}((\mu_0-A)^{-\alpha})' &\mbox{ if }\alpha< 0,
\end{array}
\right.
\end{equation}
where for a Hilbert space $V\subset H$ such that $V$ is dense in $H$, $V'$ stands for the dual space of $V$ with respect to the pivot space $H$.

In what follows, we assume the following hypotheses
\begin{enumerate}
\item[$(\mathcal{H}_1)$] The spectrum of $A$ consists of isolated eigenvalues $(\lambda_j)$ with 
finite  algebraic multiplicity $N_j$.
\item[$(\mathcal{H}_2)$] The family of root vectors of $A$ is complete in $H$.
\item[$(\mathcal{H}_3)$] The semigroup $(e^{tA})$ is analytic.
\item[$(\mathcal{H}_4)$] The following interpolation equalities are satisfied:
$D((\mu_0-A)^\alpha)=[H,D(A)]_{\alpha}$ for all $\alpha\in [0,1]$.
\item[$(\mathcal{H}_5)$] There exists $\gamma\in [0,1)$ such that $B\in \mathcal{L}(U,H_{-\gamma})$.
\end{enumerate}
The hypothesis $(\mathcal{H}_1)$ means that the spectrum of $A$ is composed with complex eigenvalues
$\lambda_j$ for $j\in {\mathbb N}$ and that for all $j\in {\mathbb N}$ the projection operator:
\begin{equation}\label{DefProj}
R_{-1}(\lambda_j)=\frac{1}{2\pi
  \imath}\int_{|\lambda-\lambda_j|=\delta}(\lambda-A)^{-1}{\rm d\lambda},
\end{equation}
has finite dimensional range. In \eqref{DefProj} the value $\delta>0$ is chosen small enough so that the
circle $\{\lambda \in {\mathbb C}\mid |\lambda-\lambda_j|=\delta \}$
does not enclose other point of the spectrum than $\lambda_j$. It is well-known that there exists some
$n\in {\mathbb N}^*$ such that $\mbox{Ran}(R_{-1}(\lambda_j))=\Ker(\lambda_j-A)^{n}$ (see \cite{Kato1966}) and we denote by $m(\lambda_j)\in {\mathbb N}^*$ the 
smallest of such $n$. The finite dimensional subspace $\Ker(\lambda_j-A)^{m(\lambda_j)}=\mbox{Ran}(R_{-1}(\lambda_j))$ is called the
\textit{generalized eigenspace} of $A$ associated to $\lambda_j$, its
dimension $N_j\in {\mathbb N}^*$ is called the \textit{algebraic multiplicity}
of $\lambda_j$ and an element of $\Ker(\lambda_j-A)^{m(\lambda_j)}$ is called \textit{a root vector} of $A$. We also recall that 
the subspace $\Ker(\lambda_j-A)$ is called the
\textit{(proper) eigenspace} of $A$ associated to $\lambda_j$ and its dimension
$\ell_j\in {\mathbb N}^*$ is called the \textit{geometric multiplicity} of
$\lambda_j$ (recall that $\ell_j\leq N_j$). 

Using the hypotheses $(\mathcal{H}_3)$, $(\mathcal{H}_4)$ and $(\mathcal{H}_5)$ and applying classical results on parabolic systems 
, we deduce that for any $u\in L^2((0,\tau);U)$ and for any $y_0\in H_{1/2-\gamma}$ there exists a unique solution $y$ of \eqref{eq1.1} satisfying $y(0)=y_0$ and such that
\begin{equation}\label{regregreg}
y \in H^1((0,\tau);H_{-\gamma})\cap L^2((0,\tau);H_{1-\gamma})\cap {C([0,\tau];H_{1/2-\gamma}).}
\end{equation}


  In the hypothesis $(\mathcal{H}_4)$, $[\cdot,\cdot]_\alpha$ denotes the complex interpolation method (see \cite{Triebel}). Concerning operators satisfying the above interpolation equality we refer to 
\cite[II.1.6]{RCIDS2ED} (for instance, it is satisfied if $A$ is dissipative). The hypothesis $(\mathcal{H}_4)$ is only a technical assumption that simplifies the study 
and the notation. Without such an hypothesis the space $H_{1/2-\gamma}$ should be replace by $(\mu_0-A)^\gamma ([H,D(A)]_{1/2})$.

\begin{Definition}
System \eqref{eq1.1} is said to be 
\textit{approximately controllable}
if 
for any $y_0, y_1\in H_{1/2-\gamma}$ and for any $\varepsilon>0$, there exists $u\in L^2(0,\tau;U)$ such that the solution $y$ of \eqref{eq1.1} with $y(0)=y_0$ satisfies
$\|y(\tau)-y_1\|_{H_{1/2-\gamma}}<\varepsilon$. If the above $u$ can be chosen in a $K$-dimensional subspace $U_K\subset U$ then \eqref{eq1.1} is said to be 
\textit{approximately controllable by $K$-dimensional control}.  
\end{Definition}
Since we assume $(\mathcal{H}_3)$, the time $\tau>0$ in the above definition can be chosen arbitrary.

A problem related to the approximate controllability of \eqref{eq1.1} consists in considering controls of the form
\begin{equation}
u=\sum_{l\in \mathbb{N}} u_l v_l,\label{eq1.200}
\end{equation}
where $v\ov (v_j)_{j\in {\mathbb N}}$ is a family to be
chosen and where $\bar u{=}(u_j)_{ j\in {\mathbb N}}$ is only
depending on time and is the new control. Here we are
looking for families $v\ov (v_j)_{j\in {\mathbb N}}\in \ell^2(\mathbb{N};U)$
such that the corresponding system is still approximately
controllable.

Let us introduce for all $v\in \ell^2({\mathbb N},U)$ the operator
\begin{equation}
V(v): \ell^2({\mathbb N}) \longrightarrow H_{-\gamma},\quad  V(v) \bar
w\overset{ \mbox{\tiny def}}{=}\sum_{l\in {\mathbb N}} w_l B v_l
\quad\quad\mbox{ where }\quad \bar w=(w_l)_{l\in {\mathbb
    N}}.\label{eq1.2} 
\end{equation}
Here $\ell^2({\mathbb N})\ov \ell^2({\mathbb N},\mathbb{C})$ denotes the space of square-summable complex sequences. 
The system considered here and which is deduced from \eqref{eq1.1} can
be written as
\begin{equation}
y'+A y=V(v) \bar u.\label{eq1.4}
\end{equation}

\begin{Definition}\label{admiss-AC}
We say that $v$ is \textit{admissible} if the system
\eqref{eq1.4} is approximately controllable.
\end{Definition} 

A particular important case of the above class of problem is when the family $v$ is finite i.e. $v=(v_j)_{j=1,\dots,K}\in U^K$ for some integer $K$. Obviously, if there exists such a finite admissible family then \eqref{eq1.1} is approximately controllable by $K$-dimensional control.

Let us  consider $\varepsilon_k(\overline{\lambda}_j)$, $k=1,\dots,\ell_j$, a basis of $\Ker(\overline{\lambda}_j-A^*)$ and let us define
$$
\mathcal{E} \ov \overline{\bigoplus_{j\in \mathbb{N}}\Ker(\overline{\lambda}_j-A^*)}^{_{H_\gamma^*}}=\overline{{\rm span}_{\mathbb{C}}\,\{\varepsilon_k(\overline{\lambda}_j)\mid j\in  \mathbb{N},\;k=1,\dots,\ell_j\}}^{_{H_\gamma^*}}.
$$
Moreover, for a given family $v=(v_j)_{j\in \mathbb{N}}\in \ell^2(\mathbb{N};U)$ we introduce the bounded linear operator
\begin{equation}
  \label{Wj2}
  W_j(v) : \mathbb{C}^{\ell_j} \to \ell^2(\mathbb{N}), \quad x \mapsto
  \left(\sum_{k=1}^{\ell_j} x_k (v_l | B^*\varepsilon_k(\bar \lambda_j))_U\right)_{l\in \mathbb{N}},
\end{equation}
and for a finite family $v=(v_j)_{j=1,\dots,K}$ of $U^K$ we use the same notation for the following matrix of order $K\times \ell_j$:
\begin{equation} \label{Wj1}
W_j(v)\ov \bigg{(}(v_l \mid B^*
\varepsilon_k(\bar\lambda_j))_U\bigg{)}_{1\leq
  l\leq K,\, 1\leq k\leq \ell_j}.
\end{equation}
We are now in position to state the result on approximate controllability.
\begin{Theorem}\label{MainTheorem}
Assume $(\mathcal{H}_1)$, $(\mathcal{H}_2)$, $(\mathcal{H}_3)$, $(\mathcal{H}_4)$  and $(\mathcal{H}_5)$ and let $K\in \mathbb{N}^*$. Then the following results hold.
\begin{enumerate} 
\item System \eqref{eq1.1} is approximately controllable if 
and only if \eqref{UC} is satisfied. 
\item System \eqref{eq1.1} is approximately
controllable by a $K$-dimensional control if and only if \eqref{UC} is satisfied and 
\begin{equation}
\sup_{j\in \mathbb{N}}\ell_j\leq K.\label{CondK}
\end{equation}
\item A family $v$ is admissible in the sense of Definition \ref{admiss-AC} if and only if
\begin{equation}\label{RankCond0}
\rank W_j(v)=\ell_j\quad \forall j\in \mathbb{N}.
\end{equation}
\item Assume that \eqref{UC} is true. Then the set of admissible families of $\ell^2(\mathbb{N},U)$ forms a residual 
set of $\ell^2(\mathbb{N},U)$. Moreover, 
if $v\in \ell^2(\mathbb{N},U)$ is admissible then its orthogonal projection onto $\ell^2(\mathbb{N},B^*\mathcal{E})$ is admissible.
\item Assume that \eqref{UC} and \eqref{CondK} are true. Then the set of admissible families of $U^K$ forms a residual set of $U^K$. Moreover, if $v\in U^K$ is admissible then its orthogonal projection onto $(B^*\mathcal{E})^K$ is admissible.
\end{enumerate}
\end{Theorem}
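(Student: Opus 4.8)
The plan is to establish the first assertion by a duality argument combined with a spectral expansion, and then to deduce the remaining assertions by applying the first one to the reduced system \eqref{eq1.4} and by a Baire category argument. First I would prove Assertion~1. By the well-posedness \eqref{regregreg}, the input-to-final-state operator
\[
R_\tau : L^2(0,\tau;U)\to H_{1/2-\gamma},\qquad R_\tau u=\int_0^\tau e^{(\tau-s)A}Bu(s)\,ds,
\]
is bounded, and approximate controllability is equivalent to $\overline{\Ran R_\tau}=H_{1/2-\gamma}$, hence to $\Ker R_\tau^*=\{0\}$. The analyticity $(\mathcal{H}_3)$ together with $(\mathcal{H}_5)$ (with $\gamma<1$) makes $t\mapsto B^*e^{tA^*}\varphi$ a well-defined element of $U$ with an integrable $t^{-\gamma}$ singularity at $t=0$, and a direct computation gives $(R_\tau^*\varphi)(s)=B^*e^{(\tau-s)A^*}\varphi$. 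Therefore approximate controllability is equivalent to the unique continuation property that $B^*e^{tA^*}\varphi=0$ for all $t\in(0,\tau)$ implies $\varphi=0$.

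Then I would show this property is equivalent to \eqref{UC}. By $(\mathcal{H}_3)$ the map $t\mapsto B^*e^{tA^*}\varphi$ is analytic on $(0,\infty)$, so vanishing on $(0,\tau)$ forces vanishing for all $t>0$. Writing $A^*=\overline{\lambda}_j+N_j$ with $N_j$ nilpotent on each generalized eigenspace $\Ran R_{-1}(\lambda_j)^*$ and setting $P_j^*=R_{-1}(\lambda_j)^*$, one expands
\[
B^*e^{tA^*}\varphi=\sum_{j}e^{\overline{\lambda}_j t}\sum_{k\geq0}\frac{t^k}{k!}\,B^*N_j^kP_j^*\varphi .
\]
The linear independence of the functions $t^k e^{\overline{\lambda}_j t}$ forces $B^*N_j^kP_j^*\varphi=0$ for all $j,k$; if $P_j^*\varphi\neq0$, choosing the largest $r$ with $N_j^rP_j^*\varphi\neq0$ produces a nonzero eigenvector of $A^*$ annihilated by $B^*$, contradicting \eqref{UC}. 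Hence $P_j^*\varphi=0$ for every $j$, and $(\mathcal{H}_2)$ yields $\varphi=0$. The converse is immediate, since any eigenvector $\varepsilon$ with $B^*\varepsilon=0$ lies in $\Ker R_\tau^*$. This proves Assertion~1.

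Next, Assertion~3 follows by applying Assertion~1 to \eqref{eq1.4}, whose control operator $V(v)\in\mathcal{L}(\ell^2(\mathbb{N}),H_{-\gamma})$ still satisfies $(\mathcal{H}_5)$. Since the vanishing of $V(v)^*\varepsilon$ is equivalent to $(v_l\mid B^*\varepsilon)_U=0$ for all $l$, criterion \eqref{UC} for \eqref{eq1.4} reads: for every $j$ and every $\varepsilon=\sum_k x_k\varepsilon_k(\overline{\lambda}_j)\in\Ker(\overline{\lambda}_j-A^*)$, one has $W_j(v)x=0\Rightarrow x=0$, that is \eqref{RankCond0}. For Assertion~2, the necessity of \eqref{UC} is Assertion~1, and an admissible $v\in U^K$ yields matrices $W_j(v)$ of size $K\times\ell_j$ and full rank $\ell_j$, forcing \eqref{CondK}; the converse implication follows from the nonemptiness obtained in Assertion~5.

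Finally, for Assertions~4 and~5 I would run a Baire category argument on the sets $\mathcal{A}_j\ov\{v:\rank W_j(v)=\ell_j\}$. Each $\mathcal{A}_j$ is open because $v\mapsto W_j(v)$ is continuous and injectivity of an operator with fixed finite-dimensional domain $\mathbb{C}^{\ell_j}$ is an open condition. Density is where \eqref{UC} is used decisively: it forces $\{B^*\varepsilon_k(\overline{\lambda}_j)\}_{k=1}^{\ell_j}$ to be linearly independent in $U$, so any $v$ can be perturbed by an arbitrarily small amount on finitely many components to make the relevant $\ell_j\times\ell_j$ minor of $W_j(v)$ invertible (in the finite family case this requires exactly $\ell_j\leq K$). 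Then $\bigcap_j\mathcal{A}_j$ is residual in $\ell^2(\mathbb{N},U)$ (resp.\ $U^K$), hence dense and nonempty, which supplies the missing implication of Assertion~2. The projection statements are immediate, since $W_j(v)$ depends on $v$ only through the inner products $(v_l\mid B^*\varepsilon_k(\overline{\lambda}_j))_U$ with $B^*\varepsilon_k(\overline{\lambda}_j)\in B^*\mathcal{E}$, so projecting $v$ orthogonally onto $\ell^2(\mathbb{N},B^*\mathcal{E})$ (resp.\ $(B^*\mathcal{E})^K$) leaves every $W_j(v)$ unchanged. The main obstacle is the rigorous justification of the duality formula for $R_\tau^*$ and of the generalized-eigenspace expansion of $B^*e^{tA^*}\varphi$ in the unbounded-$B$, non-self-adjoint setting, namely that $B^*e^{tA^*}\varphi$ is a well-defined $U$-valued analytic function with integrable singularity and that the series may be separated term by term; the category arguments are comparatively routine.
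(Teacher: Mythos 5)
Your overall architecture (duality for Assertion~1, reduction to system \eqref{eq1.4} for Assertion~3, a Baire category argument resting on the linear independence of $\{B^*\varepsilon_k(\overline{\lambda}_j)\}_{k=1}^{\ell_j}$ for Assertions~4 and~5, and the combination of these for Assertion~2) coincides with the paper's, and those parts are sound. The genuine gap is at the heart of Assertion~1: the expansion
\[
B^*e^{tA^*}\varphi=\sum_{j}e^{\overline{\lambda}_j t}\sum_{k\geq0}\frac{t^k}{k!}\,B^*N_j^kP_j^*\varphi
\]
is not available under the stated hypotheses. Completeness of the root vectors $(\mathcal{H}_2)$ only says that finite linear combinations of root vectors are dense in $H$; it does not provide a convergent spectral expansion of $e^{tA^*}\varphi$, since the root vectors need not form a Riesz basis --- the introduction stresses that this is precisely the generality the theorem is meant to cover. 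Consequently the sum over $j$ need not converge, and ``linear independence of the functions $t^ke^{\overline{\lambda}_jt}$'' cannot be invoked to separate the terms of an infinite, possibly divergent, series. You flag this yourself as ``the main obstacle,'' but that obstacle is exactly the point to be overcome, and your argument does not overcome it.

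The paper's route avoids the global expansion entirely: from $B^*e^{tA^*}\varepsilon\equiv 0$ it takes the Laplace transform to get $B^*(\lambda-A^*)^{-1}\varepsilon=0$ for $\Re\lambda$ large, extends this to all of $\rho(A^*)$ by analytic continuation, and then extracts the information one eigenvalue at a time through the contour integrals $R^*_{-n}(\overline{\lambda}_j)=\frac{1}{2\pi\imath}\int_{|\lambda-\overline{\lambda}_j|=\delta}(\lambda-\overline{\lambda}_j)^{n-1}(\lambda-A^*)^{-1}\,{\rm d}\lambda$, obtaining $B^*R^*_{-n}(\overline{\lambda}_j)\varepsilon=0$ for $1\leq n\leq m(\lambda_j)$. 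Since $R^*_{-n}(\overline{\lambda}_j)=(A^*-\overline{\lambda}_j)^{n-1}R^*_{-1}(\overline{\lambda}_j)$ is, up to sign, exactly your $N_j^{n-1}P_j^*$, the downward induction on $n$ (equivalently, your ``largest $r$'' argument) then goes through verbatim and yields $R^*_{-1}(\overline{\lambda}_j)\varepsilon=0$ for every $j$, after which $(\mathcal{H}_2)$ finishes. In short, the fix is a localization in the resolvent variable rather than a term-by-term separation in time; with that substitution your proof becomes the paper's.
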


Recall that a residual set of a topological space $X$ is the intersection of countable open and dense subsets of X.
In particular, it is a dense subset of $X$ if $X$ is a Banach space.

\begin{Remark}\label{Rem1}
Let us make some remarks on the above result:
the point 1. in the above theorem is the Fattorini result in the case $B\in \mathcal{L}(U,H_{-\gamma})$. Since we assume $(\mathcal{H}_3)$, even for $\gamma=0$, there is a small change: the set $\left\{y(\tau) \ ; \ u\in L^2((0,\tau);U)\right\}$ is dense in $H_{1/2}$ which implies that it is also dense in $H$.

To be more precise, the point 1. in the above theorem  (the Fattorini theorem) does not need the hypotheses $(\mathcal{H}_3)$, but here we are mainly interested in parabolic systems and with the possibility to use only finite dimensional controls. Without the hypothesis $(\mathcal{H}_3)$, one has to replace $H_{1/2-\gamma}$ by $H_{-\beta}$, {with suitable $\beta \leq \gamma$ such that $y\in C([0,\tau];H_{-\beta})$,} and the definition of approximate controllability has to be relaxed as follows: 
system \eqref{eq1.1} is said to be \textit{approximately controllable} if $\cup_{\tau>0}\Ran\Phi_\tau$ is dense in $H_{-\beta}$ where 
$$
\Phi_\tau (u)\ov \int_0^\tau e^{A(\tau-s)}Bu(s) \ {\rm d}s.
$$
It can be checked that system \eqref{eq1.1} is approximately controllable, if and only if, for any $y_1\in H_{-\beta}$ and for any
$\varepsilon>0$, there exists $u\in L^2(0,+\infty;U)$ such that the
solution $y$ of \eqref{eq1.1} with $y(0)=0$ satisfies $\|y(\tau)-y_1\|_{H_{-\beta}}<\varepsilon$ for some $\tau$, depending in general on $y_1$ and $\varepsilon$.
\end{Remark}

The condition \eqref{UC} is also a criterion for stabilizability property. We say that $(A, B)$ is  
\textit{stabilizable} if there exist $F\in \mathcal{L}(H,U)$ and constants 
$C>0$, $\epsilon>0$ such that the solutions of \eqref{eq1.1} with $u=Fy$ obey $\|y(t)\|_H\leq C e^{-\epsilon t}\|y(0)\|_H$ for all $t\geq 0$. Note that for a given $\sigma>0$ 
it is easily seen 
that $(A+\sigma, B)$ is stabilizable if and only if there exist $F\in\mathcal{L}(H,U)$ and $C>0$, $\epsilon>0$ such that the solutions of \eqref{eq1.1} with $u=Fy$ satisfy
\begin{equation}
\|y(t)\|_H\leq C e^{-(\sigma+\epsilon) t}\|y(0)\|_H\quad (t\geq 0).\label{EstStab}
\end{equation}
\begin{Definition}\label{admiss-ES}
We say that $(A, B)$ is \textit{stabilizable by $K$-dimensional control} if there exists a $K$-dimensional subspace $U_K\subset U$  
such that $(A,B|_{U_K})$ is stabilizable. A linearly independent family $(v_j)_{j=1,\dots,K}$ of $U^K$ 
generating such a $U_K$ (i.e. $U_K=\Span_{\mathbb{C}}\{v_j,j=1,\dots,K\}$) is said to be \textit{admissible for stabilizability of $(A,B)$}.
\end{Definition}
If $(v_j)_{j=1,\dots,K}$ of $U^K$  is admissible for stabilizability, then the stabilizing finite rank feedback law $F\in \mathcal{L}(H,U)$ can be represented 
  as follows:
\begin{equation}\label{FeedbackSum}
F y(t)= \sum_{j=1}^K (y(t), \widehat \varepsilon_j)_H v_j.
\end{equation}

Note that above definitions are consistent since, by combining $(\mathcal{H}_3)$, $(\mathcal{H}_5)$,   and $F\in\mathcal{L}(H,U)$ with a perturbation argument, we have that $A+BF$ generates an analytic semigroup on $H$ and in particular that $y(t)\in C([0,\infty);H)$.

In the case of stabilization, the condition \eqref{UC} is replaced by
\begin{equation}
    \forall\varepsilon\in {\cal D}(A^*), \quad \forall \lambda\in
    \mathbb{C},\; \Re\lambda\geq -\sigma\quad
A^*\varepsilon= \lambda \varepsilon \quad \text{and} \quad B^*\varepsilon=0
\quad\Longrightarrow\quad 
\varepsilon = 0.  \tag{$\text{UC}_{\sigma}$}\label{UCstab}
  \end{equation}
The space $\mathcal{E}$ used in Theorem \ref{MainTheorem} can be replaced by the following space depending on  $\sigma>0$:
\begin{equation}
\mathcal{E}_\sigma \ov \bigoplus_{\Re\lambda_j\geq -\sigma}\Ker(\overline{\lambda}_j-A^*)=\Span_{\mathbb{C}}\{\varepsilon_k(\overline{\lambda}_j)\mid \Re\lambda_j\geq -\sigma,\;k=1,\dots,\ell_j\}.\label{DefEsigma}
\end{equation}
  
We also consider the hypothesis that completes $(\mathcal{H}_1)$
\begin{enumerate}
\item[$(\mathcal{H}_6)$] The spectrum of $A$ has no finite {cluster point}. 
\end{enumerate}
Note that a sufficient condition for $(\mathcal{H}_6)$ is that $A$ has compact resolvent.

The main result for the stabilization of \eqref{eq1.1} is
\begin{Theorem}\label{CorStab}
Assume $(\mathcal{H}_1)$, $(\mathcal{H}_3)$, $(\mathcal{H}_5)$ and $(\mathcal{H}_6)$. 
Let $K\in \mathbb{N}^*$ and $\sigma>0$. Then the following results hold.
\begin{enumerate}
\item The pair $(A+\sigma, B)$ is stabilizable if and only if \eqref{UCstab} is satisfied. 
\item The pair $(A+\sigma, B)$ is stabilizable by a $K$-dimensional control if and only if \eqref{UCstab} is satisfied and
\begin{equation}
\sup_{\Re \lambda_j\geq -\sigma }\ell_j\leq K.\label{CondKsigma}
 \end{equation}
\item A family $v=(v_j)_{j=1,\dots,K}$ of $U^K$ is admissible for stabilizability  of $(A+\sigma, B)$ (in the sense of Definition \ref{admiss-ES}) if and only if
\begin{equation}
\rank W_j(v)=\ell_j\quad \forall \lambda_j,\;  \Re\lambda_j\geq -\sigma.\label{CondWk}
\end{equation}
\item Assume that \eqref{UCstab} and \eqref{CondKsigma} are true. Then the set of admissible families for stabilizability  of $(A+\sigma, B)$ forms a residual set of $U^K$. 
Moreover, if $v$ is admissible for stabilizability  of $(A+\sigma, B)$ then its orthogonal projection onto $(B^*\mathcal{E}_\sigma)^K$ is admissible for stabilizability
 of $(A+\sigma, B)$.
\end{enumerate}
 \end{Theorem}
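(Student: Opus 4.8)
The plan is to reduce the stabilization problem to a finite-dimensional one by a spectral splitting, and then to transcribe the Hautus test and the genericity arguments of Theorem \ref{MainTheorem} to the finite-dimensional setting.

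First I would use $(\mathcal{H}_3)$ and $(\mathcal{H}_6)$ to prove that $\Lambda_\sigma \ov \{\lambda_j : \Re\lambda_j \geq -\sigma\}$ is finite. Analyticity forces the spectrum into a sector with $\Re\lambda_j \to -\infty$, and the absence of a finite cluster point $(\mathcal{H}_6)$ then leaves only finitely many eigenvalues in the half-plane $\{\Re\lambda \geq -\sigma\}$. Setting $P_\sigma \ov \sum_{\lambda_j \in \Lambda_\sigma} R_{-1}(\lambda_j)$ (cf.\ \eqref{DefProj}), I obtain a finite-rank spectral projection that commutes with $A$ and $(e^{tA})$ and splits $H = H_u \oplus H_s$, with $H_u \ov \Ran P_\sigma$ finite-dimensional (spanned by the root vectors attached to $\Lambda_\sigma$) and $H_s \ov \Ker P_\sigma$. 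Since $H_u$ consists of analytic vectors, $P_\sigma$ extends continuously to $H_{-\gamma}$, so that $B_u \ov P_\sigma B \in \mathcal{L}(U, H_u)$ is a well-defined finite-rank operator. On $H_s$ the spectral bound of $A|_{H_s}$ is $< -\sigma$; because analytic semigroups satisfy the spectrum-determined growth condition, $\|e^{tA}|_{H_s}\|_{\mathcal{L}(H_s)} \leq C e^{-(\sigma+\epsilon)t}$ for some $\epsilon > 0$, and no control is needed on the stable part.

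This reduces matters to the finite-dimensional pair $(A_u, B_u)$ with $A_u \ov A|_{H_u}$. For point 1, a feedback $u = Fy$ acting only through $H_u$ yields \eqref{EstStab} exactly when the spectrum of $A_u + B_u F_u$ can be placed in $\{\Re\lambda < -\sigma\}$; since every eigenvalue of $A_u$ lies in $\{\Re\lambda \geq -\sigma\}$, this demands full controllability of $(A_u, B_u)$, equivalent by the dual Hautus test to the nonexistence of a left eigenvector $\varepsilon$ with $A_u^*\varepsilon = \lambda\varepsilon$ and $B_u^*\varepsilon = 0$. Identifying these with the eigenvectors $\varepsilon_k(\bar\lambda_j)$ of $A^*$ and using $B_u^*\varepsilon = B^*\varepsilon$ on $\Ran P_\sigma^*$, this is precisely \eqref{UCstab}. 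For point 2, the rank form of the Hautus test at $\lambda_j$ forces $\rank B_u|_{U_K} \geq \dim\Ker(\lambda_j - A_u) = \ell_j$, hence $K \geq \sup_{\lambda_j \in \Lambda_\sigma}\ell_j$; conversely \eqref{UCstab} together with \eqref{CondKsigma} permits a generic family of $K$ control directions to render $(A_u, B_u|_{U_K})$ controllable, and pole placement then produces a finite-rank $F$ of the form \eqref{FeedbackSum}. Point 3 follows by spelling out this Hautus condition for the family $v$: writing $\varepsilon = \sum_k x_k \varepsilon_k(\bar\lambda_j)$, the obstruction $(v_l \mid B^*\varepsilon)_U = 0$ for all $l$ reads $W_j(v)\, x = 0$ (see \eqref{Wj1}), so admissibility is the absence of nonzero $x$, i.e.\ $\rank W_j(v) = \ell_j$, which is \eqref{CondWk}.

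Finally, for point 4 I would note that, $\Lambda_\sigma$ being finite, admissibility amounts to finitely many conditions $\rank W_j(v) = \ell_j$. Under \eqref{UCstab} the vectors $\{B^*\varepsilon_k(\bar\lambda_j)\}_{k=1}^{\ell_j}$ are linearly independent in $U$, so each condition says that some $\ell_j \times \ell_j$ minor of $W_j(v)$ is nonzero — an open and dense constraint on $U^K$ — and a finite intersection of open dense sets is residual. The projection claim is immediate because $B^*\varepsilon_k(\bar\lambda_j) \in B^*\mathcal{E}_\sigma$ for every relevant $j,k$, so each entry $(v_l \mid B^*\varepsilon_k(\bar\lambda_j))_U$ is unchanged when $v_l$ is replaced by its orthogonal projection onto $B^*\mathcal{E}_\sigma$ (see \eqref{DefEsigma}); thus $W_j(v)$ and hence admissibility are preserved. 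The main obstacle I anticipate is not the algebra but the analytic bookkeeping: verifying that the finite-rank feedback built from the finite-dimensional problem, together with the unbounded $B$ allowed by $(\mathcal{H}_5)$, yields a generator $A+BF$ of an analytic semigroup on all of $H$ whose decay on $H_s$ and prescribed decay on $H_u$ combine into the global estimate \eqref{EstStab} — this requires a perturbation argument controlling the feedback-induced coupling between $H_u$ and $H_s$.
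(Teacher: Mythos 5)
Your proposal follows essentially the same route as the paper: finitely many eigenvalues in $\{\Re\lambda\geq-\sigma\}$ from $(\mathcal{H}_3)$ and $(\mathcal{H}_6)$, a spectral projection onto the finite-dimensional unstable subspace, the Hautus/Fattorini test there (the paper simply invokes Theorem \ref{MainTheorem} for the projected system, and gets necessity from the fact that an unstable mode with $B^*\varepsilon(\overline{\lambda}_j)=0$ evolves as $e^{\lambda_j t}$ independently of \emph{any} control), and the rank, genericity and projection claims transcribed mode by mode. The one step you defer is closed in the paper not by a perturbation argument controlling a two-way coupling but by noting the coupling is one-way --- the feedback $u=F_N P_N y$ depends only on the unstable component, which becomes an autonomous exponentially decaying finite-dimensional system, so the stable component obeys a Duhamel formula whose forcing decays like $e^{-(\sigma+\epsilon)t}$, and the convolution with the analytic-semigroup bound $\|(\lambda_0-A_N^-)^{\gamma}e^{A_N^-(t-s)}\|\leq C(t-s)^{-\gamma}e^{-(\sigma+\epsilon')(t-s)}$ (the factor $(t-s)^{-\gamma}$ being the price of the unbounded $B$ in $(\mathcal{H}_5)$) yields \eqref{EstStab} once $\epsilon<\epsilon'$.
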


\begin{Remark}\label{Rk1}
Using the above result, one can deduce the stabilization of a class of nonlinear parabolic systems (see Theorem \ref{ThmNL} and Theorem \ref{ThmNL2}).

Let us also underline that in many practical examples, the system \eqref{eq1.1} is originally defined on a real Hilbert space for control functions with values in a real control space.
Thus above approximate controllability and stabilization results are also stated in the real case (see Theorem \ref{MainTheoremReal} and Theorem \ref{CorStabReal} below). {We emphasize that, even in the real case, the minimal value of $K$ is the maximum of the geometric multiplicities $\ell_j$. It is independent of algebraic multiplicities and of real or complex nature of the eigenvalues as it is suggested in \cite[Rem. 3.9]{B-T-2004}.}
\end{Remark}
\begin{Remark} 
The stabilizing feedback control $u=F y$ given by Theorem \ref{CorStab} is constructed from a finite dimensional projected system and, as a consequence, it is infinitely time differentiable: 
\begin{equation}\label{TimeRegCont}
\quad \forall n\in \mathbb{N},\quad u=F y\in H^n(U).
\end{equation}
Actually, the construction given by Subsection \ref{subsect3.1} shows that in \eqref{FeedbackSum} the family $\{\widehat \varepsilon_j\mid j=1,\dots, K\}$ can be chosen in the set $\mathcal{E}_\sigma$ defined by \eqref{DefEsigma}. 
Then $F$ can be extended as an operator from $[D((A^*)^n)]'$ onto $U$ for all $n\in \mathbb{N}$. In particular, from $F\in \mathcal{L}([D(A^*)]',U)$ and $y'\in L^2([D(A^*)]')$ we first deduce that $u=F y$ belongs to $H^1(U)$, and thus \eqref{TimeRegCont} follows from an inductive argument  by successively differentiating equation \eqref{eq1.1}.
\end{Remark}
Using the Theorem \ref{CorStab} and Remark \ref{Rk1}, we can deal with several nonlinear parabolic real systems such as systems of heat equations or Navier--Stokes type systems. For instance, we show in Section \ref{sect5} how to obtain the local stabilizability around a stationary state $(w^S,\theta^S)$ of the magnetohydrodynamic (MHD) system
\begin{equation}\label{MHDIntro}
 \left\{
\begin{split}
 w_t-\Delta w+(w\cdot \nabla) w-({\rm curl\, \theta})\times \theta+\nabla r&=f^S+{\bf 1}_\omega u^1 &\quad \mbox{ in }Q,\\
 \theta_t+{\rm curl}({\rm curl}\,\theta)-{\rm curl}(w\times \theta) &={\bf 1}_\omega \Tilde u^2 &\quad \mbox{ in }Q,\\
{\rm div}\, w={\rm div}\, \theta&=0&\quad \mbox{ in }Q,\\
w=w^S,\quad \theta\cdot n&=\theta^S\cdot n&\quad \mbox{ on }\Sigma,\\
({\rm curl}\,\theta-w\times \theta)\times n&=({\rm curl}\,\theta^S-w^S\times \theta^S)\times n&\quad \mbox{ on }\Sigma,\\
w(0)=w^S+y_0,\quad \theta(0)&=\theta^S+\vartheta_0&\quad \mbox{ in }\Omega,
\end{split}
\right.
\end{equation}
{with a control $\Tilde u^2$ in the magnetic field equation that satisfies:
\begin{equation}\label{Propu2}
\begin{split}
{\rm div\,}({\bf 1}_\omega \Tilde u^2)&=0\; \mbox{ in }\; Q,\\
({\bf 1}_\omega \Tilde u^2)\cdot n&=0\; \mbox{ on }\; \Sigma.
\end{split}
\end{equation}}
In above settings, $\Omega$ is a bounded and simply connected domain of $\mathbb{R}^3$, $\omega$ is a nonempty open subset of $\Omega$, $Q\ov (0,+\infty)\times \Omega$, $\Sigma\ov (0,+\infty)\times  \partial\Omega$, ${\bf 1}_\omega$ is the extension operator defined on $(L^2(\omega))^3$ 
by ${\bf 1}_\omega(y)(x)=y(x)$ if $x\in \omega$ and ${\bf 1}_\omega(y)(x)=0$ else, and $u^1, \Tilde u^2$ are control functions in 
$(L^2((0,T)\times \omega;\mathbb{R}))^3$. {This result extends (and also improves) the 2d stabilization result \cite{Lefter2010}. In Section \ref{sect5}, we also prove a boundary stabilization result for a nonlinear micropolar fluid system for which controllability issue has been considered in \cite{Guerrero2007}. Those stabilization results are both based on the proof of the unique continuation property  \eqref{UC} and require the use of local Carleman inequalities for coupled Stokes-type equations. These inequalities are proved in the Appendix.}

\medskip
\par The first works dealing with approximate controllability of infinite dimensional linear systems are due to 
H. O. Fattorini in the pioneer papers \cite{Fattorini1966, Fattorini1967}. While \cite{Fattorini1967} focus on the particular case of self-adjoint generators, general semigroup 
generators are considered in \cite{Fattorini1966} where the above mentioned infinite dimensional Hautus test \eqref{UC} is proposed. 
Very surprisingly, the last quoted work has been published three years earlier than M. L. J. Hautus famous paper \cite{Hautus1969}. 
It is also suggested in \cite{Fattorini1966} that one can find a finite rank input operator such that approximate 
controllability holds provided that the rank is greater or equal than the maximum of the geometric multiplicities of $A$. However, nothing is said about the way of 
constructing such a finite rank input operator. 
Since Fattorini's paper is almost 50 years old, it is of course expected that there has been additional work on the subject, dealing in particular with finite dimensional controls. However, most of the results available in the literature concern self-adjoint or diagonal operators and are stated in a much less general way than Fattorini's result. For instance, in \cite{Triggiani1976} Triggiani provide a generalization of the Kalman rank condition which reduced to \eqref{RankCond0} in the case of normal operator with compact resolvent, and permits to characterize admissible finite dimensional control subspaces (see \cite[eq.(3.5)]{Triggiani1976}). About stabilizability problems, the idea of using finite dimensional feedback law stabilizing linear parabolic systems goes back to \cite{Triggiani1975, Triggiani1980} and the rank conditions \eqref{CondWk} arise as a stabilizability criterion in the case of self-adjoint operators with compact resolvent (see \cite[eq.(7.2)]{Triggiani1975}). This spectral rank criterion can also be found in the modern literature but still for self-adjoint, or at least diagonal, operators. 
We refer for instance to the book of Curtain and Zwart where the rank condition is stated for self-adjoint or diagonal operator having simple spectrum as well as a Riesz basis of eigenvectors (see \cite[Thm. 4.2.1 and Thm. 4.3.2]{CurtainZwart1995}), or to the book of Bensoussan, Da Prato, Delfour and Mitter still for diagonal operators (see \cite[Thm. 4.2]{RCIDS2ED}). However, in numerous applications the operator $A$ may not be diagonal or may not have a Riesz basis (for a simple example of non selfadjoint operator whose eigenvectors are complete but do not form a Riesz basis see for instance \cite{Davies1999}). As a consequence, we present here the finite dimensional control version of Fattorini's theorem written in full generality: see Theorem \ref{MainTheorem} and Remark \ref{Rem1}. 


 \medskip
\par The second reason motivating the writing down of Theorem \ref{MainTheorem} is the connected stabilization issues that are stated in Theorem \ref{CorStab}. Indeed, we must underline that the fact that \eqref{UCstab} (or its rank conditions version \eqref{CondWk}) can be used as a general criterion for the stabilizability of parabolic systems went unnoticed in many recent works about stabilization of fluid-flow systems. We can quote for instance the papers of Barbu, Lasiecka and Triggiani \cite{B-L-T-2006, BarbuLasieckaTriggiani2004, B-L-T-2007} where, to obtain the stabilizability of the linearized Navier-Stokes equations, the authors make the useless additional assumption that the projected operator $A_N$ is diagonal (and even postulate that it is also necessary for exact controllability of the projected system, see \cite[Rem. 3.6.2 and Rem. 3.6.4]{BarbuLasieckaTriggiani2004}). Or the work of Raymond and Thevenet \cite{RaymondThevenet2009} where the exact controllability of the finite dimensional unstable projected system is deduced  from the stabilizability by infinite dimensional control proved in \cite{Raymond_2005_2}, itself relying on the null controllability of the Oseen equations \cite{exactcontollFGIP}. We must also mention the series of works of Fursikov \cite{Fursikov2001-0, Fursikov2001, Fursikov2004} where stabilization of the Oseen equations and of the Navier-Stokes equations is achieved through a subtle combination of an extension of the domain procedure with a spectral decomposition. Up to our knowledge, it is the first work that reduces the PDE stabilizability question to a unique continuation property on an eigenvalue problem (see Fabre and Lebeau's type theorem proved in \cite[Thm 4.2]{Fursikov2004}), and for a non necessarily diagonal underlying linear operator. There, $B^*$ correspond to an internal observation and this permits to deduce the linear independence 
of the whole family $\{B^*\varepsilon_k(\lambda_j)\mid \Re\lambda_j\geq -\sigma,\;k=1,\dots,\ell_j \}$ which is at the basis of the construction of a key extension operator. Note that such a linear independence property is also used in \cite{B-T-2004} to construct finite dimensional distributed control. However, it is stronger than \eqref{UCstab} and proper to internal observation since linear independence of $\{B^*\varepsilon_k(\lambda_j)\mid \Re\lambda_j\geq -\sigma,\;k=1,\dots,\ell_j \}$ is false for boundary control in some simple geometrical situations (see the end of the introduction of \cite{BT}). Note also that in the above quoted works the question of finding the minimal number $K$ for which the stabilizability holds is not addressed or only partially.

The major interest of the Fattorini criterion is that proving \eqref{UC} is an easy alternative 
to obtain the finite cost condition needed to construct stabilizing feedback 
law from well-posed optimal quadratic problem, see \cite[Remark 2]{BADRA-DCDS-A2011}. For instance, boundary feedback stabilizability of the 
Navier-Stokes system can be simply reduced to a uniqueness result of type \cite{FL1996} and avoid more 
sophisticate approaches for the nonstationary system as in \cite{Imanuvilov2001, exactcontollFGIP} or \cite{Fursikov2001, Fursikov2004}. 
Moreover, a systematic generalization of the last quoted works to other analogous more complex systems 
such as coupled Stokes type systems or fluid--structure systems is not straightforward. Here, with two examples of flow systems described by coupled Stokes type equations,
we sketch a systematic procedure relying on \eqref{UC} for checking the stabilizability of nonlinear system. 
We prove local Carleman estimates that permit to check \eqref{UC} for general coupled Navier-Stokes systems and we deduce feedback and dynamical stabilization of nonlinear  MHD and micropolar systems
(see Corollary \ref{CarlLocalStokes}, Theorem \ref{StabMHD2D} and Theorem \ref{StabMicro3D}). Concerning the use of Fattorini's criterion for feedback stabilization of 
fluid--structure system we refer to \cite{BT3,BT4}. Finally, let us underline that a stabilizability property can also be used to tackle some controllability issues: 
we refer for instance to the work \cite{CoronTrelat2004} where the authors prove the global controllability  to steady trajectories of a 1d nonlinear heat equation by using a stabilization procedure involving only the unstable modes.

\medskip
\par The rest of the paper is organized as follows. Section \ref{sect2} is dedicated to approximate controllability issue. We prove Theorem \ref{MainTheorem} in subsection \ref{subsect2.1}, we make some comments in subsection \ref{subsect2.2} and subsection \ref{subsect2.3} is dedicated to the real version of Theorem \ref{MainTheorem}. Thus, we consider a simple example involving a system of coupled heat equations in subsection \ref{subsect2.4}.
Section \ref{sectstab0} is dedicated to stabilizability issue. We prove Theorem \ref{CorStab} in subsection \ref{subsect3.1}, subsection \ref{subsect3.2} is dedicated to nonlinear stabilization theorems and subsection \ref{subsect3.3} concerns the real case. 
Thus, as a simple example of application, we consider the minimal rank stabilizing feedback law issue for the heat equation in a rectangular domain. Section  \ref{sect5} is concerned with two applications to the stabilization of incompressible coupled Navier-Stokes type systems. Subsection \ref{stabMHD} is concerned with the MHD system \eqref{MHDIntro} and Subsection \ref{stabMicro} deals with the boundary feedback stabilizability of a micropolar fluid system. Finally, the appendix is dedicated to the proof of local Carleman inequalities for the Stokes equations and on their applications to obtain uniqueness theorem that permits to check \eqref{UC} related to MHD and micropolar fluid stabilization problems.
\section{Approximate controllability}\label{sect2}
\subsection{Proof of Theorem \ref{MainTheorem}}\label{subsect2.1}
First, let us recall that the solution $y$ of \eqref{eq1.1} such that $y(0)=y_0$ can be written as $y(t)=e^{A t} y_0+\Phi_t (u)$ with
\begin{equation}
\Phi_\tau (u)=\int_0^\tau e^{A(\tau-s)}Bu(s) \ {\rm d}s   .\label{DefPhitau}
\end{equation}
More precisely, using $(\mathcal{H}_3)$, one can check that $\Phi_\tau\in \mathcal{L}(L^2((0,\infty);U);H_{1/2-\gamma})$ and that
$$
\overline{\Ran\Phi_\tau}^{_{H_{1/2-\gamma}}}=H_{1/2-\gamma} \Leftrightarrow \ker\Phi_\tau^*=\{0\},
$$ 
where the adjoint of $\Phi_\tau$ is given by:
\begin{equation}\label{PhiAdj}
\forall \varepsilon\in H_{\gamma-1/2}^*,\qquad (\Phi_\tau^* \varepsilon)(t) = 
\left\{
\begin{array}{ll}
B^* e^{A^* (\tau-t)}\varepsilon &\mbox{ for }t\in [0,\tau],\\
0 &\mbox{ for }t>\tau.\\
\end{array}
\right.
\end{equation}
Therefore, approximate controllability of \eqref{eq1.1} is equivalent to the following condition
\begin{equation}\label{AppConttau}
\forall \varepsilon\in H_{\gamma-1/2}^*,\qquad  B^*e^{A^* t} \varepsilon = 0 \quad (t\in (0,\tau)) \Longrightarrow \quad \varepsilon=0.
\end{equation}
Moreover, since the semigroup is analytic, this condition is equivalent to 
\begin{equation}\label{II0.0}
\forall \varepsilon\in H_{\gamma-1/2}^*,\qquad B^*e^{A^* t} \varepsilon = 0 \quad (t\in (0,+\infty)) \Longrightarrow \quad \varepsilon=0.
\end{equation}

\paragraph{Proof of 1.}
Let us first prove that \eqref{UC} implies \eqref{II0.0}. 
For that, assume $\varepsilon\in H_{\gamma-1/2}^*$ satisfies
\begin{equation}
  \label{II0.10}
  B^*e^{A^* t} \varepsilon = 0 \quad (t\in (0,+\infty)).
\end{equation}
The Laplace transform of \eqref{II0.10} (see \cite[Thm. 4.3.7]{TucsnakWeiss}) first yields:
$$
B^* (\lambda-A^*)^{-1}\varepsilon=\int_0^\infty e^{-\lambda t} B^*e^{A^* t} \varepsilon \  {\rm d t}=0\quad \mbox{ for }\; \Re \lambda\geq \mu_0> \sup_{j\in \mathbb{N}} \Re\lambda_j
$$
and next $B^*(\lambda-A^*)^{-1}\varepsilon = 0$ for all for all $\lambda\in \rho(A^*)$ by analytic continuation. Now define
\begin{equation}\nonumber
R_{-n}^*(\Bar \lambda_j)=\frac{1}{2\pi \imath}\int_{|\lambda-\bar
  \lambda_j|=\delta}(\lambda-\Bar \lambda_j)^{n-1}(\lambda-A^*)^{-1}{\rm d\lambda},\quad  n\in \{1,\dots, m(\lambda_j)\}
\end{equation}
where $\delta>0$ is the same as in \eqref{DefProj}. Since $B^*(\lambda-A^*)^{-1}\varepsilon = 0$ for all $\lambda\in \rho(A^*)$ we deduce that
\begin{equation}
B^* R_{-n}^*(\Bar \lambda_j)\varepsilon=0\qquad \forall n\in \{1,\dots, m(\lambda_j)\}.\label{uc1.60}
\end{equation}
Moreover, by easy computations we verify that 
\begin{equation}
R_{-n-1}^*(\Bar \lambda_j)+(\Bar \lambda_j-A^*)R_{-n}^*(\Bar \lambda_j)=0\qquad \forall n\in \{1,\dots, m(\lambda_j)-1\}.\label{uc1.600}
\end{equation}
Finally, let us prove that \eqref{UC} with an inductive argument using \eqref{uc1.60}, \eqref{uc1.600} yields
\begin{equation}\label{R-1null}
R_{-1}^*(\Bar \lambda_j)\varepsilon=0.
\end{equation}
By \eqref{uc1.600} we have $R_{-m(\lambda_j)}^*(\Bar \lambda_j)=(A^*-\Bar \lambda_j)^{m(\lambda_j)-1}R_{-1}^*(\Bar \lambda_j)$, and 
since $\mbox{Ran}(R_{-1}(\lambda_j))=\Ker(\lambda_j-A)^{m(\lambda_j)}$ we deduce that 
$R_{-m(\lambda_j)}^*(\Bar \lambda_j)\varepsilon\in \ker(\bar \lambda_j-A^*)$. Then \eqref{uc1.60} with $n=m(\lambda_j)$ 
combined with \eqref{UC} first gives $R_{-m(\lambda_j)}^*(\Bar \lambda_j)\varepsilon=0$. As a consequence, \eqref{uc1.600} with $n=m(\lambda_j)-1$ yields 
$R_{-m(\lambda_j)+1}^*(\Bar \lambda_j)\varepsilon\in \ker(\bar \lambda_j-A^*)$, and \eqref{uc1.60} with $n=m(\lambda_j)-1$ combined with \eqref{UC} then gives 
$R_{-m(\lambda_j)+1}^*(\Bar \lambda_j)\varepsilon=0$. By reiterating the argument we successively obtain $R_{-n}^*(\Bar \lambda_j)\varepsilon=0$ from $n=m(\lambda_j)$ until $n=1$. 
Finally, since \eqref{R-1null} holds, we have $\varepsilon\in \Ran(R_{-1}(\lambda_j))^\perp$ for all $j\in \mathbb{N}$, 
and with assumption $(\mathcal{H}_2)$ we get $\varepsilon=0$.

\par Conversely, let us now prove that \eqref{II0.0} implies
\eqref{UC}. Suppose that  \eqref{UC} is false: there exist $j_0\in \mathbb{N}$
and an eigenvector $\varepsilon(\overline\lambda_{j_0})\neq 0$ (associated with the eigenvalue $\overline\lambda_{j_0}$) 
such that
$B^*\varepsilon(\overline\lambda_{j_0})=0$. Moreover, since we have (see \cite[Chapter 2]{PAZY})
$$
e^{A^* t}\varepsilon(\overline\lambda_{j_0})=e^{\bar \lambda_j t}\varepsilon(\overline\lambda_{j_0}),
$$
then $B^*e^{A^* t}\varepsilon(\overline\lambda_{j_0})=0$ for all $t\in (0,+\infty)$ 
and \eqref{II0.0} is false.

\paragraph{Proof of point 3.} Let us consider $v \in \ell^2(\mathbb{N},U)$. System \eqref{eq1.4} is approximately controllable if and only if 
the corresponding condition \eqref{UC} holds i.e.
\begin{equation}
    \forall\varepsilon\in {\cal D}(A^*), \quad \forall \lambda\in
    \mathbb{C}, \quad
A^*\varepsilon= \lambda \varepsilon \quad \text{and} \quad V(v)^*\varepsilon=0
\quad\Longrightarrow\quad 
\varepsilon = 0.\label{II0.6}
\end{equation}
It can be checked that the adjoint operator of $V(v)$ is given by
\begin{equation}
V(v)^*:H_\gamma^* \longrightarrow \ell^2({\mathbb N}),
\quad V(v)^*  \varepsilon=((v_l| B^* \varepsilon)_{U})_{ l\in {\mathbb N}},\label{eq1.3}
\end{equation}
and it is clear that \eqref{II0.6} is equivalent to \eqref{RankCond0}. We deduce that $v$ is admissible if an only if \eqref{RankCond0} is satisfied.

\paragraph{Proof of point 4.} Let us prove that the set of families $v\in \ell^2(\mathbb{N},U)$ satisfying \eqref{RankCond0} is the intersection of a 
countable family of open and dense subsets of $\ell^2(\mathbb{N},U)$.

For all $n\in \mathbb{N}$, we denote by $\mathcal{R}_n$ the 
subset of $\mathcal{L}(\mathbb{C}^n, \ell^2(\mathbb{N}))$ composed by
the linear bounded mapping of rank $n$. It is well-known that $\mathcal{R}_n$ is an open
and dense subset $\mathcal{L}(\mathbb{C}^n, \ell^2(\mathbb{N}))$. 

Let us recall that the linear mapping defined by \eqref{Wj2}
$$
W_j:\ell^2({\mathbb N},U)\to \mathcal{L}(\mathbb{C}^{\ell_j}, \ell^2(\mathbb{N}))
$$
is bounded. The set of families $v\in
\ell^2(\mathbb{N},U)$
such that  \eqref{RankCond0} holds for all $j\in\mathbb{N}$ can be written as 
$$
{\cal A} = \bigcap_{j\in \mathbb{N}}W_j^{-1}(\mathcal{R}_{\ell_j}).
$$
In order to prove that this set is residual, we prove that for all 
$j\in \mathbb{N}$, $W_j^{-1}(\mathcal{R}_{\ell_j})$ is an open
dense subset of $\ell^2(\mathbb{N},U)$. Let us fix $j\in \mathbb{N}$.

First, by using that $W_j$ is a continuous mapping, we deduce
that $W_j^{-1}(\mathcal{R}_{\ell_j})$ is an open subset of
$\ell^2(\mathbb{N},U)$. 

To prove the density, we proceed as in \cite{BT}: let us consider $v\in \ell^2(\mathbb{N},U)$.
Since \eqref{UC} holds, the family $B^* \varepsilon_i(\bar \lambda_j)$ for $i=1,\ldots,\ell_j$ is linearly independent and 
thus it can be seen that $v\in W_j^{-1}(\mathcal{R}_{\ell_j})$ if and only if the span of $v$ contains 
$B^* \varepsilon_i(\bar \lambda_j)$ for $i=1,\ldots,\ell_j$. In particular, if we decompose the first $\ell_j$ vectors of $v$ as
$$
v_k=\widehat v_k+\widetilde v_k, \quad \text{with}\ \widehat v_k\in \Span\{B^* \varepsilon_i(\bar \lambda_j), \ i=1,\ldots,\ell_j\}, \ 
\widetilde v_k\in \Span\{B^* \varepsilon_i(\bar \lambda_j), \ i=1,\ldots,\ell_j\}^\perp,
$$
then we see that $v$ is admissible if $(\widehat v_1,\ldots,\widehat v_{\ell_j})$ is a basis of $\Span\{B^* \varepsilon_i(\bar \lambda_j), \ i=1,\ldots,\ell_j\}$.
This is equivalent to say that the matrix of $(\widehat v_1,\ldots,\widehat v_{\ell_j})$ in the basis 
$(B^* \varepsilon_1(\bar \lambda_j),\ldots,B^* \varepsilon_{\ell_j}(\bar \lambda_j))$ is invertible. However such a matrix can always be approximated by invertible matrices and thus any $v$ can be approximated by $v^{\eta}\in \ell^2(\mathbb{N},U)$ such that $(\widehat v_1^\eta,\ldots,\widehat v_{\ell_j}^\eta)$ 
is a basis of $\Span\{B^* \varepsilon_i(\bar \lambda_j), \ i=1,\ldots,\ell_j\}$. Since such a $v^\eta$ is admissible this concludes the proof.

\paragraph{Proof of point 5.} The proof is completely similar to the proof of point 4. Just, observe 
that $K\geq \ell_j$ is clearly a necessary condition for \eqref{RankCond0} with $W_j(v)$ now defined by \eqref{Wj1}.

\paragraph{Proof of point 2.} The fact that $\eqref{UC}$ and \eqref{CondK} imply that \eqref{eq1.1} is approximately controllable follows from point 5. To
prove the converse implication one can use point 1 and the fact that if \eqref{CondK} does not hold, \eqref{RankCond0} is false.

\subsection{Remarks on Theorem \ref{MainTheorem} and on the hypotheses $(\mathcal{H}_1)$--$(\mathcal{H}_5)$}\label{subsect2.2}

\begin{Remark}
In Theorem \ref{MainTheorem}, one can only suppose $\gamma\geq 0$ in $(\mathcal{H}_5)$ and replace the hypothesis $(\mathcal{H}_3)$ by
\begin{equation}
  {\rm Ran}\Phi_\tau\subset H_{-\beta},\quad  \beta\in [\gamma-1,\gamma]\label{newPhiX}
\end{equation}
for some $\tau$ ($\Phi_\tau$ is defined by \eqref{DefPhitau}). First let us remark in that case \eqref{newPhiX} holds for all $\tau>0$ and $\Phi_\tau$ is 
bounded from $L^2(0,+\infty;U)$ into $H_{-\beta}$ for all $\tau>0$, see \cite[Prop. 4.2.2.]{TucsnakWeiss}.

Let us note that in the quoted work, the result is obtained for $\gamma=1$ and $\beta=0$, but it can be extended to $\gamma\geq 0$ and 
$\beta\in [ \gamma-1, \gamma]$ with the change of variable 
$\Tilde \Phi_\tau\ov (\mu_0-A)^{-\beta} \Phi_\tau$, because for such values $\gamma$, $\beta$ we have $(\mu_0-A)^{-\beta}B: U\to H_{-1}$ bounded.

However, because the expression of $\Phi_\tau^*$ given by \eqref{PhiAdj} is not necessarily valid for $\varepsilon\in H_{\beta}^*$, expressions \eqref{AppConttau} or \eqref{II0.0} must be slightly modified. 
Following \cite[Par. 4.3 and Par. 4.4]{TucsnakWeiss} we have 
\begin{equation}\label{PhiAdj2}
\forall \varepsilon\in H_\beta^*,\quad (\Phi_\tau^* \varepsilon)(t) = 
\left\{
\begin{array}{ll}
(\Psi \varepsilon)(\tau-t) &\mbox{ for }t\in [0,\tau]\\
0 &\mbox{ for }t>\tau,\\
\end{array}
\right.
\end{equation}
where $\Psi$ denotes the uniquely determined \textit{extended output map} $\Psi:H_{\beta}^*\to L_{\rm loc}^2([0,+\infty); U)$ satisfying
$$
(\Psi \varepsilon)(t)=B^*e^{A^* t}\varepsilon\qquad \forall \varepsilon\in H_{\gamma}^*, \; t\geq 0.
$$
About existence and uniqueness of such $\Psi$, see \cite[p. 123]{TucsnakWeiss}\footnote{ Since the quoted work only applies for $\gamma=1$ and $ \beta=0$ it can be used to 
justify the existence and the uniqueness of $\Tilde \Psi:H\to L_{\rm loc}^2([0,+\infty); U)$ satisfying 
$(\Tilde \Psi \varepsilon)(t)=B^*(\mu_0-A^*)^{-\beta} e^{A^* t}\varepsilon$ for all $\varepsilon\in H_{1}^*$, $t\geq 0$,
and we check that $\Psi = \Tilde \Psi (\mu_0-A^*)^{\beta}$ obeys the desired property.}.
Then from 
$$\overline{\cup_{\tau>0} \Ran\Phi_\tau}^{_H}=H \Longleftrightarrow \cap_{\tau>0} \ker \Phi_\tau^*=\{0\}$$ 
we also get that \eqref{eq1.1} is 
approximately controllable if and only if
\begin{equation}\label{AppConttauter}
\forall \varepsilon\in H_\beta^*,\quad ({\Psi} \varepsilon)(t) = 0 \quad (t\in (0,\infty)) \Longrightarrow \quad \varepsilon=0.
\end{equation}
From this relation, one can follow the proof of the point 1 of Theorem \ref{MainTheorem}.
\end{Remark}

\begin{Remark}
According to Keldy's Theorem, assumption $(\mathcal{H}_2)$ is satisfied by a class of perturbations of self-adjoint operator: 
if $A_0$ is a self-adjoint operator in $H$ with compact resolvent, if $A_1$ is an operator such that $A_1(-A_0)^{-\alpha}$ 
is bounded for some $0\leq \alpha<1$, and if the eigenvalues $(\lambda_j)_{j\in \mathbb{N}}$ of $A_0$ satisfy for some $1\leq p<+\infty$:
\begin{equation}
\sum_{j\in \mathbb{N}}\frac{1}{|\lambda_j|^{p}}<+\infty,\label{SumVP}
\end{equation}
then $A=A_0+A_1$ with domain $D(A)=D(A_0)$ is closed and its family of root vectors 
is complete in $H$, see \cite[Thm.10.1, p.276]{Krein} or \cite[Thm.5.6.1.3, p.394]{Triebel} 
combined with \cite[Thm.5.6.1.1, p.392 and Lem.5.6.1.2, p.395]{Triebel}. 
Note that Weyl's formula ensures that \eqref{SumVP} is satisfied by regular self-adjoint elliptic operator, see \cite[p.395]{Triebel}.
\end{Remark}

\begin{Remark} 
If $-A$ is positive we can define its fractional powers $(-A)^{\alpha}$ with $0<\alpha<1$, and we easily verify that
the set of eigenvectors of $(-A)^{\alpha}$ 
and $(-A^*)^{\alpha}$ coincide with the set of eigenvectors of $A$ and  $A^*$ respectively. Then assumptions $(\mathcal{H}_1)$, $(\mathcal{H}_2)$ and \eqref{UC} holds for 
$-(-A)^{\alpha}$ if they hold for $A$. Moreover, if $(A,B)$ satisfies $(\mathcal{H}_5)$ for some $\gamma\geq 0$ then $(-(-A)^\alpha,B)$ satisfies $(\mathcal{H}_5)$ 
for $\frac{\gamma}{\alpha}$. Then under assumptions $(\mathcal{H}_1)$, $(\mathcal{H}_2)$, $(\mathcal{H}_5)$ conclusions of Theorems \ref{MainTheorem} are also true for 
$-(-A)^{\alpha}$ with $0<\alpha<1$. This should be compared with the fact that the null controllability of $(A,B)$ does not imply the null controllability of 
$(-(-A)^\alpha,B)$ for $0<\alpha<1/2$, see \cite{MicuZuazua}. 
\end{Remark}

\subsection{Approximate controllability of real systems}\label{subsect2.3}

In many practical examples system \eqref{eq1.1} is originally defined on a real Hilbert space for control functions with values in a real control space: $A$ is originally defined 
as an unbounded operator on a real Hilbert space $G$ (i.e. $A: \mathcal{D}(A)\subset G\to G$) and the input $B$ is originally 
defined as a bounded operator from a real Hilbert space $W$ 
into the real space $G_{-\gamma}$ (defined as $H_{-\gamma}$ in \eqref{Halpha} but now from real operators $A$, $A^*$). Then for $y(0)\in G_{1/2-\gamma}$ and $u\in L^2(0,+\infty;W)$ 
the trajectories $t\mapsto y(t)$ are continuous with values in the 
real Hilbert space $G_{1/2-\gamma}$. In such situation, \eqref{eq1.1} is referred as a 
\textit{real system} and all the above definitions of approximate controllability and admissibility can be stated for real spaces in the same manner as it has been done for complex spaces. 
It follows that complex spaces $H$ and $U$ are simply the complexified spaces $H=G+\imath G$ and $U=W+\imath W$ and to recover the above complex framework it suffices to consider 
extensions of $A$ and $B$ to $H$ and $U$ respectively. 

It is clear that the approximate controllability 
of the complex system implies the approximate controllability of the real system. This follows by remarking that the complex system \eqref{eq1.1} can be decompose in two uncoupled 
real systems corresponding to real and imaginary parts of \eqref{eq1.1}. The same argument yields that if $(v_j)_j$ is an admissible family of $U$ for the complex 
system then $(\Re v_j,\Im v_j)_j$ is an admissible family of $W$ for the real system. In particular, if the complex system is approximately controllable by $K$-dimensional control then 
the real system is approximately controllable by $2K$-dimensional control. However, \eqref{CondK} and \eqref{UC} are also sufficient for approximate controllability by 
$K$-dimensional control of the real system. A slight modification of the proof of Theorem \ref{MainTheorem} permits to obtain a real version of Theorem \ref{MainTheorem} 
stated in Theorem \ref{MainTheoremReal} below. 

Before stating this result, we need some additional notation. Recall that for each $\lambda_j$ the family $\varepsilon_k(\overline{\lambda}_j)$, $k=1,\dots,\ell_j$, denotes a 
basis of $\Ker(\overline{\lambda}_j-A^*)$ and we denote by $\mathcal{F}$ the subspace of $G$ generated by real and imaginary parts of $\varepsilon_k(\overline{\lambda}_j)$. 
Note that when $\overline{\lambda}_j=\lambda_j$ is real we can suppose $\varepsilon_k(\overline{\lambda}_j)\in G$. Moreover, since $A^*$ is real its spectrum 
is symetric with respect to the real line and non real eigenvalues 
are pairwise conjugate with pairwise conjugate basis of eigenvectors, i.e for all $j\in \mathbb{N}$ the complex value $\lambda_j$ is also an eigenvalues 
of $A^*$ with corresponding basis of eigenvector $\{\overline{\varepsilon_k(\overline{\lambda}_j)}\mid k=1,\dots,\ell_j\}$. Then with
\begin{equation}\label{DefJi}
 \begin{split}
\mathcal{J}_0 &\ov \{j\in \mathbb{N} \mid \Im \lambda_j=0 \},\quad \mathcal{J}_+ \ov \{j\in \mathbb{N} \mid \Im \lambda_j > 0\}\quad \mbox{ and }\quad 
\mathcal{J}_- \ov \{j\in \mathbb{N} \mid \Im \lambda_j < 0\}
 \end{split}
\end{equation}
the subspace $\mathcal{F}$ is also defined by:
\begin{equation}\label{DefChi}
\mathcal{F} \ov \overline{\Span_{\mathbb{R}}\{\chi_k(\overline{\lambda}_j) \mid j\in \mathbb{N},\;k=1,\dots,\ell_j \}}^{_G}\quad\mbox{ where}\quad \chi_{k}(\overline{\lambda}_j)\ov 
\left\{\begin{array}{ll}
\varepsilon_k(\overline{\lambda}_j) &\mbox{ if }\,j\in \mathcal{J}_0,\\
\Re \varepsilon_k(\overline{\lambda}_j)&\mbox{ if }\, j\in \mathcal{J}_+,\\
\Im \varepsilon_k(\overline{\lambda}_j)&\mbox{ if }\, j\in \mathcal{J}_-.
\end{array}
\right.
\end{equation}
The following real version of Theorem \ref{MainTheorem} holds.
\begin{Theorem}\label{MainTheoremReal}
Suppose that $A$ and $B$ are real operators defined on real Hilbert spaces, $G$ and $W$ respectively. Assume that the complexified of 
$A$ and $B$ satisfy $(\mathcal{H}_1)$, $(\mathcal{H}_2)$, $(\mathcal{H}_3)$, $(\mathcal{H}_4)$ and $(\mathcal{H}_5)$ and let $K\in \mathbb{N}^*$. 
Then the following results hold.
\begin{enumerate} 
\item Real system \eqref{eq1.1} is approximately controllable if 
and only if \eqref{UC} is satisfied. 
\item Real system \eqref{eq1.1} is approximately
controllable by a $K$-dimensional control if and only if \eqref{UC} and \eqref{CondK} are satisfied.
\item A family $v\in \ell^2(\mathbb{N},W)$ is admissible if and only if \eqref{RankCond0} is satisfied.
\item Assume that \eqref{UC} is true. Then the set of admissible families of $\ell^2(\mathbb{N},W)$ forms a residual set of $\ell^2(\mathbb{N},W)$. Moreover, if $v\in \ell^2(\mathbb{N},W)$ is admissible then its orthogonal projection onto $\ell^2(\mathbb{N},B^*\mathcal{F})$
 is admissible.
\item Assume that \eqref{UC} and \eqref{CondK} are true. Then the set of admissible families of $W^K$ forms a residual set of $W^K$. Moreover, if $v\in W^K$ is admissible then its orthogonal projection onto $(B^*\mathcal{F})^K$ is admissible.
\end{enumerate}
\end{Theorem}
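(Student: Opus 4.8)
The plan is to deduce Theorem~\ref{MainTheoremReal} from its complex counterpart, Theorem~\ref{MainTheorem}, by complexification, the only genuinely new point being the dimension count: the conjugate symmetry of a real operator will let us reach full rank with $K=\sup_j\ell_j$ real controls rather than the $2K$ produced by the naive real/imaginary splitting described just before the statement. First I would record the equivalence between the two frameworks. Writing a complex control as $u=u_1+\imath u_2$ and a complex datum as $y_0=c+\imath d$, linearity shows that the real system is approximately controllable (resp.\ admits the admissible family $v$, resp.\ is approximately controllable by $K$-dimensional control) if and only if the complexified one is. Hence point~1 is immediate from Theorem~\ref{MainTheorem}.1 together with \eqref{UC}, and point~3 from Theorem~\ref{MainTheorem}.3 together with \eqref{RankCond0}, once $v\in\ell^2(\mathbb N,W)$ is read as an element of $\ell^2(\mathbb N,U)$ through $W\subset U$.

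The key lemma is the conjugate symmetry of the rank conditions. Since $A$ (hence $A^*$) is real, its spectrum is symmetric about the real axis and one may choose $\varepsilon_k(\lambda_j)=\overline{\varepsilon_k(\overline\lambda_j)}$ for the conjugate eigenvalue, so that $B^*\varepsilon_k(\lambda_j)=\overline{B^*\varepsilon_k(\overline\lambda_j)}$ because $B^*$ is real. For a real family $v$ this gives, at a conjugate pair $j\in\mathcal J_+$, $j'\in\mathcal J_-$ with $\lambda_{j'}=\overline{\lambda_j}$, the relation $W_{j'}(v)=\overline{W_j(v)}$ (entrywise conjugate), whence $\rank W_{j'}(v)=\rank W_j(v)$. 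Consequently \eqref{RankCond0} over all $j$ is equivalent to \eqref{RankCond0} over $\mathcal J_0\cup\mathcal J_+$ only, and the necessary and sufficient size is governed by $\sup_j\ell_j$, not $2\sup_j\ell_j$; in particular the ``only if'' part of point~2 follows, since $K<\ell_{j_0}$ forces $\rank W_{j_0}(v)<\ell_{j_0}$.

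For points~4 and~5 I would adapt the genericity argument of Theorem~\ref{MainTheorem}.4--5 to the real setting. Fixing $j$, note that $\rank W_j(v)=\ell_j$ is equivalent to injectivity, on $S_j\ov\Span_{\mathbb C}\{B^*\varepsilon_k(\overline\lambda_j)\mid k=1,\dots,\ell_j\}$, of the map $\eta\mapsto\big((v_l\mid\eta)_U\big)_l$, which by \eqref{UC} is $\ell_j$-dimensional over $\mathbb C$. Because each $v_l$ is real, $(v_l\mid\eta)_U=0$ holds if and only if $v_l$ is orthogonal in $W$ to both $\Re\eta$ and $\Im\eta$; thus the map fails to be injective exactly when some nonzero $\eta\in S_j$ has $\Re\eta$ and $\Im\eta$ simultaneously orthogonal to $\Span_{\mathbb R}\{v_1,\dots,v_K\}$. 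Counting real dimensions, this pits $2\ell_j$ real coordinates of $\eta$ against $2K$ real linear conditions, so $K\geq\ell_j$ is necessary and, for $v$ in general position, sufficient. Openness of each full-rank set comes from continuity of $W_j$; density comes from the fact that the degenerate families form a proper algebraic, hence nowhere dense, subset, exactly as in the proof of point~4. Intersecting over the countably many $j$ (using $(\mathcal H_1)$) yields a residual set, in $\ell^2(\mathbb N,W)$ for point~4 and in $W^K$ for point~5; nonemptiness then gives the ``if'' part of point~2.

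Finally, the projection statement follows from the observation that, for $\eta\in S_j$, both $\Re\eta$ and $\Im\eta$ lie in $B^*\mathcal F$: indeed $\mathcal F$ collects $\Re\varepsilon_k(\overline\lambda_j)$ over $\mathcal J_+$ and $\Im\varepsilon_k(\overline\lambda_j)$ over $\mathcal J_-$ (see \eqref{DefChi}), which by the conjugate relation $\varepsilon_k(\lambda_j)=\overline{\varepsilon_k(\overline\lambda_j)}$ supply both the real and imaginary parts of each $B^*\varepsilon_k(\overline\lambda_j)$. Hence $(v_l\mid\eta)_U=(\Pi_{B^*\mathcal F}v_l\mid\eta)_U$, so $W_j(v)=W_j\big(\Pi_{(B^*\mathcal F)^K}v\big)$ for every $j$, and admissibility is preserved by the orthogonal projection onto $(B^*\mathcal F)^K$ (resp.\ onto $\ell^2(\mathbb N,B^*\mathcal F)$). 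The main obstacle is precisely the dimension bookkeeping of the third paragraph: one must verify that the conjugate symmetry genuinely halves the count, i.e.\ that $\ell_j$ real controls suffice to produce a complex $K\times\ell_j$ matrix of rank $\ell_j$, rather than being forced up to $2\ell_j$ by the separation into real and imaginary parts.
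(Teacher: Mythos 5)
Your route is the paper's own: everything except the density statements in points 4--5 is read off from Theorem~\ref{MainTheorem} by complexification, and the real-specific content is the conjugate pairing of eigenvalues and eigenvectors of a real operator. Your two structural observations --- that $W_{j'}(v)=\overline{W_j(v)}$ for real $v$ at a conjugate pair $\lambda_{j'}=\overline{\lambda_j}$, and that $W_j(v)=W_j\bigl(\Pi_{(B^*\mathcal{F})^K}v\bigr)$ because $\Re\eta$ and $\Im\eta$ lie in $B^*\mathcal{F}$ for every $\eta\in S_j=\Span_{\mathbb C}\{B^*\varepsilon_k(\bar\lambda_j)\}$ --- are correct and are exactly what the paper uses; the second is the content of the identity $\Span_{\mathbb C}\{B^*\chi_i(\bar\lambda_j)\}=\Span_{\mathbb C}\{B^*\varepsilon_i(\bar\lambda_j)\}$ invoked in the paper's proof.

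The gap, which you flag yourself in your closing paragraph, is the sufficiency half of the density argument. Asserting that the degenerate real families form a ``proper algebraic, hence nowhere dense, subset'' requires the word \emph{proper} to be earned: one must exhibit at least one real family $v\in W^{\ell_j}$ with $\rank W_j(v)=\ell_j$. The count ``$2\ell_j$ real coordinates of $\eta$ against $2K$ real conditions'' does not deliver this, because the $2K$ conditions are not free: they come in $K$ pairs constrained by the single real vector $v_l$, and whenever $\Re\eta$ and $\Im\eta$ are real-dependent (e.g.\ $\eta$ a complex multiple of a real vector of $S_j$) each pair collapses to one condition, so the general-position claim is not automatic. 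The paper closes this by projecting the $v_k$ onto $\Span_{\mathbb R}\{B^*\chi_i(\bar\lambda_j)\}$ and perturbing the coordinate matrix to an invertible real one; a clean way to justify that such a perturbation exists is to note that, by \eqref{UC}, the $2\ell_j$ real vectors $\{\Re B^*\varepsilon_k(\bar\lambda_j),\,\Im B^*\varepsilon_k(\bar\lambda_j)\}_{k}$ have complex span containing $S_j$, so the $\ell_j\times 2\ell_j$ matrix of their pairings with the $B^*\varepsilon_k(\bar\lambda_j)$ has rank $\ell_j$, and any $\ell_j$ independent columns give an admissible real family of size $\ell_j$. Once one such family is in hand, the bad set at level $j$ is the zero set of $v\mapsto\det\bigl(W_j(v)^*W_j(v)\bigr)$, a not-identically-vanishing function that is polynomial on finite-dimensional affine slices, hence closed with empty interior; intersecting over $j$ gives the residual set. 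With that step inserted, your argument is complete and coincides with the paper's.
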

\begin{proof}
 
The above statement is a consequence of Theorem \ref{MainTheorem}. Let us only precise point 4. and in particular that, under assumption \eqref{UC}, the set of families $v\in \ell^2(\mathbb{N},W)$ such that \eqref{RankCond0} holds
form a residual set of $\ell^2(\mathbb{N},W)$. The proof of the fact that the set of admissible family $v\in \ell^2(\mathbb{N},W)$ is the intersection of open subsets of $\ell^2(\mathbb{N},W)$ is similar to the proof of 
Theorem \ref{MainTheorem}, and we skip it.
For the density the arguments in the proof of Theorem \ref{MainTheorem} must be adapted:
we decompose the first $\ell_j$ vectors of $v$ as
$$
v_k=\widehat v_k+\widetilde v_k, \quad \text{with}\ \widehat v_k\in \Span_\mathbb{R}\{B^* \chi_i(\bar \lambda_j), \ i=1,\ldots,\ell_j\}, \ 
\widetilde v_k\in \Span_\mathbb{R}\{B^* \chi_i(\bar \lambda_j), \ i=1,\ldots,\ell_j\}^\perp.
$$
Then we notice that
$$
\Span_\mathbb{C}\{B^* \chi_i(\bar \lambda_j), \ i=1,\ldots,\ell_j\}=\Span_\mathbb{C}\{B^* \varepsilon_i(\bar \lambda_j), \ i=1,\ldots,\ell_j\}.
$$
In particular, we see that $v$ is admissible if $(\widehat v_1,\ldots,\widehat v_{\ell_j})$ is a basis of $\Span_\mathbb{C}\{B^* \varepsilon_i(\bar \lambda_j), \ i=1,\ldots,\ell_j\}$.
This is equivalent to say that $(\widehat v_1,\ldots,\widehat v_{\ell_j})$ is linearly independent or to say that
the matrix of $(\widehat v_1,\ldots,\widehat v_{\ell_j})$ in the basis 
$(B^* \chi_1(\bar \lambda_j),\ldots,B^* \chi_{\ell_j}(\bar \lambda_j))$ is invertible. However such a matrix can always be approximated by invertible real matrices and thus any $v$ can be approximated by $v^{\eta}\in \ell^2(\mathbb{N},W)$ such that $(\widehat v_1^\eta,\ldots,\widehat v_{\ell_j}^\eta)$ 
is a basis of $\Span_\mathbb{C}\{B^* \varepsilon_i(\bar \lambda_j), \ i=1,\ldots,\ell_j\}$. Since such a $v^\eta$ is admissible this concludes the proof.
\end{proof}
\subsection{Approximate controllability of some coupled heat equations}\label{subsect2.4}
Here we give a simple example of non diagonalizable system which consists in two coupled heat equations:
\begin{equation}
\left\{
\begin{array}{ll}
\displaystyle y_t-\Delta y+z=0&\displaystyle \mbox{ in }(0,T)\times \Omega,\\
\displaystyle z_t-\Delta z={\bf 1}_{\omega} h&\displaystyle \mbox{ in }(0,T)\times \Omega,\\
\displaystyle y=z=0&\displaystyle \mbox{ on }(0,T)\times \partial\Omega,\\
\displaystyle y(0)=y_0\; \mbox{ and }\; z(0)=z_0&\displaystyle \mbox{ in } \Omega.\\
\end{array}
\right.\label{EXeq1}
\end{equation}
Above, $\Omega$ is an open subset of $\mathbb{R}^d$, $d\geq 1$,  $(y_0,z_0)\in L^2(\Omega)\times L^2(\Omega)$, $\omega$ is a non empty open subset of $\Omega$, 
${\bf 1}_{\omega}$ is the characteristic function of $\omega$ and $h\in L^2((0,T)\times \Omega)$ is the control function. 

The above system verify the hypotheses of the previous section and in particular, we have the following result
\begin{Proposition}\label{PHeat1}
System \eqref{EXeq1} is approximately controllable. It is approximately
controllable by a $K$-dimensional control if and only if 
\begin{equation}
\sup_{j\in \mathbb{N}} \widehat \ell_j\leq K,\label{CondKHeat}
\end{equation}
where $\widehat \ell_j$ are the geometric multiplicities of the operator $-\Delta  : H^2(\Omega)\cap H_0^1(\Omega) \to L^2(\Omega)$.
\end{Proposition}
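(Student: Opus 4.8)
The plan is to put \eqref{EXeq1} in the abstract form \eqref{eq1.1} and then read off the conclusion from Theorem \ref{MainTheorem}. I would take $H\ov L^2(\Omega)\times L^2(\Omega)$, $U\ov L^2(\Omega)$, and set
\[
A\ov\begin{pmatrix}\Delta & -I\\[2pt] 0 & \Delta\end{pmatrix},\qquad D(A)\ov\left(H^2(\Omega)\cap H_0^1(\Omega)\right)^2,\qquad Bh\ov(0,\mathbf{1}_\omega h),
\]
so that \eqref{eq1.1} with state $(y,z)$ and control $h$ reproduces \eqref{EXeq1}. Since $A$ is a \emph{bounded} perturbation of the self-adjoint operator $A_0\ov\mathrm{diag}(\Delta,\Delta)$, which has compact resolvent, $A$ generates an analytic semigroup and has compact resolvent, giving $(\mathcal{H}_1)$ and $(\mathcal{H}_3)$; completeness of the root vectors $(\mathcal{H}_2)$ follows from Keldysh's theorem exactly as in the remark following Theorem \ref{MainTheorem}, using Weyl's asymptotics $\mu_k\sim c\,k^{2/d}$ for the Dirichlet eigenvalues to secure \eqref{SumVP}. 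The interpolation identity $(\mathcal{H}_4)$ holds because $A_0$ is dissipative self-adjoint and the off-diagonal term is bounded, and $B\in\mathcal{L}(U,H)$ gives $(\mathcal{H}_5)$ with $\gamma=0$.

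The crucial step is the spectral analysis. Writing $-\Delta\phi=\mu_k\phi$ with $E_k\ov\Ker(\mu_k+\Delta)$, I would show that the eigenvalues of $A$ are exactly $\lambda=-\mu_k$ and that $\Ker(\lambda-A)=\{(\phi,0)\mid\phi\in E_k\}$, so that $\ell_j=\dim E_k=\widehat\ell_j$. Indeed, solving $A(y,z)=\lambda(y,z)$, the second component forces $z$ to be a $\lambda$-eigenfunction of $\Delta$, while the first component $(\Delta-\lambda)y=z$ then forces $z\in\Ran(\Delta-\lambda)\cap\Ker(\Delta-\lambda)=\{0\}$ by self-adjointness, leaving $y\in E_k$ and $z=0$. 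A short induction gives $(A-\lambda)^n(y,z)=\big((\Delta-\lambda)^n y-n(\Delta-\lambda)^{n-1}z,\,(\Delta-\lambda)^n z\big)$, whence $\Ker(\lambda-A)^n=E_k\times E_k$ for all $n\geq 2$; thus $m(\lambda_j)=2$ and the algebraic multiplicity is $N_j=2\widehat\ell_j$. This is precisely where the non-diagonalizable Jordan structure appears, yet it doubles only the algebraic and not the geometric multiplicity.

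Next I would verify the Fattorini condition \eqref{UC}. The adjoint is $A^*=\begin{pmatrix}\Delta & 0\\ -I & \Delta\end{pmatrix}$ with $B^*(\varepsilon_1,\varepsilon_2)=\mathbf{1}_\omega\varepsilon_2$, and the same argument (with the roles of the two rows interchanged) yields $\Ker(\lambda-A^*)=\{(0,\psi)\mid\psi\in E_k\}$. An eigenvector $\varepsilon=(0,\psi)$ satisfying $B^*\varepsilon=0$ then obeys $\psi=0$ on $\omega$, where $\psi$ solves $-\Delta\psi=\mu_k\psi$ in $\Omega$ with $\psi=0$ on $\partial\Omega$. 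By the unique continuation property for the Laplacian (a Dirichlet eigenfunction vanishing on a nonempty open subset vanishes identically), $\psi\equiv 0$, hence $\varepsilon=0$ and \eqref{UC} holds.

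With \eqref{UC} established, point~1 of Theorem \ref{MainTheorem} gives the unconditional approximate controllability asserted first, and point~2 gives approximate controllability by $K$-dimensional control if and only if $\sup_j\ell_j\leq K$; since $\ell_j=\widehat\ell_j$ this is exactly \eqref{CondKHeat}. I expect the main obstacle to be the spectral step, namely establishing $\ell_j=\widehat\ell_j$ for the non-diagonalizable operator $A$ and identifying the adjoint eigenvectors, combined with the appeal to unique continuation; by contrast the checking of $(\mathcal{H}_1)$--$(\mathcal{H}_5)$ is routine for this concrete parabolic coupling.
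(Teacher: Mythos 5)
Your proposal is correct and follows essentially the same route as the paper: the same abstract reformulation with the upper-triangular $A$ and $B=(0,\mathbf{1}_\omega)^t$, the same spectral computation showing $\Ker(\lambda_j-A^*)=\{0\}\times E_k$ (hence $\ell_j=\widehat\ell_j$, $m(\lambda_j)=2$, $N_j=2\widehat\ell_j$), and the same reduction of \eqref{UC} to unique continuation for a Dirichlet eigenfunction vanishing on $\omega$ (the paper cites Holmgren), before invoking points 1 and 2 of Theorem \ref{MainTheorem}. The only differences are cosmetic: you verify $(\mathcal{H}_2)$ via Keldysh/Weyl explicitly (which the paper relegates to a remark) and you additionally compute $\Ker(\lambda-A)$, which is not needed once the adjoint eigenspaces are identified.
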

On can also deduce the other consequences of Theorem \ref{MainTheorem} for system \eqref{EXeq1}.
Let us give here the main ideas of the proof of Proposition \ref{PHeat1}.
First, system \eqref{EXeq1} can be rewritten in the form \eqref{eq1.1} with 
$$
A=\left (
\begin{array}{cc}
\Delta & -I\\
0 &\Delta 
\end{array}
\right)
\quad \mbox{ and }\quad 
B=\left(
\begin{array}{ll}
0\\
{\bf 1}_{\omega}
\end{array}
\right).
$$
It is clear that $A$ with domain ${\cal D}(A)=(H^2(\Omega)\cap H_0^1(\Omega))\times (H^2(\Omega)\cap H_0^1(\Omega))$ satisfies 
$(\mathcal{H}_1)$, $(\mathcal{H}_2)$, $(\mathcal{H}_3)$ and $(\mathcal{H}_4)$ with $H=L^2(\Omega)\times L^2(\Omega)$ and that $B$ satisfies $(\mathcal{H}_5)$ with $U=L^2(\Omega)\times L^2(\Omega)$ and 
$\gamma=0$ (i.e. $B$ is bounded). Moreover, it is easily seen that 
$$ 
A^*=\left (
\begin{array}{cc}
\Delta & 0\\
-I &\Delta 
\end{array}
\right)
\quad \mbox{ and }\quad 
\mathcal{D}(A^*)=\mathcal{D}(A).
$$
Let denote by $\lambda_j$, $j\in \mathbb{N}$, the Dirichlet Laplacian eigenvalues with geometric multiplicities $\ell_j$ 
and related basis of eigenvectors $\{\xi_{k}(\lambda_j)\mid k=1,\dots\ell_j\}$. Simple computations show that the eigenvalues of $A^*$ are exactly the $\lambda_j$'s and that proper 
eigenspaces and generalized eigenspaces 
are given by 
\begin{eqnarray*}
\ker(\lambda_j-A^*)&=&\Span\left\{\varepsilon_k(\lambda_j)\mid \; k=1,\dots,\ell_j \right\}\\
\ker(\lambda_j-A^*)^2&=&\Span\left\{\varepsilon_k(\lambda_j), f_k(\lambda_j)\mid \; k=1,\dots,\ell_j \right\},
\end{eqnarray*}
where
$$
 \varepsilon_{k}(\lambda_j)\ov \left(
\begin{array}{c}
\displaystyle  0\\
\displaystyle  \xi_{k}(\lambda_j)
\end{array}
\right),
\quad 
f_{k}(\lambda_j)\ov \left(
\begin{array}{c}
\displaystyle  \xi_{k}(\lambda_j)\\
\displaystyle  0
\end{array}
\right).
$$
Thus we are in the situation where $m(\lambda_j)=2$ and where algebraic multiplicity of $\lambda_j$ is $N_j=2\ell_j$ while the geometric multiplicity of $\lambda_j$ is $\ell_j$.

Finally, \eqref{UC} reduces to:
\begin{equation}
\left\{
\begin{array}{ll}
\displaystyle \Delta y=\lambda y&\displaystyle \mbox{ in }\Omega,\\
\displaystyle \Delta z-y=\lambda z&\displaystyle \mbox{ in }\Omega,\\
\displaystyle y=z=0&\displaystyle \mbox{ on }\partial \Omega,\\
\end{array}
\right.\quad \mbox{ and }\quad z\equiv 0 \mbox{ in }\omega \quad \Longrightarrow \quad \left\{
\begin{array}{ll}
\displaystyle y\equiv 0&\displaystyle \mbox{ in }\Omega,\\
\displaystyle z\equiv 0&\displaystyle \mbox{ in }\Omega.
\end{array}
\right. \label{EXeq2}
\end{equation}
To prove \eqref{EXeq2}, suppose that the left part of \eqref{EXeq2} is satisfied with $\lambda=\lambda_j$ for some $j$ (otherwise the conclusion is obvious). 
Then $\,^t(y,z)\in \ker(\lambda_j-A^*)$ which implies $y=0$ and the conclusion follows from Holmgren's uniqueness Theorem, 
see for instance \cite[Thm.8.6.5, p.309]{HormanderI}. In conclusion, Theorem \ref{MainTheorem} applies and \eqref{EXeq1} is approximately controllable in any time.

\section{Stabilizability of parabolic systems}\label{sectstab0}

\subsection{Proof of Theorem \ref{CorStab}}\label{subsect3.1}
For the following we suppose that assumptions $(\mathcal{H}_1)$, $(\mathcal{H}_3)$, $(\mathcal{H}_5)$ and  $(\mathcal{H}_6)$
are satisfied and we first  rewrite  \eqref{eq1.1} in two equations, one related to the ``unstable'' modes and the other to the ``stable'' 
modes. Let $N\in {\mathbb N}^*$ be such that: 
\begin{equation}
\Re\lambda_0\geq\Re\lambda_1 \geq \dots\geq \Re\lambda_N\geq  -\sigma>\Re\lambda_{N+1}\geq\dots,\label{LNsigmaLN1}
\end{equation}
and set $\Sigma_N\ov \{\lambda_k\mid k=1,\dots,N\}$. Such an integer $N$ exists because of conditions $(\mathcal{H}_1)$, $(\mathcal{H}_3)$ and $(\mathcal{H}_6)$. Let us note that condition $(\mathcal{H}_3)$ yields that
the spectrum of $A$ is located in a sector of a left half-plane with an opening angle strictly lower than $\pi$.  
 Thus, we split \eqref{eq1.1} in two equations, one related to the ``unstable'' modes $\Sigma_N$ and the other to the ``stable'' 
modes $\Sigma\backslash \Sigma_N$ (see \cite[Par. III.4, Thm. 6.17, p.178]{Kato1966}). For that, we introduce the projection operator defined by 
\begin{equation}
P_N\ov \frac{1}{2\pi \imath}\int_{\Gamma_N}(\lambda -A)^{-1}{\rm d\lambda},
\end{equation}
where $\Gamma_N$  be a contour enclosing $\Sigma_N$ but no other point of the spectrum of $A$. Then the space $H$ is the direct sum of the two invariant subspaces $H_N=P_N H$ and $H_N^-=(I-P_N)H$ of $A$. We also introduce the adjoint of $P_N$ which has the following expression: 
\begin{equation}
P_N^*= \frac{1}{2\pi \imath}\int_{\Gamma_N}(\lambda -A^*)^{-1}{\rm d\lambda}.
\end{equation}
Then, the solution $y$ of \eqref{eq1.1}, which can be rewritten $y=y_N+y_N^-$ with $y_N=P_N y$ and $y_N^-=(I-P_N)y$, is solution to systems:
\begin{eqnarray}
y_N'&=A_N y_N+B_N p_N u\in H_{N},\label{equnstab}\\
y_N^{-'}&=A_N^- y_N^-+B_N^- p_N^- u \in  H_{N}^{-}.\label{eqstab}
\end{eqnarray}
In the above setting, $A_N$ and $A_N^-$ denote the restriction of $A$ to $H_N$ and $H_N^-$ respectively, and $B_N=P_N Bp_N^*$,  $B_N^{-}=(I-P_N) Bp_N^{*-}$ where $p_N:U\rightarrow U_{N}$ and $p_N^-:U\rightarrow U_{N}^-$ are the orthogonal projection operators on
$$
U_{N}\ov \{B^*\varepsilon\mid \varepsilon\in H_{N}^*\ov P_N^*H \}\quad \mbox{ and }\quad U_{N}^-\ov \{B^*\varepsilon\mid \varepsilon\in H_{N}^{*-}\ov (I-P_N^*)H \},
$$
respectively.  Note that their respective adjoints $p_N^*:U_{N}\rightarrow U$ and $p_N^{-*}:U_{N}^-\rightarrow U$ are the inclusion maps. For a detailed justification of the decomposition \eqref{equnstab}-\eqref{eqstab}, see \cite{BT,RaymondThevenet2009}. 

Let us prove that $(A+\sigma,B)$ is stabilizable, if and only if, \eqref{UCstab} is satisfied.

The ``only if'' part is obtained by remarking that if for $j\in \{1,\dots,N\}$ there exists $\varepsilon(\overline{\lambda}_j)\in \ker(A^*-\overline{\lambda}_j)$ 
obeying $\varepsilon(\overline{\lambda}_j)\neq 0$ and $B^*\varepsilon(\overline{\lambda}_j)=0$, then multiplying \eqref{eq1.1} by $\varepsilon(\overline{\lambda}_j)$ yields 
that every solution $y$ of \eqref{eq1.1} satisfies: 
$$(y(t)|\varepsilon(\overline{\lambda}_j))_H=e^{\lambda_j t}(y(0)|\varepsilon(\overline{\lambda}_j))_H.$$
Since the above equality is independent on $u$ and since $\Re \lambda_j\geq -\sigma$, then \eqref{EstStab} is false for any initial datum not orthogonal to $\varepsilon(\overline{\lambda}_j)$.

Suppose now that \eqref{UCstab} is true and let us prove that $(A+\sigma,B)$ is stabilizable. For that, let us first verify that
\eqref{equnstab} is null controllable. Assumptions $(\mathcal{H}_1)$, $(\mathcal{H}_3)$ and $(\mathcal{H}_4)$ are obviously satisfied 
for $A_N$ (because $A_N$ is finite dimensional) and the fact that $A_N$ obeys $(\mathcal{H}_2)$ is a direct consequence of the definition of $H_N$ and of $A_N$ (the restriction of $A$ to $H_N$). 
Moreover, since the spectrum of $A_N$ is exactly $\Sigma_N$, then \eqref{UC} for $A_N$ is exactly \eqref{UCstab}. Then by Theorem 
\ref{MainTheorem} system \eqref{equnstab} is approximately controllable, and since null controllability and approximate 
controllability are equivalent notions for finite dimensional systems we deduce that \eqref{equnstab} is null controllable.

Then it follows that $(A_N+\sigma,B_N)$ is 
stabilizable: there exists $F_N\in \mathcal{L}(H_{N},U_N)$ such that the solution $\widehat y_N$ to \eqref{equnstab} with $u=F_N \widehat y_N$ satisfies for some $\epsilon>0$:
\begin{equation}
\|\widehat y_N(t)\|_{H}\leq C e^{-(\sigma+\epsilon) t}\|P_N y(0)\|_H\quad (t\geq 0).\label{expDecYN}
\end{equation}
For instance, the finite dimensional feedback law $F_N$ can be constructed with a Riccati operator obtained from a quadratic minimizing problem as in \cite{BT}.
Moreover the solution $\widehat y_N^-$ of \eqref{eqstab} with $u=F_N \widehat y_N$ is given by:
\begin{equation}\nonumber
 \begin{split}
\widehat y_N^-(t)&=e^{A_N^- t}(I-P_N)y(0)+\int_0^t e^{A_N^-(t-s)}B_N^- p_N^- F_N \widehat y_N{\rm d}s   ,\\
&=e^{A_N^- t}(I-P_N)y(0)+\int_0^t (\lambda_0-A)^{\gamma}e^{A_N^-(t-s)}(I-P_N)(\lambda_0-A)^{-\gamma}B F_N \widehat y_N{\rm d}s   .
 \end{split}
\end{equation}
The last above equality follows by remarking that $(\lambda_0-A_N^-)^{-\gamma}=(\lambda_0-A)^{-\gamma}(I-P_N)$. Then with $(\mathcal{H}_5)$, with the fact that \eqref{LNsigmaLN1} and $(\mathcal{H}_3)$ guarantee that for $\epsilon'>0$ such that $-(\sigma+\epsilon')>\Re \lambda_{N+1}$ we have 
$\|e^{A_N^- t}\|_{H_N^-}\leq C e^{-(\sigma+\epsilon') t}$ and $\|(\lambda_0-A_N^-)^{\gamma} e^{A_N^- (t-s)}\|_{H_N^-}\leq C (t-s)^{-\gamma} e^{-(\sigma+\epsilon') (t-s)}$, 
and with \eqref{expDecYN} we deduce:
\begin{equation}\label{eqyNmoinsstab}
\|\widehat y_N^-(t)\|_H\leq C\left(e^{-(\sigma+\epsilon') t}+e^{-(\sigma+\epsilon) t}\int_0^t \frac{e^{(\epsilon-\epsilon')(t-\tau) }}{(t-\tau)^\gamma}d\tau \right)\|y(0)\|_H\quad (t\geq 0).
\end{equation} 
Then if we choose $\epsilon<\epsilon'$ the above inequality means that the feedback control constructed from the unstable part of system \eqref{eq1.1} does not destabilize the stable part of 
system \eqref{eq1.1}. Then we have proved that the solution of \eqref{eq1.1} with $u=F_N P_N y$ obeys \eqref{EstStab} which is to say that $(A+\sigma,B)$ is stabilizable.

Finally, points 2, 3 and 4 are a direct consequence of Theorem \ref{MainTheorem} applied to the projected system \eqref{equnstab}.


\begin{Remark}
Note that $(\mathcal{H}_2)$ is not required in Theorem \ref{CorStab}. This comes from the fact that Theorem \ref{CorStab} is obtained by applying Theorem \ref{MainTheorem}
to the projection of system \eqref{eq1.1} onto the finite dimensional subspace generated by unstable root vectors of $A$ (i.e. the unstable subspace). Then by definition the family of unstable 
root vectors of $A$ is complete in the unstable subspace and $(\mathcal{H}_2)$ is always satisfied for the projected system.
\end{Remark}
\begin{Remark}
 From the applications point of view rank conditions \eqref{CondWk} are of great interest since they are practical criterions to construct
admissible families of stabilizing actuators. For instance, a choice for $v$ could be 
$$v=(B^*\varepsilon_k(\bar\lambda_j))_{\Re\lambda_j\geq -\sigma, k=1,\dots,\ell_j}.$$
Indeed, for such a $v$ each matrix $W_j$ contains the full rank block $(B^*\varepsilon_k(\bar\lambda_j), B^* \varepsilon_l(\bar\lambda_j))_{0\leq k,l\leq \ell_j}$.
\end{Remark}

\subsection{Stabilizability of nonlinear parabolic systems}\label{subsect3.2}
Here, by following the path sketched in \cite{BT} we recall how Theorem \ref{CorStab} can be used to prove the stabilizability of nonlinear systems:  
\begin{equation}
y'=A y+B u+\mathcal{N}(y,u),\label{eq1.1nl}
\end{equation}
where $\mathcal{N}(\cdot,\cdot)$ is a nonlinear mapping satisfying adequate Lipschitz properties recalled below. For the following we denote by
$F$ the bounded stabilizing feedback operator given by Theorem \ref{CorStab} and we define the closed loop operator $A_F\ov A+B F$ with domain $\mathcal{D}(A_F)\ov \{y\in H\mid A y+B F y\in H\}$. It is well known that the semigroup generated by $A_F$ is analytic on $H$ (see \cite[Prop. 10]{BT}). 
Moreover, we define:
\begin{equation} \label{HFalpha}
H_{F, \alpha}\ov \mathcal{D}((-A_F)^{\alpha})\quad \alpha\geq 0.
\end{equation} 
Here we assume hypothesis $(\mathcal{H}_4)$, from which we deduce $H_{F, \alpha}=[H,\mathcal{D}(A_F)]_\alpha$ for $\alpha\in [0,1]$ (see \cite{BT} for details).

Next, we suppose that $\mathcal{N}(\cdot,\cdot)$ obeys for $s\in [0,1]$:
\begin{equation}\label{HypNL}
 \begin{split}
  \|\mathcal{N}(\xi, F\xi)\|_{H_{\frac{s-1}{2}}}&\leq C \|\xi \|_{H_{F,\frac{s}{2}}}\|\xi \|_{H_{F,\frac{s+1}{2}}}\\
\|\mathcal{N}(\xi, F\xi)-\mathcal{N}(\zeta, F\zeta)\|_{H_{\frac{s-1}{2}}}&\leq C \big{(}\|\xi-\zeta \|_{H_{F,\frac{s}{2}}}(\|\xi \|_{H_{F, \frac{s+1}{2}}}+
\|\zeta \|_{H_{F, \frac{s+1}{2}}})\\
&\quad + \|\xi-\zeta \|_{H_{F, \frac{s+1}{2}}}
(\|\xi \|_{H_{F, \frac{s}{2}}}+\|\zeta \|_{H_{F, \frac{s}{2}}}\big{)}.
 \end{split}
\end{equation}

Finally, to state the stabilization theorems for \eqref{eq1.1nl} we need to introduce some spaces of Hilbert valued functions of $t\geq 0$. For two Hilbert spaces $\mathcal{X}$, $\mathcal{Y}$ we denote by 
$L^2(0,T;\mathcal{X})$, $L^\infty(0,T;\mathcal{X})$, $H^1(0,T;\mathcal{Y})$ usual Lebesgue and Sobolev spaces, we set
$W(0,T;\mathcal{X}, \mathcal{Y})\ov L^2(0,T;\mathcal{X})\cap H^1(0,T;\mathcal{Y})$ and $W(\mathcal{X}, \mathcal{Y})\ov W(0,+\infty;\mathcal{X}, \mathcal{Y})$, 
we denote by $L_{\rm loc}^2(\mathcal{X})$, $L_{\rm loc}^\infty(\mathcal{X})$ the spaces of functions belonging for all $T>0$ to $L^2(0,T;\mathcal{X})$, 
$L^\infty(0,T;\mathcal{X})$ respectively, and for $\sigma>0$ we denote 
by $W_{\sigma}(\mathcal{X}, \mathcal{Y})$ the space of functions $y$ such that $e^{\sigma(\cdot)}y\in W(\mathcal{X}, \mathcal{Y})$. Finally, for $s\in [0,1]$ we use the shorter expression:
$$
W_\sigma^s\ov W_{\sigma}(H_{F, \frac{s+1}{2}}, H_{\frac{s-1}{2}}).
$$
Then the following theorem can be obtained analogously to \cite[Thm. 15]{BT}.
\begin{Theorem}\label{ThmNL}
Assume $(\mathcal{H}_1)$, $(\mathcal{H}_3)$, $(\mathcal{H}_4)$, $(\mathcal{H}_5)$ and $(\mathcal{H}_6)$, for $\sigma>0$ assume \eqref{UCstab} and let $F$ be the finite rank feedback law given
by Theorem \ref{CorStab}. For $s\in [0,1]$ assume also that $\mathcal{N}$ satisfies \eqref{HypNL} and 
$y_0\in H_{F, \frac{s}{2}}$. There exist $\rho>0$ and $\mu>0$ such that if $\|y_0\|_{H_{F, \frac{s}{2}}}<\mu$ 
then system \eqref{eq1.1nl} with $y(0)=y_0$ admits a solution $y\in W_\sigma^s$ such that $\|y\|_{W_\sigma^s}\leq \rho \|y_0\|_{H_{F, \frac{s}{2}}}$, which is unique within the 
class of functions in $$L_{\rm loc}^\infty(H_{F, \frac{s}{2}})\cap L_{\rm loc}^2(H_{F, \frac{s+1}{2}}).$$ Moreover, there exists $C>0$ such that for all $t\geq 0$
\begin{equation}
\|y(t)\|_{H_{F, \frac{s}{2}}}\leq C  e^{-\sigma t} \|y_0\|_{H_{F, \frac{s}{2}}}.\label{expdecr}
\end{equation}
\end{Theorem}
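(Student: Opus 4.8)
The plan is to recast the closed-loop problem as a fixed point equation and to solve it by a contraction argument in the weighted space $W_\sigma^s$. Setting $u=Fy$ in \eqref{eq1.1nl} and using that $A_F=A+BF$ generates an analytic semigroup whose growth bound is strictly less than $-\sigma$ (this is exactly what Theorem \ref{CorStab} provides: $F$ stabilizes $(A+\sigma,B)$ with a margin $\epsilon>0$, so $\|e^{tA_F}\|_{\mathcal{L}(H)}\leq Ce^{-(\sigma+\epsilon)t}$), the mild formulation reads
\begin{equation}\label{plan-duhamel}
y(t)=e^{tA_F}y_0+\int_0^t e^{(t-s)A_F}\,\mathcal{N}(y(s),Fy(s))\,ds.
\end{equation}
I would define $\Lambda$ to be the map sending $y$ to the right-hand side of \eqref{plan-duhamel}, write $Lf(t)\ov\int_0^t e^{(t-s)A_F}f(s)\,ds$ for the Duhamel operator, and look for a fixed point of $\Lambda$ in a small closed ball of $W_\sigma^s$.

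First I would establish the linear ingredients, all resting on the analyticity and exponential decay of $(e^{tA_F})$: (i) the weighted maximal-regularity estimate $\|t\mapsto e^{tA_F}y_0\|_{W_\sigma^s}\leq C\|y_0\|_{H_{F,s/2}}$; (ii) the boundedness of $L$ from the weighted space $\{f:e^{\sigma(\cdot)}f\in L^2(0,\infty;H_{(s-1)/2})\}$ into $W_\sigma^s$; and (iii) the trace embedding $W_\sigma^s\hookrightarrow\{y:e^{\sigma(\cdot)}y\in L^\infty(0,\infty;H_{F,s/2})\}$, which holds because $s/2$ is the midpoint of the endpoint exponents $(s-1)/2$ and $(s+1)/2$ and $(\mathcal{H}_4)$ yields $H_{F,\alpha}=[H,\mathcal{D}(A_F)]_\alpha$. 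These are the standard analytic-semigroup estimates, transported to the exponentially weighted setting by the substitution $\widetilde y=e^{\sigma(\cdot)}y$, which conjugates $A_F$ into $A_F+\sigma$, still the generator of an analytic, exponentially stable semigroup.

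Next I would close the nonlinear estimate. Using the trace embedding (iii) the quantity $\|y(s)\|_{H_{F,s/2}}$ is bounded in time by $\|e^{\sigma(\cdot)}y\|_{L^\infty(H_{F,s/2})}$, while $e^{\sigma s}\|y(s)\|_{H_{F,(s+1)/2}}$ is square-integrable in time; multiplying the bilinear bound in \eqref{HypNL} by the weight $e^{\sigma s}$ and taking the $L^2$ norm in $s$ gives
\begin{equation}\label{plan-nl}
\left\|e^{\sigma(\cdot)}\,\mathcal{N}(y,Fy)\right\|_{L^2(0,\infty;H_{(s-1)/2})}\leq C\|y\|_{W_\sigma^s}^2,
\end{equation}
and the Lipschitz bound in \eqref{HypNL} yields, in the same way, $\|e^{\sigma(\cdot)}(\mathcal{N}(y,Fy)-\mathcal{N}(z,Fz))\|_{L^2(0,\infty;H_{(s-1)/2})}\leq C(\|y\|_{W_\sigma^s}+\|z\|_{W_\sigma^s})\|y-z\|_{W_\sigma^s}$. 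Combining with (i)--(ii), the map $\Lambda$ obeys $\|\Lambda(y)\|_{W_\sigma^s}\leq C_0\|y_0\|_{H_{F,s/2}}+C_1\|y\|_{W_\sigma^s}^2$ and $\|\Lambda(y)-\Lambda(z)\|_{W_\sigma^s}\leq C_1(\|y\|_{W_\sigma^s}+\|z\|_{W_\sigma^s})\|y-z\|_{W_\sigma^s}$. Choosing $\rho$ with $2C_1\rho<1$ and then $\mu$ with $C_0\mu+C_1\rho^2\leq\rho$, the map $\Lambda$ sends the closed ball of radius $\rho$ into itself and is a contraction there, so the Banach fixed point theorem produces the unique solution $y$ in that ball. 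Feeding $y=\Lambda(y)$ back into the bound gives $\|y\|_{W_\sigma^s}\leq 2C_0\|y_0\|_{H_{F,s/2}}$, the asserted estimate; the decay \eqref{expdecr} is then immediate from (iii), which says $\|y(t)\|_{H_{F,s/2}}\leq e^{-\sigma t}\|e^{\sigma(\cdot)}y\|_{L^\infty(H_{F,s/2})}\leq Ce^{-\sigma t}\|y_0\|_{H_{F,s/2}}$. Uniqueness within the larger class $L^\infty_{\rm loc}(H_{F,s/2})\cap L^2_{\rm loc}(H_{F,(s+1)/2})$ would follow from a local-in-time version of the same estimates together with a Gronwall argument on finite intervals.

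The step I expect to be the main obstacle is the rigorous justification of the weighted maximal-regularity and trace estimates (i)--(iii) for $A_F$ rather than for $A$. The subtlety is that $A_F$ is only a relatively bounded perturbation of $A$, so one must verify that the fractional-power scales $H_{F,\alpha}$ coincide, up to equivalent norms, with the original scales $H_\alpha$ over the relevant range of exponents, that $H_{(s-1)/2}$ is indeed the correct space in which $\mathcal{N}$ takes values, and that the margin $\epsilon>0$ from Theorem \ref{CorStab} genuinely renders $A_F+\sigma$ exponentially stable so that the weighted norms are finite. Once these function-space facts are in place---and they are precisely those worked out for the same scale in \cite[Thm. 15]{BT}---the contraction argument is routine.
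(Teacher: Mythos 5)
Your proposal is correct and follows essentially the same route as the paper, which gives no proof of its own but refers to \cite[Thm.~15]{BT}, where the result is obtained by exactly this kind of fixed point argument: Duhamel formula for the closed-loop generator $A_F$, weighted maximal regularity and trace estimates in $W_\sigma^s$, the bilinear bounds from \eqref{HypNL}, and a contraction in a small ball. The function-space issues you flag at the end (identification of $H_{F,\alpha}$ via $(\mathcal{H}_4)$ and the exponential stability margin from Theorem \ref{CorStab}) are precisely the points the paper delegates to \cite{BT}, so nothing is missing.
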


The main difficulty to apply Theorem \ref{ThmNL} in concrete examples is that one has to identify $H_{F, \alpha}$ for $\alpha\in [0,1]$. Indeed, 
assumption \eqref{HypNL} is usually obtained from
boundedness property of $\mathcal{N}(\cdot,\cdot)$ in original spaces that are not related to $F$. Moreover, Theorem \ref{ThmNL} is only valid for $y_0\in H_{F, \frac{s}{2}}$  that can
be too restrictive, see \cite{BADRASICON, BadraCOCV}.
 
An alternative to avoid such a difficulty and to obtain an exponential decrease in the norm of $H_{\frac{s}{2}}$ is to use a dynamical control. 
Consider a control function of the form:
\begin{equation}
 u(t)=\sum_{j=1}^K u_j(t)v_j,\quad \overline{u}=(u_1,\dots, u_K)\in \mathbb{C}^K\label{eqbaru}
\end{equation}
where $(v_j)$ is an admissible family for \eqref{eq1.1}.   
 First, we recall the notation \eqref{eq1.2}
$$
V\overline{u}\ov \sum_{j=1}^K u_j B v_j,
$$
so that system \eqref{eq1.1}, \eqref{eqbaru} with $\overline{g}\ov \overline{u}'$
can be rewritten as 
the extended system:
\begin{equation}\label{ExtSyst}
 \begin{split}
 \begin{bmatrix}
        y\\
      \overline{u}
       \end{bmatrix}'=\mathbb{A} \begin{bmatrix}
        y\\
      \overline{u}
       \end{bmatrix}+\mathbb{V} \overline{g},\quad \mbox{ where }\;\mathbb{A}\ov \begin{bmatrix}
        A & V \\
      0 & 0
       \end{bmatrix}, \quad \mathbb{V}\ov \begin{bmatrix}
        0\\
       {\rm Id}
       \end{bmatrix}.
\end{split}
\end{equation}
In the above system, $\overline{g}$ plays the role of the control. To study this system, we introduce for $\theta\in [0,1]$ the following spaces
$$
\mathbb{H}_\theta\ov \{(y,\overline{u}) \in H\times \mathbb{C}^K \mid y+(\mu_0-A)^{-1}V \overline{u}\in H_\theta\}.
$$
We verify that $\mathbb{H}_1$ is the domain of $\mathbb{A}$ and that if $(A,V)$ satisfies 
$(\mathcal{H}_1)$, $(\mathcal{H}_3)$, $(\mathcal{H}_5)$ and $(\mathcal{H}_6)$ then $(\mathbb{A},\mathbb{V})$ satisfies 
$(\mathcal{H}_1)$, $(\mathcal{H}_3)$, $(\mathcal{H}_5)$ and $(\mathcal{H}_6)$ with $\gamma=0$. Moreover, it is easily seen that if  
$(A,V)$  satisfies \eqref{UC} then so does $(\mathbb{A},\mathbb{V})$, which means that if $v$ is admissible for stabilizability of $(A,B)$ then 
\eqref{ExtSyst} is stabilizable, see \cite{BT} for details.
Then from Theorem \ref{CorStab} we have 
the existence of a linear finite rank operator $F:\mathbb{C}^K\times H\to \mathbb{C}^K\times \mathbb{C}^K$ such that the solutions of \eqref{eq1.1}, \eqref{eqbaru}
with $\overline{u}$ satisfying 
\begin{equation}
\overline{u}'= F(\overline{u},y),\label{couplagebaru0}
\end{equation}
obey
$$
\|y(t)\|_H+|\overline{u}(t)|\leq C(\|y(0)\|_H+|\overline{u}(0)|)e^{-\sigma t},
$$
where $|\cdot|$ denotes the euclidian norm of $\mathbb{C}^K$. Note that \eqref{couplagebaru0} can be written as 
\begin{equation}\label{couplagebaru}
\overline{u}'+\Lambda \overline{u}=\sum_{j=1}^K e_j(\widehat \varepsilon _j| y(t))_H,
\end{equation}
where $\Lambda$ is a complex matrix of size $K\times K$ representing the first component of $F$, where $\{\widehat \varepsilon _j\in H_N\,;\,j=1,\dots, K\}$ is a family of kernels functions which represent the second component of $F$ and 
where $\{e_j\,;\,j=1,\dots, K\}$ denotes a basis of $\mathbb{C}^K$.

Finally, we suppose that for $s\in [0,1]$:
\begin{equation}\label{HypNL2}
 \begin{split}
  \|\mathcal{N}(\xi, u)\|_{H_{\frac{s-1}{2}}}&\leq C \|(\xi,\overline{u})\|_{\mathbb{H}_{\frac{s}{2}}}\|(\xi,\overline{u})\|_{\mathbb{H}_{\frac{s+1}{2}}}\\
\|\mathcal{N}(\xi, u)-\mathcal{N}(\zeta, w)\|_{H_{\frac{s-1}{2}}}&\leq C 
\big{(}\|(\xi,\overline{u})-(\zeta,\overline{w})\|_{\mathbb{H}_{\frac{s}{2}}}(\|(\xi,\overline{u})\|_{\mathbb{H}_{\frac{s+1}{2}}}+\|(\zeta,\overline{w})\|_{\mathbb{H}_{\frac{s+1}{2}}})\\
&\quad\quad\quad \|(\xi,\overline{u})-(\zeta,\overline{w})\|_{\mathbb{H}_{\frac{s+1}{2}}}(\|(\xi,\overline{u})\|_{\mathbb{H}_{\frac{s}{2}}}
+\|(\zeta,\overline{w})\|_{\mathbb{H}_{\frac{s}{2}}})\big{)},\\
 \end{split}
\end{equation}
where we have used the notations:
$$
u=\sum_{j=1}^K u_jv_j,\quad w(t)=\sum_{j=1}^K w_jv_j,\quad \overline{u}=(u_1,\dots, u_K), \quad \overline{w}=(w_1,\dots, w_K).
$$
For $s\in [0,1]$ we define
$$
\mathbb{W}_\sigma^s=\left \{(y,\overline{u})\mid (e^{\sigma (\cdot)}y, e^{\sigma (\cdot)}\overline{u})\in 
L^2(\mathbb{H}_{\frac{s+1}{2}})\cap  H^1(H_{\frac{s-1}{2}} \times \mathbb{C}^K)\right \}.
$$
Then the following theorem can be obtained analogously to \cite[Thm. 18]{BT}.
\begin{Theorem}\label{ThmNL2}
  Assume that $(A,V)$ satisfies $(\mathcal{H}_1)$, $(\mathcal{H}_3)$, $(\mathcal{H}_4)$, $(\mathcal{H}_5)$, $(\mathcal{H}_6)$ and \eqref{UCstab}  for $\sigma>0$.
Let $K\in \mathbb{N}^*$ be given by Theorem \ref{CorStab}. For $s\in [0,1]$ assume also \eqref{HypNL2} and 
$y_0\in H_{\frac{s}{2}}$. There exist $\rho>0$ and $\mu>0$ such that if $\|y_0\|_{H_{\frac{s}{2}}}<\mu$ 
then system \eqref{eq1.1nl}-\eqref{eqbaru}-\eqref{couplagebaru} with $y(0)=y_0$ and $\overline{u}(0)=0$ admits a solution $(y,\overline{u})\in \mathbb{W}_\sigma^s$ 
satisfying
$\|(y,\overline{u})\|_{\mathbb{W}_\sigma^s}\leq \rho \|y_0\|_{H_{\frac{s}{2}}}$, which is unique within the class of functions in 
$L_{\rm loc}^\infty(\mathbb{H}_{\frac{s}{2}})\cap L_{\rm loc}^2(\mathbb{H}_{\frac{s+1}{2}})$. Moreover, 
there exists $C>0$ such that for 
all $t\geq 0$
$$
\|(y(t),\overline{u}(t))\|_{\mathbb{H}_\frac{s}{2}}\leq Ce^{-\sigma t} \|y_0\|_{H_\frac{s}{2}}.
$$
\end{Theorem}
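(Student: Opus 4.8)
The plan is to recast the closed-loop problem as a fixed-point equation on the weighted space $\mathbb{W}_\sigma^s$ and to apply a contraction argument, exactly as for \cite[Thm.~18]{BT}. Let $F$ be the feedback of Theorem \ref{CorStab} associated to $(\mathbb{A},\mathbb{V})$ and set $\mathbb{A}_F\ov \mathbb{A}+\mathbb{V}F$. By the discussion preceding the statement, $(\mathbb{A},\mathbb{V})$ satisfies $(\mathcal{H}_1)$, $(\mathcal{H}_3)$, $(\mathcal{H}_5)$, $(\mathcal{H}_6)$ with $\gamma=0$ and \eqref{UCstab}, so Theorem \ref{CorStab} yields that $\mathbb{A}_F$ generates an analytic semigroup on $\mathbb{H}\ov \mathbb{H}_0=H\times\mathbb{C}^K$ whose spectrum lies in $\{\Re\lambda<-\sigma\}$. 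Writing the closed loop of \eqref{eq1.1nl}-\eqref{eqbaru}-\eqref{couplagebaru} as
\begin{equation}
\begin{bmatrix} y \\ \overline{u}\end{bmatrix}' = \mathbb{A}_F \begin{bmatrix} y \\ \overline{u}\end{bmatrix} + \begin{bmatrix}\mathcal{N}(y,u) \\ 0\end{bmatrix},\qquad (y(0),\overline{u}(0))=(y_0,0),\nonumber
\end{equation}
the nonlinearity enters only through the first component as a source term.

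First I would establish the linear estimate. Passing to $z\ov e^{\sigma(\cdot)}(y,\overline{u})$ turns the equation into one driven by $\mathbb{A}_F+\sigma$, whose spectrum lies in a half-plane $\{\Re\lambda<-\epsilon\}$, so that $\mathbb{A}_F+\sigma$ generates an exponentially decaying analytic semigroup on $\mathbb{H}$. Using $(\mathcal{H}_4)$ to identify the fractional-power scale $\mathbb{H}_\theta$ attached to $\mathbb{A}$ (equivalently to $\mathbb{A}_F$, by the analytic perturbation argument recalled after \eqref{HFalpha}) and the maximal $L^2$-regularity of analytic semigroups, I would show that the map sending a source $f$ and a datum $y_0$ to the solution is bounded from $L^2_\sigma(H_{\frac{s-1}{2}})\times H_{\frac{s}{2}}$ into $\mathbb{W}_\sigma^s$. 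In particular this produces, after removing the weight, the trace embedding $\mathbb{W}_\sigma^s\hookrightarrow C_b([0,+\infty);\mathbb{H}_{\frac{s}{2}})$, since $\mathbb{H}_{\frac{s}{2}}$ is the trace space of $\mathbb{W}_\sigma^s$.

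Next I would control the nonlinearity. Defining $\mathcal{T}(y,\overline{u})$ as the solution furnished by the linear estimate with source $(\mathcal{N}(y,u),0)$ and datum $(y_0,0)$, I would apply the first bound of \eqref{HypNL2} pointwise in $t$, integrate against $e^{2\sigma t}$, and split the two factors as $L^\infty$ and $L^2$ in time; the embedding $\mathbb{W}_\sigma^s\hookrightarrow L^\infty_\sigma(\mathbb{H}_{\frac{s}{2}})$ then yields
\begin{equation}
\|\mathcal{N}(y,u)\|_{L^2_\sigma(H_{\frac{s-1}{2}})}\leq C\,\|(y,\overline{u})\|_{\mathbb{W}_\sigma^s}^2,\nonumber
\end{equation}
while the second bound of \eqref{HypNL2} gives the matching Lipschitz estimate for $\mathcal{N}(y,u)-\mathcal{N}(\zeta,w)$ with the quadratic factor replaced by the sum of the two norms. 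Composing with the bounded linear solution operator, $\mathcal{T}$ satisfies $\|\mathcal{T}(y,\overline{u})\|_{\mathbb{W}_\sigma^s}\leq C_0\|y_0\|_{H_{\frac{s}{2}}}+C_1\|(y,\overline{u})\|_{\mathbb{W}_\sigma^s}^2$ together with the analogous contraction estimate on differences.

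Finally I would run the fixed-point argument: taking $\rho=2C_0$ and $\mu$ small enough that $C_1\rho^2\mu\leq\tfrac12$ and the contraction constant is strictly less than $1$ on the ball $\{\|(y,\overline{u})\|_{\mathbb{W}_\sigma^s}\leq\rho\|y_0\|_{H_{\frac{s}{2}}}\}$, the Banach fixed-point theorem produces the unique solution $(y,\overline{u})\in\mathbb{W}_\sigma^s$ with the stated bound. The exponential decay follows from $\mathbb{W}_\sigma^s\hookrightarrow L^\infty_\sigma(\mathbb{H}_{\frac{s}{2}})$, and uniqueness within $L^2_{\rm loc}(\mathbb{H}_{\frac{s+1}{2}})\cap L_{\rm loc}^\infty(\mathbb{H}_{\frac{s}{2}})$ follows from a Gronwall argument on finite intervals, where the weight plays no role. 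I expect the main obstacle to be the linear maximal-regularity step on the extended space: one must verify that the scale $\mathbb{H}_\theta$ really coincides with the interpolation scale entering $\mathbb{W}_\sigma^s$ and its trace space $\mathbb{H}_{\frac{s}{2}}$, so that De~Simon-type $L^2$-regularity and the trace embedding apply verbatim — this is precisely where $(\mathcal{H}_4)$ and the analyticity of the closed-loop semigroup are used.
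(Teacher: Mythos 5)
Your proposal is correct and follows essentially the same route as the paper, which (after setting up the extended system \eqref{ExtSyst}, checking that $(\mathbb{A},\mathbb{V})$ inherits the hypotheses with $\gamma=0$, and invoking Theorem \ref{CorStab} for the closed-loop generator $\mathbb{A}+\mathbb{V}F$) simply defers the nonlinear step to the fixed-point argument of \cite[Thm.~18]{BT}: weighted maximal $L^2$-regularity for the exponentially stable analytic semigroup on the scale $\mathbb{H}_\theta$, the quadratic estimate on $\mathcal{N}$ from \eqref{HypNL2} via the $L^\infty$--$L^2$ splitting, and a contraction on a small ball of $\mathbb{W}_\sigma^s$. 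You correctly identify the one point requiring care, namely that $(\mathcal{H}_4)$ is what identifies $\mathbb{H}_{\frac{s}{2}}$ with the trace/interpolation space of $\mathbb{W}_\sigma^s$, so nothing is missing.
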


\subsection{Stabilization of real systems}\label{SectStabreal}\label{subsect3.3}
We also have a real version of Theorem 
\ref{CorStab} stated in Theorem \ref{CorStabReal} below. If $A$ and $B$ are real operators defined on real Hilbert spaces $G$ and $W$ respectively 
the pair $(A,B)$ is referred as a \textit{real pair} and the related 
definitions of stabilizability introduced in Section \ref{sec-intro} are the same as above by replacing the complex spaces $H$ and $U$ by the real spaces $G$ and $W$. 
In such case, a (real) $K$-dimensional feedback law is of the form
\begin{equation}\label{FeedbackSumReal}
F y(t)= \sum_{j=1}^K (y(t), \widehat \chi_j)_G v_j,
\end{equation}
where $\widehat \chi_j\in G$, $j=1,\dots, K$.

To state a real version of Theorem \ref{CorStab}, we need to introduce the following subspace of $W$: 
$$
\mathcal{F}_\sigma \ov \Span_{\mathbb{R}} \{\chi_k(\bar\lambda_j) \mid \Re\lambda_j\geq -\sigma,\;k=1,\dots,\ell_j\},
$$
where family $(\chi_k(\bar\lambda_j))$ is defined by \eqref{DefChi}. The following corollary of Theorem \ref{MainTheoremReal} holds.
\begin{Theorem}\label{CorStabReal}
Suppose that $A$ and $B$ are real operators defined from respective real Hilbert spaces $G$ and $W$, 
assume that the complexified of $A$ and $B$ satisfy $(\mathcal{H}_1)$, $(\mathcal{H}_3)$, $(\mathcal{H}_5)$ and $(\mathcal{H}_6)$ and let $K\in \mathbb{N}^*$ and $\sigma>0$. 
 Then the following results hold.
\begin{enumerate}
\item The real pair $(A+\sigma, B)$ is stabilizable if and only if \eqref{UCstab} holds.
\item The real pair $(A+\sigma, B)$ is stabilizable by a $K$-dimensional control if and only if \eqref{UCstab} and \eqref{CondKsigma} are satisfied.
\item A family $v=(v_j)_{j=1,\dots,K}$ of $W^K$ is admissible for stabilizability  of $(A+\sigma, B)$ if and only if \eqref{CondWk} is satisfied.
\item Assume that \eqref{UCstab} and \eqref{CondKsigma} are true.  Then the set of admissible families for stabilizability  of $(A+\sigma, B)$ forms a residual set of $W^K$. 
Moreover, if $v$ is admissible for stabilizability  of $(A+\sigma, B)$ then its orthogonal projection onto $(B^*\mathcal{F}_\sigma)^K$ is admissible for stabilizability
 of $(A+\sigma, B)$.
\end{enumerate}
 \end{Theorem}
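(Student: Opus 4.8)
The plan is to reduce Theorem \ref{CorStabReal} to the real controllability result Theorem \ref{MainTheoremReal} applied to the finite-dimensional system governing the unstable modes, exactly as Theorem \ref{CorStab} was deduced from Theorem \ref{MainTheorem}. First I would reuse the spectral splitting from the proof of Theorem \ref{CorStab}: choose $N$ as in \eqref{LNsigmaLN1}, set $\Sigma_N=\{\lambda_j\mid\Re\lambda_j\geq-\sigma\}$, and introduce the projection $P_N$ onto the unstable subspace $H_N=P_N H$. The crucial new observation in the real setting is that, since $A$ is a real operator, its spectrum is symmetric with respect to the real axis, so $\Sigma_N$ is invariant under complex conjugation; consequently $H_N$ is stable under conjugation and is the complexification $H_N=G_N+\imath G_N$ of the real finite-dimensional subspace $G_N=H_N\cap G$. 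Thus the projected equation \eqref{equnstab} is genuinely a real finite-dimensional system, whose complexification has spectrum exactly $\Sigma_N$ and for which \eqref{UC} reads exactly as \eqref{UCstab}.

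For point 1, the ``only if'' direction is obtained exactly as in the proof of Theorem \ref{CorStab}, working on the complexified system: if some $\varepsilon(\bar\lambda_j)\neq 0$ with $\Re\lambda_j\geq-\sigma$ satisfies $A^*\varepsilon(\bar\lambda_j)=\bar\lambda_j\varepsilon(\bar\lambda_j)$ and $B^*\varepsilon(\bar\lambda_j)=0$, then $(y(t),\varepsilon(\bar\lambda_j))_H=e^{\lambda_j t}(y(0),\varepsilon(\bar\lambda_j))_H$ regardless of the control, so \eqref{EstStab} fails for data not orthogonal to $\varepsilon(\bar\lambda_j)$ (equivalently, to one of its real or imaginary parts $\chi_k(\bar\lambda_j)$). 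For the ``if'' direction, assuming \eqref{UCstab}, Theorem \ref{MainTheoremReal} applied to the real projected system gives its approximate controllability; since in finite dimension approximate controllability coincides with null controllability, the unstable part is stabilizable by a real finite-rank feedback $F_N$. The argument that $u=F_N P_N y$ does not destabilize the stable part carries over verbatim from the estimates \eqref{expDecYN}--\eqref{eqyNmoinsstab}, since those only use $(\mathcal{H}_3)$, $(\mathcal{H}_5)$ and the spectral gap and are insensitive to the real or complex nature of the spaces.

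Points 2, 3 and 4 are then a direct translation of the corresponding points of Theorem \ref{MainTheoremReal} to the projected real system. The geometric multiplicities of the complexified projected operator are precisely the $\ell_j$ for $\Re\lambda_j\geq-\sigma$, so \eqref{CondKsigma} is the real $K$-dimensional controllability condition \eqref{CondK} for the projected system, and the rank condition \eqref{CondWk} is exactly \eqref{RankCond0} restricted to the finitely many unstable modes. The residual-set statement and the admissibility of the orthogonal projection onto $(B^*\mathcal{F}_\sigma)^K$ follow from the conclusions of Theorem \ref{MainTheoremReal}, once one identifies $\mathcal{F}_\sigma$ with the real span of the vectors $\chi_k(\bar\lambda_j)$ from \eqref{DefChi} restricted to the unstable modes, i.e.\ the analogue of $\mathcal{F}$ for the projected system.

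The step I expect to require the most care is the verification that the unstable subspace descends to a genuine real finite-dimensional system, namely that $H_N=G_N+\imath G_N$ with $G_N=H_N\cap G$, and that the data entering the statement (the adjoint eigenvectors $\varepsilon_k(\bar\lambda_j)$, their real and imaginary parts $\chi_k(\bar\lambda_j)$, and the images $B^*\chi_k(\bar\lambda_j)$) coincide with the controllability data of this projected real system. Once this identification is in place, the theorem becomes a mechanical combination of the splitting argument of Theorem \ref{CorStab} with the real controllability result Theorem \ref{MainTheoremReal}.
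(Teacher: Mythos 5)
Your proposal is correct and follows exactly the route the paper intends: the paper states Theorem \ref{CorStabReal} without proof, simply as a corollary of Theorem \ref{MainTheoremReal} obtained via the same spectral splitting used to derive Theorem \ref{CorStab} from Theorem \ref{MainTheorem}. Your elaboration of the one point the paper leaves implicit --- that the unstable spectral subspace is conjugation-invariant (the spectrum of a real operator being symmetric about the real axis), so the projected system is a genuine real finite-dimensional system --- is exactly the verification needed.
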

\begin{Remark}\label{RemReal}
If $F$ is the bounded stabilizing feedback operator \eqref{FeedbackSumReal} given by Theorem \ref{CorStabReal} then in a way similar as for \eqref{HFalpha} 
we can define $G_{F, \alpha}$. Then the  analogue of Theorem \ref{ThmNL} and Theorem \ref{ThmNL2} hold for 
the real closed-loop nonlinear system   by replacing the complex spaces $\mathbb{C}^K$, $H_{F, \alpha}$, $H_{F, \frac{s}{2}}$, $H_{\frac{s}{2}}$, etc. by the 
real spaces $\mathbb{R}^K$, 
$G_{F, \alpha}$, $G_{F, \frac{s}{2}}$, $G_{\frac{s}{2}}$, etc.
\end{Remark}
\begin{Remark}
Note that in \cite{RaymondThevenet2009} the authors choose the whole family of real and imaginary parts of generalized eigenvectors as a stabilizing family. 
However, according to rank criterion \eqref{CondWk} it is sufficient to choose the family of real and imaginary parts of proper eigenvectors. The family $v$ given below is admissible 
for stabilizability of $(A+\sigma,B)$:
$$v=(B^* \chi_k(\bar\lambda_j))_{\Re\lambda_j\geq -\sigma, k=1,\dots,\ell_j}.$$
Indeed, if $j\in J_0$ the above matrix contains the full rank block $(B^*\varepsilon_k(\bar\lambda_j), B^* \varepsilon_l(\bar\lambda_j))_{0\leq k,l\leq \ell_j}$,
and if $j\notin J_0$ the matrix $W_j$ contains the $\ell_j$-rank block $[R_j,I_j]$ 
where 
$$R_j= ( B^*\Re \varepsilon_k(\bar\lambda_j), B^* \varepsilon_l(\bar\lambda_j))_{0\leq k,l\leq \ell_j}\quad \mbox{ and }\quad 
I_j= ( B^*\Im \varepsilon_k(\bar\lambda_j), B^* \varepsilon_l(\bar\lambda_j))_{0\leq k,l\leq \ell_j}.$$
This last claim comes from the fact that eigenvectors associated to eigenvalues $\{\overline{\lambda}_j\mid \Re\lambda_j\geq -\sigma\}$ are pairwise conjugate.
\end{Remark}
\subsection{Minimal number of actuators for the heat equation in a rectangular domain}\label{subsect3.4}

Consider the $d$--dimensional controlled heat equation in a rectangle $\Omega=\prod_{i=1}^d (0,c_i)$ for $d\geq 2$:
\begin{equation}
\left\{
\begin{array}{ll}
\displaystyle y_t=\Delta y&\displaystyle \mbox{ in }(0,T)\times \Omega,\\
\displaystyle y={\bf 1}_{\Gamma} h&\displaystyle \mbox{ on }(0,T)\times \partial\Omega,\\
\displaystyle y(0)=y_0\;&\displaystyle \mbox{ in } \Omega.\\
\end{array}
\right.\label{EXeq3}
\end{equation}
In the above setting, ${\bf 1}_{\Gamma}$ is the characteristic function of a non empty open subset $\Gamma\subset \{0\}\times \prod_{i=2}^d (0,c_i)$ 
and $h\in L^2((0,T)\times \partial\Omega)$ is the control function. 

The eigenvalues of the Dirichlet Laplacian are given by
$$
\lambda_\alpha=-\pi^2\sum_{i=1}^d \left(\frac{\alpha_i}{c_i}\right)^2,\quad \alpha=(\alpha_1,\dots,\alpha_d)\in\mathbb{N}^{*d},
$$
with related eigenvectors
$$
\varphi_{\alpha}(x)=\prod_{i=1}^d\sin\left(\frac{\pi }{c_i}\alpha_i x_i\right),\quad x=(x_1,\dots,x_d)\in \Omega.
$$
It is classical that controlled system \eqref{EXeq3} can be written in the form \eqref{eq1.1} for linear operator $A$, $B$ satisfying   
$(\mathcal{H}_1)$, $(\mathcal{H}_2)$, 
$(\mathcal{H}_3)$, $(\mathcal{H}_4)$ and $(\mathcal{H}_5)$ with $\gamma>\frac{3}{4}$. For this last, we refer to \cite[Chapter 3, Section 3.1]{L-TlBook2000Vol1} as well as references therein. Moreover, \eqref{UC} reduces to
$$
\left\{
\begin{array}{rl}
\lambda \varphi-\Delta\varphi&=0\; \mbox{ in }\; \Omega,\\
  \varphi&=0 \; \mbox{ on }\; \partial\Omega,\\
\frac{\partial \varphi}{\partial x_1}&= 0\; \mbox{ on }\; \Gamma,
\end{array}
\right.\quad\Longrightarrow \quad \varphi=0 \; \mbox{ in }\; \Omega,
$$
which is an easy consequence of Holmgren's uniqueness Theorem (by using an extension of the domain procedure). As a consequence,
\eqref{EXeq3} is approximately controllable and stabilizable by finite dimensional control. A remaining question is how many actuators are required for 
approximate controllability or stabilizability?
\par Suppose that $1/c_i^2$, $i=1,\dots,d$ are $\mathbb{Q}$-linearly independent. Then a straightforward calculation shows that the mapping 
$\alpha\mapsto \lambda_\alpha$ is one-to-one and then that the spectrum is simple (note that it is well-known that the spectrum of the Dirichlet-Laplace operator is generically 
simple with respect to the domain see \cite{Micheletti1972}). It implies that \eqref{EXeq3} is approximately controllable, as well as stabilizable for all rate $\sigma>0$, 
with a one dimensional controller. 
\par Suppose now that $c_i=\pi/c$, $i=1,\dots,d$ for $c>0$. Then the mapping $\alpha\mapsto \lambda_\alpha=-(\pi/c)^2|\alpha|^2$, $|\alpha|^2\ov \sum_{i=1}^d\alpha_i^2$, is no longer one-to-one. 
This means that \eqref{EXeq3} is no longer approximately controllable with a one dimensional controller. In fact, the sequence of geometric multiplicities 
$$m_\alpha^d=\sharp\{\beta\in \mathbb{N}^{*d} \mid \lambda_{\beta}=\lambda_{\alpha}\},\quad \alpha\in \mathbb{N}^{*d},$$
is unbounded. This follows from the fact that $m_\alpha^d=r_d(|\alpha|^2)\geq r_2(\alpha_1^2+\alpha_2^2)$ where $r_d(n)$ denotes the total number of the representation of 
$n$ as a sum of $d$ square of positive integers, and that for instance $r_2(5^{2p})=p+1$ for $p\in \mathbb{N}^*$, see \cite[Thm. 278]{HardyWright}. 
It means that the geometric 
multiplicities of the Dirichlet Laplacian in the square are not bounded and that the approximate controllability with finite dimensional controllers is not possible. 
However, for $\sigma>0$ one can prove that the maximum of the geometric multiplicities $m_\alpha^d$ corresponding to eigenvalues $\lambda_\alpha \geq -\sigma$ is bounded by 
$\frac{(\sqrt{\pi})^{d-1}}{(2c)^{d-1}\Gamma(\frac{d+1}{2})}\sigma^{\frac{d-1}{2}}$ and then 
\eqref{EXeq3} is stabilizable for a rate $\sigma>0$ by means of a $K^\sigma$-dimensional control with 
$$K^\sigma=\left\lfloor \frac{(\sqrt{\pi})^{d-1}}{(2c)^{d-1}\Gamma(\frac{d+1}{2})}\sigma^{\frac{d-1}{2}} \right \rfloor,$$ 
where $\lfloor x \rfloor $ denotes the integer part of $x$.  In order to prove the bound on the maximum of the geometric multiplicities, we note that $r_d(n)$ is also the number of tuples of positive integers which are on the $d$-sphere of ray 
$\sqrt{n}$ we deduce first that $r_d(n)$ is bounded above by the number of tuples of positive integers 
in the $d-1$ ball of ray $\sqrt{n}$ and thus that $r_d(n)$ is bounded above by the volume occupied by the points of the $d-1$ ball of ray $\sqrt{n}$ which have positive coordinates: 
$r_d(n)\leq \frac{\pi^{\frac{d-1}{2}}}{2^{d-1}\Gamma(\frac{d+1}{2})}n^{\frac{d-1}{2}}$. 
Then the conclusion follows from:
$$
\max_{\lambda_\alpha\geq -\sigma} m_{\alpha}^d=\max_{|\alpha|^2 \leq \sigma/c^2} r_d(|\alpha|^2).
$$
Finally, let us underline that the strategy consisting in choosing as many controllers as the number of modes corresponding to eigenvalues greater than 
$-\sigma $ (which is the strategy of \cite{RaymondThevenet2009}) would lead to a number of controllers 
$K_u^\sigma=\lfloor \frac{(\sqrt{\pi})^d}{(2c)^{d}\Gamma(\frac{d}{2}+1)}\sigma^{\frac{d}{2}}\rfloor $, and that 
the ratio $K_u^\sigma/K^\sigma$ behaves as $\frac{(d+1)\sqrt{\pi}\sqrt{\sigma}}{4c}$ as $\sigma\to \infty$.

\section{Stabilizability of incompressible Navier-Stokes type systems}\label{sect5}
  In this section, we illustrate the interest of Theorem \ref{CorStabReal} by applying it to some Navier-Stokes type systems. More precisely, we show how a unique continuation results for stationary Stokes type system yields a result for the stabilization of a nonlinear Navier-Stokes type system. The case of Navier-Stokes system was obtained in \cite{BT}, and here we deal with the feedback stabilizability of a magnetohydrodynamic system and of a micropolar fluid system. The unique continuation theorems (to check criterion \eqref{UC}) are stated in the Appendix.

\paragraph{Notation.}   In what follows, $d=2$ or $d=3$ and for a nonempty open subset $\mathcal{D}$ of $\mathbb{R}^d$ we denote $L^2(\mathcal{D};\mathbb{R})$, $L^2(\mathcal{D};\mathbb{C})$, 
$H^1(\mathcal{D};\mathbb{R})$, $H^1(\mathcal{D};\mathbb{C})$, etc the usual Lebesgue and Sobolev spaces of functions with values in $\mathbb{R}$ or $\mathbb{C}$.
For a scalar function $\pi$ or a vector field $z=\,^t(z_1,\dots,z_d)$ ($\,^t$ denotes the transpose) we consider the classical notation: $\nabla  \pi=\,^t(\partial_{x_1}\pi,\dots,\partial_{x_d}\pi)$, 
$\nabla z=(\partial_{x_j} z_i)_{1\leq i,j\leq d}$, $\,^t\nabla z=(\partial_{x_i} z_j)_{1\leq i,j\leq d}$ and we also set $D^s z\ov \nabla z+\,^t\nabla z$ and $D^a z\ov \nabla z-\,^t\nabla z$.
 We recall that the divergence of $z$ is defined by 
${\rm div\,} z=\sum_{j=1}^d \partial_{x_j} z_j$ and the curl of $z$ or $\pi$ is defined by
$$
{\rm curl\,} z=\partial_{x_1}z_{2}-\partial_{x_2}z_{1}\quad\mbox{ and }\quad {\rm curl\,} \pi= \vct{\partial_{x_2}\pi}{-\partial_{x_1}\pi}\quad\mbox{ if $d=2$},
$$
and
$$
{\rm curl\,} z=\vctt{\displaystyle \partial_{x_2}z_{3}-\partial_{x_3}z_{2}}{\displaystyle \partial_{x_3}z_{1}-\partial_{x_1}z_{3}}{\displaystyle \partial_{x_1}z_{2}-\partial_{x_2}z_{1}}\quad\mbox{ if $d=3$}.
$$

\subsection{Stabilizability of MHD system}\label{stabMHD}
\par
Let $\Omega$ be a bounded open subset of $\mathbb{R}^3$ of class $C^{2,1}$ and consider a stationary solution $(w^S,\theta^S,r^S)$ of the following MHD system: 
\begin{equation}\label{eqMHDstat}
 \left\{
\begin{split}
 -\Delta w^S+(w^S\cdot \nabla) w^S-({\rm curl\, \theta^S})\times \theta^S+\nabla r^S&=f^S&\quad \mbox{ in }\Omega,\\
 {\rm curl}({\rm curl}\,\theta^S)-{\rm curl}(w^S\times \theta^S) &=0&\quad \mbox{ in }\Omega,\\
{\rm div}\, w^S={\rm div}\, \theta^S&=0&\quad \mbox{ in }\Omega.
\end{split}
\right.
\end{equation}
Here $w^S(x)\in \mathbb{R}^3$ represents the velocity of the fluid, $p^S(x)\in \mathbb{R}$ is the pressure, $\theta^S(x)$ is the magnetic field, $f^S(x)$ is a given 
stationary body force, $n$ denotes the unit exterior normal vector field defined on $\partial\Omega$ and $\times$ denotes the vector product. Let also underline that in 
\eqref{eqMHDstat}, usual non-dimensional constants that characterize the flow (Hartmann number, interaction parameter and magnetic Reynolds number) 
are supposed to be equal to one for simplicity. Concerning the well-posedness of system \eqref{eqMHDstat} we refer to \cite{Meir1993}. 
In the following, $w^S$ and $\theta^S$ are supposed to be smooth enough: $w^S\in (H^2(\Omega;\mathbb{R}))^3$ and
$\theta^S\in (H^2(\Omega;\mathbb{R}))^3$.

Our aim is to stabilize around a given solution of \eqref{eqMHDstat} by means of a distributed control localized in an open subset $\omega\subset\subset \Omega$. More precisely, 
for $(y_0,\vartheta_0)\in (L^2(\Omega;\mathbb{R}))^3\times (L^2(\Omega;\mathbb{R}))^3$ satisfying
\begin{equation}
 {\rm div}\, y_0={\rm div\,\vartheta_0}=0\; \mbox{ in }\Omega\quad \mbox{ and }\quad y_0\cdot n= \vartheta_0\cdot n=0\; \mbox{ on }\partial\Omega,\label{CCini}
\end{equation}
consider the following instationary MHD system
\begin{equation}\label{eqMHD}
 \left\{
\begin{split}
 w_t-\Delta w+(w\cdot \nabla) w-({\rm curl\, \theta})\times \theta+\nabla r&=f^S+{\bf 1}_\omega u^1 &\quad \mbox{ in }Q,\\
 \theta_t+{\rm curl}({\rm curl}\,\theta)-{\rm curl}(w\times \theta) &={\bf 1}_\omega P_\omega u^2 &\quad \mbox{ in }Q,\\
{\rm div}\, w={\rm div}\, \theta&=0&\quad \mbox{ in }Q,\\
w=w^S,\quad \theta\cdot n&=\theta^S\cdot n&\quad \mbox{ on }\Sigma,\\
({\rm curl}\,\theta-w\times \theta)\times n&=({\rm curl}\,\theta^S-w^S\times \theta^S)\times n&\quad \mbox{ on }\Sigma,\\
w(0)=w^S+y_0,\quad \theta(0)&=\theta^S+\vartheta_0&\quad \mbox{ in }\Omega,
\end{split}
\right.
\end{equation}
where $Q\ov (0,+\infty)\times \Omega$ and $\Sigma\ov (0,+\infty)\times  \partial\Omega$. Here ${\bf 1}_\omega$ is the extension operator defined on $(L^2(\omega))^3$ 
by ${\bf 1}_\omega(y)(x)=y(x)$ if $x\in \omega$ and ${\bf 1}_\omega(y)(x)=0$ else, $u=(u^1, u^2)$ is a control function in 
$(L^2((0,T)\times \omega;\mathbb{R}))^3\times (L^2((0,T)\times \omega;\mathbb{R}))^3$ and 
$P_\omega$ is the classical Helmholtz projector related to $\omega$ (i.e the orthogonal projection operator from  $(L^2(\omega;\mathbb{R}))^3$ onto 
the completion of $\{v\in (C_0^\infty(\omega;\mathbb{R}))^3\mid {\rm div\,} v=0\mbox{ in }\omega\}$ for the norm of $(L^2(\omega;\mathbb{R}))^3$). Note that here $ \Tilde u^2=P_\omega u^2$  satisfies \eqref{Propu2}. This guarantees that the right hand side of the magnetic field equation is divergence free and then that it is compatible with the left hand side. This follows from the fact that 
the orthogonal complement of the range of $P_\omega$ is composed with $\nabla p\in (L^2(\omega;\mathbb{R}))^3$ for $p\in H^1_{\rm loc}(\omega;\mathbb{R})$ 
(see \cite[Chap. III]{Galdi}), which implies 
$$
\forall p\in (H_0^{1}(\Omega))^3\quad \langle {\rm div \,} {\bf 1}_\omega P_\omega u^2 \,,\, p \rangle_{(H^{-1}(\Omega))^3, (H_0^{1}(\Omega))^3}=
-\int_\Omega {\bf 1}_\omega P_\omega u^2\cdot \nabla p \,{\rm d}x   =- \int_\omega P_\omega u^2\cdot \nabla p\, {\rm d}x   =0.
$$

The idea of using such a type of control for the magnetic field equation is due to \cite{Lefter2010} for the two dimensional version of system \eqref{eqMHD}. 
If $\Omega$ is supposed to be only bounded in directions $x_1$, $x_2$ and invariant in direction $x_3$, 
if all functions only depend on $x_1$, $x_2$ and if each vector field has its 
third component equal to zero (and then is identified to its two first components i.e $w(x)=\,^t(w_1(x_1,x_2), w_2(x_1,x_2), 0)\equiv \,^t(w_1(x_1,x_2), w_2(x_1,x_2))$ etc), 
then we can identify $\Omega$ to its bounded two dimensional section and \eqref{eqMHD} reduces to: 
\begin{equation}\label{eqMHD2D}
 \left\{
\begin{split}
 w_t-\Delta w+(w\cdot \nabla) w-({\rm curl\, \theta}) \theta^\perp+\nabla r&=f^S+{\bf 1}_\omega u^1 &\mbox{ in }Q,\\
 \theta_t+{\rm curl}({\rm curl}\,\theta)+{\rm curl}(w \cdot \theta^\perp) &={\bf 1}_\omega P_\omega u^2 &\mbox{ in }Q,\\
{\rm div}\, w={\rm div}\, \theta&=0 &\mbox{ in }Q,\\
w=w^S,\; \theta\cdot n&=\theta^S\cdot n &\mbox{ on }\Sigma\\
{\rm curl}\, \theta+w\cdot \theta^\perp&={\rm curl}\, \theta^S+w^S\cdot \theta^{S\perp} &\mbox{ on }\Sigma\\
w(0)=w^S+y_0,\quad \theta(0)&=\theta^S+\vartheta_0&\quad \mbox{ in }\Omega,
\end{split}
\right.
\end{equation}
where $(y_0,\vartheta_0)\in (L^2(\Omega;\mathbb{R}))^2\times (L^2(\Omega;\mathbb{R}))^2$ satisfies \eqref{CCini} and $u^1$, $u^2$ both belong to $(L^2((0,T)\times \omega;\mathbb{R}))^2$.
We have used the notation $\,^t(a_1,a_2)^\perp=\,^t(-a_2,a_1)$. 
In \cite{Lefter2010} the stabilizability of \eqref{eqMHD2D} is obtained for a stationary pair $(w^S,\theta^S)\in (W^{2,\infty}(\Omega))^2\times (W^{2,\infty}(\Omega))^2$ 
satisfying homogeneous boundary conditions and for $\Omega$ simply connected.   The main step of the proof in \cite{Lefter2010} consists in checking 
 the approximate controllability criterion \eqref{AppConttau} related to the linear non stationary adjoint system associated to \eqref{eqMHD2D}.

Here we extend such a stabilizability result to the three dimensional case and for stationary state only $H^2$.
More precisely, we show the existence of finite dimensional control functions $u^1$, $u^2$ in feedback form, such that for all $(y_0,\vartheta_0)$ sufficiently small 
the solution $(w,\theta)$ of \eqref{eqMHD} (or of \eqref{eqMHD2D}) satisfies $(w(t),\theta(t)) \to (w^S,\theta^S)$ as $t\to \infty$, see Theorem \ref{StabMHD2D} below. 
Note that our proof relies on a uniqueness theorem for a stationary system which is simpler to handle (we prove \eqref{UC} instead of \eqref{AppConttau}). 
Moreover, we will see that assuming $\Omega$ to be simply connected is essential for the stabilizability of the linear system obtained from \eqref{eqMHD} (or 
\eqref{eqMHD2D}) by linearizing around $(w^S,\theta^S)$, see Remark \ref{RkmultCon} below.

Our strategy consists in first rewriting \eqref{eqMHD} in the abstract form \eqref{eq1.1nl} with $A$, $B$, satisfying 
$(\mathcal{H}_1)$, $(\mathcal{H}_3)$, $(\mathcal{H}_5)$, $(\mathcal{H}_6)$; second proving Fattorini's criterion $\eqref{UC}$ by using Corollary \ref{CorMHD} of the Appendix with the fact that 
$\Omega$ is simply connected;
third applying Theorem \ref{ThmNL} (see Remark \ref{SectStabreal}). Note that is the strategy sketched in \cite{BT} for the Navier-Stokes system and 
for which $\eqref{UC}$ is obtained from the uniqueness result for the Oseen system recalled in Theorem \ref{UCOseen}. 
For sake of clarity, we only detail the calculations in the three dimensional case. Adaptation to the 2D case is straightforward and is left to the reader.
\medskip

{\bf Step 1. Abstract reformulation.}  With the following formulas of vectorial analysis:
$$
({\rm curl}\, a)\times a=(a\cdot \nabla) a -\nabla(|a|^2/2)\quad \mbox{ and }\quad {\rm curl}(a\times b)=(b\cdot \nabla) a-(a\cdot \nabla) b+({\rm div\,b })a-({\rm div\,a })b
$$
we first deduce that $(y,\vartheta, p)=(w-w^S,\theta-\theta^S, r-r^S+(|\theta|^2-|\theta^S|^2)/2)$ obeys
\begin{equation}\label{eqMHD2}
 \left\{
\begin{split}
 y_t-\Delta y+(w^S\cdot \nabla) y+(y\cdot \nabla) w^S-(\theta^S\cdot \nabla \vartheta)-(\vartheta\cdot \nabla \theta^S)+\nabla p&=(\vartheta\cdot \nabla \vartheta)-(y\cdot \nabla y)+{\bf 1}_\omega u^1&\; \mbox{ in }Q,\\
 \vartheta_t+{\rm curl}({\rm curl}\,\vartheta)-{\rm curl}(y\times \theta^S)-{\rm curl}(w^S\times \vartheta) &=(\vartheta\cdot \nabla) y-(y\cdot \nabla) \vartheta+{\bf 1}_\omega P_\omega\, u^2&\; \mbox{ in }Q,\\
{\rm div}\, y={\rm div}\, \vartheta&=0&\; \mbox{ in }Q,\\
y(0)=y_0,\quad \vartheta(0)&=\vartheta_0&\; \mbox{ in }\Omega,
\end{split}
\right.
\end{equation}
with boundary conditions
$$
y=0,\quad \vartheta\cdot n=0,\quad ({\rm curl}\, \vartheta-w^S\times \vartheta)\times n=0\quad \mbox{ on }\Sigma.
$$
Note that with $(w^S\times \vartheta)\times n=(w^S\cdot n)\vartheta-(\vartheta \cdot n)w^S$ and $\vartheta \cdot n=0$, the above boundary conditions become
\begin{equation}
y=0,\quad  \vartheta\cdot n=0,\quad ({\rm curl}\, \vartheta)\times n-(w^S\cdot n)\vartheta=0 \quad \mbox{ on }\Sigma.\label{BDCOND}
\end{equation}
Next, we introduce the (real) spaces
\begin{equation}\label{DefH}
G\ov {\bf V}_n^0(\Omega)\times {\bf V}_n^0(\Omega)\quad \mbox{ where }\quad{\bf V}_n^0(\Omega)\ov \{y\in (L^2(\Omega;\mathbb{R}))^3\mid {\rm div}\, y=0 \mbox{ in }\Omega,\; y\cdot n=0\;\mbox{ on }\partial\Omega\}.
\end{equation}
  We denote by $P$ the Helmholtz projection operator related to $\Omega$ (i.e the orthogonal projection operator from $(L^2(\Omega;\mathbb{R}))^3$ onto ${\bf V}_n^0(\Omega)$) and 
we define the following linear operator $A:\mathcal{D}(A)\subset G \to G$ by 
\begin{equation}\nonumber
\begin{split}
 A\begin{bmatrix}y\\ \vartheta \end{bmatrix}&=
\begin{bmatrix}
\displaystyle P\left(\Delta y-(w^S\cdot\nabla)y-(y\cdot\nabla)w^S+(\theta^S\cdot\nabla)\vartheta+(\vartheta\cdot\nabla)\theta^S\right)\\
\displaystyle -{\rm curl}({\rm curl}\,\vartheta)+{\rm curl}(w^S\times \vartheta)+{\rm curl}(y\times \theta^S)
\end{bmatrix}\\
\mathcal{D}(A)&=\big{\{}(y,\vartheta)\in (H^2(\Omega;\mathbb{R}))^3\times (H^2(\Omega;\mathbb{R}))^3\mid 
{\rm div}\, y={\rm div}\, \vartheta=0\; \mbox{ in }\Omega, \\
&\quad\qquad\quad y=0,\;\;  \vartheta\cdot n=0\;\;\mbox{ and }\; ({\rm curl}\, \vartheta\times n-
(w^S\cdot n)\vartheta)=0\mbox{ on }\partial\Omega \big{\}}.
\end{split}
\end{equation}
Note that $P$ does not appear in the second component of $A\,^t[y, \vartheta ]$ because ${\rm curl}({\rm curl}\,\vartheta-w^S\times \vartheta-y\times \theta^S)$ belongs to 
${\bf V}_n^0(\Omega)$. In particular, the fact that its normal component is zero on the boundary comes from $y=({\rm curl}\, \vartheta-w^S\times \vartheta)\times n=0$
on $\partial\Omega$  with the following calculation for all $q\in H^2(\Omega)$ and a density argument :
\begin{equation}\nonumber
 \begin{split}
 \int_{\partial\Omega}{\rm curl}({\rm curl}\,\vartheta-w^S\times \vartheta-y\times \theta^S)\cdot n q{\rm d\sigma }&=\int_\Omega {\rm curl}({\rm curl}\,\vartheta-w^S\times \vartheta-y\times \theta^S)\cdot\nabla q {\rm d}x   \\
&=\int_{\partial\Omega}({\rm curl}\,\vartheta-w^S\times \vartheta-y\times \theta^S)\times n \cdot \nabla q{\rm d\sigma }=0.
\end{split}
\end{equation}
Since $\Omega$ and $(w^S,\theta^S)$ are regular enough, with analogous arguments as in \cite{BT,BADRA-DCDS-A2011} we verify 
that $A$ generates an analytic semigroup and
that it has compact resolvent. It implies that $({\mathcal H}_1)$, $({\mathcal H}_3)$ and $({\mathcal H}_5)$ are satisfied. 
Moreover, we can verify that the adjoint of $A$ is given by:
\begin{equation}\label{DAstar} 
\begin{split} 
 A^*\begin{bmatrix}z\\\rho \end{bmatrix}&=
\begin{bmatrix}
\displaystyle P(\Delta z+(D^s z) w^S-(D^a \rho) \theta^S)\\
\displaystyle P(\Delta \rho-(D^s z) \theta^S+(D^a \rho) w^S)
\end{bmatrix}\\
\mathcal{D}(A^*)&=\big{\{}(z,\rho)\in (H^2(\Omega;\mathbb{R}))^3\times (H^2(\Omega;\mathbb{R}))^3\mid {\rm div}\, z={\rm div}\, \rho=0\; \mbox{ in }\Omega,\\  
&\qquad\qquad\qquad\qquad \qquad z=0\mbox{ on }\partial\Omega, \rho\cdot n=({\rm curl}\, \rho)\times n=0\mbox{ on }\partial\Omega \big{\}}.
\end{split}
\end{equation}
Moreover, if we define $B:W\to G$ as follows 
\begin{equation}\label{inputB}
W\ov (L^2(\omega;\mathbb{R}))^3\times (L^2(\omega;\mathbb{R}))^3,\quad B\begin{bmatrix}u^1\\ u^2\end{bmatrix} \ov \begin{bmatrix}P {\bf 1}_\omega u^1\\ {\bf 1}_\omega P_\omega\, u^2 \end{bmatrix},
\end{equation}
then $(\mathcal{H}_5)$ (for $\gamma=0$) is satisfied. 
Finally, \eqref{eqMHD2}-\eqref{BDCOND} can be rewritten in the form \eqref{eq1.1nl} with:
\begin{equation}\label{NLMHD}
\mathcal{N}\left(\begin{bmatrix}y \\ \vartheta \end{bmatrix},\begin{bmatrix}u^1 \\ u^2 \end{bmatrix} \right)
\ov \begin{bmatrix}P ((\vartheta\cdot \nabla \vartheta)-(y\cdot \nabla y)) \\ (\vartheta\cdot \nabla) y-(y\cdot \nabla) \vartheta \end{bmatrix}.
\end{equation}
\medskip

{\bf Step 2. Verification of Fattorini's criterion.} Since $A$ and $B$ satisfy $(\mathcal{H}_1)$, $(\mathcal{H}_3)$, $(\mathcal{H}_5)$ and $(\mathcal{H}_6)$, Theorem \ref{CorStabReal} applies and stabilizability of $(A+\sigma,B)$ for all $\sigma>0$
is reduced to \eqref{UC}, which is to say that for all $\lambda\in \mathbb{C}$, every $(z,\pi,\rho, \kappa )$ that satisfies:
\begin{equation}\label{MHDadj0}
 \left\{
\begin{split}
 \lambda  z-\Delta z-(D^s z) w^S+(D^a \rho) \theta^S +\nabla \pi &=0 &\quad \mbox{ in }\Omega,\\
 \lambda  \rho-\Delta \rho+(D^s z) \theta^S-(D^a \rho) w^S+\nabla \kappa &=0&\quad \mbox{ in }\Omega,\\
{\rm div}\, z={\rm div}\, \rho&=0&\quad \mbox{ in }\Omega,
\end{split}
\right.
\end{equation}
with boundary data
\begin{equation}\label{bdydataMHD}
 z=0,\quad  \rho\cdot n=0,\quad ({\rm curl}\, \rho)\times n=0\quad \mbox{ on }\partial\Omega,
\end{equation}
and such that
\begin{equation}
z=0,\quad P_\omega\rho=0\quad  {\rm in }\;\; \omega,\label{Nullbord00}
\end{equation}
must be identically equal to zero in $\Omega$. Note that \eqref{Nullbord00} is equivalent to 
\begin{equation}
z=0,\quad \rho=\nabla p\quad  {\rm in }\;\; \omega\quad \mbox{ for some $p\in H^1_{\rm loc}(\omega;\mathbb{R})$.}\label{Nullbord0}
\end{equation}
Then \eqref{Nullbord0} implies $z={\rm curl\,}\rho=0$ in $\omega$
and from Corollary \ref{CorMHD} of the Appendix, the fact that $\Omega$ is simply connected guarantees that the above uniqueness result is true.

\medskip

{\bf Step 3. Stabilizing control for linear and nonlinear systems.}    From Theorem \ref{CorStabReal}, for all $\sigma>0$, there exists $K>0$ and there exist families 
\begin{equation}
((v_j^1,v_j^2))_{j=1,\dots,K}\in (B^*\mathcal{F}_\sigma)^K\subset W^K\quad \mbox{ and }\quad ((\widehat z_j,\widehat \rho_j))_{j=1,\dots,K}\in G^K\label{Families}
\end{equation}
and a finite rank feedback operator $F: G\to W$ defined by
\begin{equation}\label{FeedbackF}
F \begin{bmatrix} \xi \\ \zeta \end{bmatrix}= \sum_{j=1}^K \begin{bmatrix} v_j^1 \\ v_j^2 \end{bmatrix}\int_\Omega \left (\xi\cdot \widehat z_j+\zeta \cdot \widehat \rho_j\right ){\rm d} x ,
\end{equation}
such that $A+BF$ with domain $\mathcal{D}(A+BF)=\mathcal{D}(A)$ (since $B$ is bounded) is the infinitesimal generator af an exponentially stable semigroup on $G$. 
Moreover, as it happens for the Navier-Stokes system treated in \cite{BT,BADRASICON}, Sobolev embeddings 
guarantee that   the nonlinearity $\mathcal{N}$ defined by \eqref{NLMHD} satisfies \eqref{HypNL} only for $s\in [\frac{d-2}{2}, 1]$. This yields the following stabilization theorem for
system \eqref{eqMHD} with feedback  control
\begin{equation}\label{ControlFeed}
 \begin{split}
  u^1(t)&=\sum_{j=1}^K v_j^1 \int_\Omega \left ((w(t)-w^S)\cdot \widehat z_j+(\theta(t)-\theta^S)\cdot  \widehat \rho_j\right ){\rm d} x , \\
u^2(t)&=\sum_{j=1}^K v_j^2 \int_\Omega \left ((w(t)-w^S)\cdot \widehat z_j+(\theta(t)-\theta^S)\cdot  \widehat \rho_j\right ){\rm d} x .
 \end{split}
\end{equation}
\begin{Theorem}\label{StabMHD2D}
 Let $s\in [\frac{d-2}{2}, 1]\backslash\{\frac{1}{2}\}$, let $\Omega$ be a bounded and simply-connected open subset of $\mathbb{R}^d$ of class $C^{2,1}$, let $(w^S,\theta^S)\in (H^2(\Omega;\mathbb{R}))^d\times (H^2(\Omega;\mathbb{R}))^d$ 
satisfy \eqref{eqMHDstat} and let $(y_0,\vartheta_0)\in (H_0^s(\Omega;\mathbb{R}))^d\times (H^s(\Omega;\mathbb{R}))^d$ 
satisfy \eqref{CCini}.

For all $\sigma>0$ there exists $\mu>0$ such that if
$$\|y_0\|_{(H_0^{s}(\Omega;\mathbb{R}))^d}+\|\vartheta_0\|_{(H^s(\Omega;\mathbb{R}))^d}\leq \mu,$$
then system \eqref{eqMHD}, \eqref{ControlFeed} if $d=3$ or system \eqref{eqMHD2D}, \eqref{ControlFeed} if $d=2$ admits a solution $(w,r,\theta)$ in
\begin{equation}\nonumber
\begin{split}
\{(w^S,r^S,\theta^S)\}+W_\sigma((H^{s+1}(\Omega;\mathbb{R}))^d, (H^{s-1}(\Omega;\mathbb{R}))^d) \times H^{\frac{s-1}{2}}(L^{2}(\Omega;\mathbb{R})/{\mathbb R})\times 
W_\sigma((H^{s+1}(\Omega;\mathbb{R}))^d, (H^{s-1}(\Omega;\mathbb{R}))^d)
\end{split}
\end{equation}
which is unique within the class of function in
\begin{equation}\nonumber
\begin{split}
\{(w^S,r^S,\theta^S)\}+ W_{{\rm loc}}((H^{s+1}(\Omega;\mathbb{R}))^d, (H^{s-1}(\Omega;\mathbb{R}))^d) \times H_{{\rm loc}}^{\frac{s-1}{2}}(L^{2}(\Omega;\mathbb{R})/{\mathbb R})\times 
W_{{\rm loc}}((H^{s+1}(\Omega;\mathbb{R}))^d, (H^{s-1}(\Omega;\mathbb{R}))^d).
\end{split}
\end{equation}
Moreover, there exists $C>0$ such that for all $t\geq 0$ the following estimate holds:
\begin{equation}\nonumber
\begin{split}
\|w(t)-w^S\|_{(H_0^{s}(\Omega;\mathbb{R}))^d}+\|\theta(t)-\theta^S\|_{(H^{s}(\Omega;\mathbb{R}))^d} 
\leq C e^{-\sigma t}(\|y_0\|_{(H_0^{s}(\Omega;\mathbb{R}))^d}+\|\vartheta_0\|_{(H^{s}(\Omega;\mathbb{R}))^d}).
\end{split}
\end{equation}
\end{Theorem}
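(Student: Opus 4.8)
The plan is to follow the three-step strategy already laid out above: cast the translated system \eqref{eqMHD2}--\eqref{BDCOND} in the abstract form \eqref{eq1.1nl}, verify Fattorini's criterion \eqref{UC}, and then invoke the abstract nonlinear stabilization result of Theorem \ref{ThmNL} in its real version (Remark \ref{RemReal}). Steps 1 and 2 already furnish the operators $A$, $B$ and the nonlinearity $\mathcal{N}$ of \eqref{NLMHD}, together with the facts that $(A,B)$ satisfies $(\mathcal{H}_1)$, $(\mathcal{H}_3)$, $(\mathcal{H}_5)$ (for $\gamma=0$) and $(\mathcal{H}_6)$, and that \eqref{UC} holds by Corollary \ref{CorMHD} and the simple-connectedness of $\Omega$. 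It thus remains to feed this data into the abstract machinery and to translate the conclusion back to the physical unknowns.

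First I would apply Theorem \ref{CorStabReal}. Since $(A,B)$ obeys $(\mathcal{H}_1)$, $(\mathcal{H}_3)$, $(\mathcal{H}_5)$, $(\mathcal{H}_6)$ and \eqref{UC}, hence \eqref{UCstab} for every $\sigma>0$, there exist an integer $K$, families as in \eqref{Families}, and a finite rank real feedback $F$ of the form \eqref{FeedbackF} for which $A+BF$ generates an exponentially stable analytic semigroup on $G$ with decay rate $\sigma$. In the original unknowns this feedback is precisely \eqref{ControlFeed}.

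The substantive step is the verification of \eqref{HypNL} for $\mathcal{N}$, which is what licenses Theorem \ref{ThmNL}. Here I would first identify the fractional-power spaces $G_{F,\alpha}=\mathcal{D}((-A_F)^\alpha)$: by $(\mathcal{H}_4)$ one has $G_{F,\alpha}=[G,\mathcal{D}(A_F)]_\alpha$, and since $B$ is bounded and $F$ has finite rank the closed-loop domain equals $\mathcal{D}(A)$, so $G_{F,\alpha}$ is the usual interpolation scale built on the divergence-free $H^{2\alpha}$-type spaces carrying the boundary conditions \eqref{BDCOND}. With these identifications the bilinear terms of $\mathcal{N}$---namely $(\vartheta\cdot\nabla\vartheta)$, $(y\cdot\nabla y)$, $(\vartheta\cdot\nabla)y$ and $(y\cdot\nabla)\vartheta$---are controlled exactly as the Navier--Stokes transport term in \cite{BT,BADRASICON}, via H\"older's inequality and the Sobolev embeddings $H^{(s+1)/2}\hookrightarrow L^{p}$, $H^{s/2}\hookrightarrow L^{q}$ in dimension $d$. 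This is exactly where the range $s\in[\tfrac{d-2}{2},1]$ comes from, the value $s=\tfrac12$ being excluded to avoid the borderline trace/interpolation case. Once \eqref{HypNL} is secured, the real analogue of Theorem \ref{ThmNL} applies to \eqref{eq1.1nl} with initial datum $(y_0,\vartheta_0)\in G_{F,s/2}$: there exist $\mu,\rho>0$ so that for small data the closed-loop system has a unique solution in the real analogue of $W_\sigma^s$ decaying like $e^{-\sigma t}$ in the $G_{F,s/2}$-norm. I would then unwind the change of unknowns $(y,\vartheta)=(w-w^S,\theta-\theta^S)$, recover the pressure $r$ from \eqref{eqMHD2} by de Rham's theorem, and check that the abstract spaces correspond to the stated product Sobolev spaces, the identification of $G_{F,s/2}$ with $(H_0^s(\Omega;\mathbb{R}))^d\times(H^s(\Omega;\mathbb{R}))^d$ following from $(\mathcal{H}_4)$ and elliptic regularity for the Stokes-type operator $A$.

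The hard part will be this nonlinearity estimate \eqref{HypNL}, and more precisely the identification of the fractional-power domains $G_{F,\alpha}$ with concrete Sobolev spaces that respect the mixed boundary conditions \eqref{BDCOND} for the magnetic field; only with that identification can the Sobolev embeddings be used to close the bilinear bounds and pin down the admissible range of $s$.
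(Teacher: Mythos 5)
Your proposal follows essentially the same route as the paper: the three-step reduction (abstract reformulation, verification of \eqref{UC} via Corollary \ref{CorMHD} and simple-connectedness, construction of the finite rank feedback from Theorem \ref{CorStabReal}), followed by checking \eqref{HypNL} for the bilinear nonlinearity \eqref{NLMHD} through Sobolev embeddings as in the Navier--Stokes case and invoking the real version of Theorem \ref{ThmNL}. The points you flag as the substantive work --- the identification of $G_{F,\alpha}$ (here unproblematic since $B$ is bounded, so $\mathcal{D}(A_F)=\mathcal{D}(A)$) and the restriction $s\in[\tfrac{d-2}{2},1]\setminus\{\tfrac12\}$ coming from the embedding/trace borderline --- are exactly the ones the paper handles by reference to \cite{BT,BADRASICON}.
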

\begin{Remark}
To extend the above theorem to $s=1/2$ the initial datum $y_0$ must be chosen in $(H_{00}^{1/2}(\Omega;\mathbb{R}))^d$ where $H_{00}^{1/2}(\Omega;\mathbb{R})$ stands for the subspace of $H_{0}^{1/2}(\Omega)$ composed with $y$ such that $\int_{\Omega}{\rm dist}(x,\partial \Omega)^{-1}|y|^2 \ {\rm d}x<+\infty$.
\end{Remark}
\begin{Remark}\label{RkmultCon}
According to Corollary \ref{CorMHD} the uniqueness result stated in step 2 is true if and only if $\Omega$ is simply connected. When 
$\Omega$ is multiply connected and $\lambda=0$, for each nonzero $\Tilde \rho\in X_N$ (defined by \eqref{defX} in appendix) we have that $(z,\pi,\rho,\kappa)\ov (0,0,\rho_N,0)$
satisfies \eqref{MHDadj0}-\eqref{bdydataMHD}-\eqref{Nullbord00}. 
Then there is a $N$-dimensional subspace of eigenvectors related to the zero eigenvalue which is uncontrollable and the linear system obtained from \eqref{eqMHD} 
by linearizing around $(w^S,\theta^S)$ is not stabilizable. {A way to obtain a stabilizability result in the multiply connected case is to impose on $\vartheta_0$ some zero flux conditions through cuts that joint the connected components of $\partial\Omega$, see \cite{BADRA-MHD-SUB-3}. }
\end{Remark}
\subsection{Stabilizability of micropolar system}\label{stabMicro}
The methodology described in the last section can also be used to stabilize the following 3D micropolar system:
\begin{equation}\label{eqMicrop}
 \left\{
\begin{split}
 w_t-\Delta w+(w\cdot \nabla) w-{\rm curl}\, \theta+\nabla r &=f^S&\quad \mbox{ in }Q,\\
 \theta_t-\Delta \theta+(w\cdot \nabla) \theta-{\rm curl}\, w-\nabla ({\rm div}\, \theta)&=g^S&\quad \mbox{ in }Q,\\
{\rm div}\, w&=0&\quad \mbox{ in }Q,\\
w(0)=w^S+y_0,\quad \theta(0)&=\theta^S+\vartheta_0&\quad \mbox{ in }\Omega.
\end{split}
\right.
\end{equation}
About micropolar systems and related control problems 
see for instance \cite{Lukaszewicz,Guerrero2007} and references therein. Here $\Omega$ is a bounded open subset of $\mathbb{R}^3$
of class $C^{2,1}$, $(w^S,\theta^S)\in (H^2(\Omega;\mathbb{R}))^3\times (H^2(\Omega;\mathbb{R}))^3$ verifies
\begin{equation}\label{eqMicroStat}
 \left\{
\begin{split}
 -\Delta w^S+(w^S\cdot \nabla) w^S-{\rm curl}\, \theta^S+\nabla r^S &=f^S&\quad \mbox{ in }\Omega,\\
 -\Delta \theta^S+(w^S\cdot \nabla) \theta^S-{\rm curl}\, w^S-\nabla ({\rm div}\, \theta^S)&=g^S&\quad \mbox{ in }\Omega,\\
{\rm div}\, w^S&=0&\quad \mbox{ in }\Omega,
\end{split}
\right.
\end{equation}
 and $(y_0,\vartheta_0)\in (L^2(\Omega;\mathbb{R}))^3\times (L^2(\Omega;\mathbb{R}))^3$ satisfies 
\begin{equation}\label{MicroCCini}
{\rm div\, }y_0=0\; \mbox{ in }\Omega\quad \mbox{ and }\quad y_0\cdot n=0\;\mbox{ on }\partial \Omega.
\end{equation}
We want to find a control function $u=(u^1,u^2)$ such that for all $(y_0,\vartheta_0)$ sufficiently small 
the solution $(w,\theta)$ of \eqref{eqMicrop} with nonhomegeneous boundary data of the form
\begin{equation}\label{bdycont}
w=w^S+m u^1\quad\mbox{ and }\quad  \theta=\theta^S+m u^2 \quad \mbox{ on }\Sigma
\end{equation}
are such that $(w(t),\theta(t)) \to (w^S,\theta^S)$ as $t\to \infty$. Here, $m\in C^2(\partial \Omega;\mathbb{R})$ is a non zero cut-off function that allows to 
localize the action of the control $(u^1,u^2)$ on a subset of $\partial\Omega$ and for all $t\geq 0$: 
\begin{equation}\label{DefU}
u^1(t)\in {\bf V}_m^0(\partial\Omega)\ov 
\left\{v\in (L^2(\partial\Omega;\mathbb{R}))^3\mid \int_{\partial\Omega} m v\cdot n{\rm d\sigma }=0\right\},\quad u^2(t)\in (L^2(\partial\Omega;\mathbb{R}))^3.
\end{equation}
For that, we follow the same strategy as in Subsection \ref{stabMHD}: first we rewrite \eqref{eqMicrop} in the abstract form \eqref{eq1.1nl} for $A$, $B$, satisfying 
$(\mathcal{H}_1)$, $(\mathcal{H}_3)$, $(\mathcal{H}_5)$, $(\mathcal{H}_6)$; second we prove Fattorini's criterion $\eqref{UC}$ by using Theorem \ref{UCMicro} of the Appendix;
third we apply Theorem \ref{ThmNL2} (see Remark \ref{SectStabreal}). 
\medskip

{\bf Step 1. Abstract reformulation.}  We first werify that $(y,\vartheta, p)=(w-w^S,\theta-\theta^S, r-r^S)$ obeys
\begin{equation}\label{eqMicro2}
 \left\{
\begin{split}
 y_t-\Delta y+(w^S\cdot \nabla) y+(y\cdot \nabla) w^S-{\rm curl}\, \vartheta+\nabla p&=-(y\cdot \nabla) y&\quad \mbox{ in }Q,\\
 \vartheta_t-\Delta \vartheta+(w^S\cdot \nabla) \vartheta+(y\cdot \nabla) \theta^S-{\rm curl}\, y-\nabla ({\rm div}\, \vartheta) &=-(y\cdot \nabla) \vartheta&\quad \mbox{ in }Q,\\
{\rm div}\, y&=0&\quad \mbox{ in }Q,\\
y(0)=y_0,\quad \vartheta(0)&=\vartheta_0&\quad \mbox{ in }\Omega,
\end{split}
\right.
\end{equation}
with boundary conditions
$$
y=m u^1\quad \mbox{ and }\quad \vartheta=m u^2 \mbox{ on }\Sigma.
$$
Next, we introduce the (real) spaces
\begin{equation}\label{DefG}
G\ov {\bf V}_n^0(\Omega)\times (L^2(\Omega;\mathbb{R}))^3\quad \mbox{ where }\quad{\bf V}_n^0(\Omega)\ov \{y\in (L^2(\Omega;\mathbb{R}))^3\mid {\rm div}\, y=0 \mbox{ in }\Omega,\; y\cdot n=0\;\mbox{ on }\partial\Omega\}
\end{equation}
and we define the following 
linear operator $A:\mathcal{D}(A)\subset G \to G$ by  
\begin{equation}\nonumber
\begin{split}
 A\begin{bmatrix}y\\ \vartheta \end{bmatrix}&=
\begin{bmatrix}
\displaystyle P(\Delta y-(w^S\cdot \nabla) y-(y\cdot \nabla) w^S+{\rm curl}\, \vartheta)\\
\displaystyle \Delta \vartheta-(w^S\cdot \nabla) \vartheta-(y\cdot \nabla) \theta^S+{\rm curl}\, y+\nabla ({\rm div}\, \vartheta)
\end{bmatrix}\\
\mathcal{D}(A)&=\big{\{}(y,\vartheta)\in (H^2(\Omega;\mathbb{R}))^3\times (H^2(\Omega;\mathbb{R}))^3\mid 
{\rm div}\, y=0\; \mbox{ in }\Omega,\; y=\vartheta= 0 \mbox{ on }\partial\Omega \big{\}}.
\end{split}
\end{equation}
Recall that $P$ denotes the orthogonal projection operator from $(L^2(\Omega;\mathbb{R}))^3$ onto ${\bf V}_n^0(\Omega)$.
Since $\Omega$ and $(w^S,\theta^S)$ are regular enough, with analogous arguments as in \cite{BT,BADRA-DCDS-A2011} we verify 
that $A$ generates an analytic semigroup and
that it has compact resolvent. It implies that $({\mathcal H}_1)$, $({\mathcal H}_3)$ and $({\mathcal H}_5)$ are satisfied. 
Moreover, we can verify that the adjoint of $A$ is given by: 
\begin{equation}\label{MicroDAstar} 
\begin{split} 
 A^*\begin{bmatrix}z\\\rho \end{bmatrix}&=
\begin{bmatrix}
\displaystyle P(\Delta z+(D^s z) w^S+\,^t(\nabla \rho) \theta^S +{\rm curl}\, \rho)\\
\displaystyle \Delta \rho +(\nabla \rho) w^S+{\rm curl}\, z+\nabla ({\rm div\,}\rho)
\end{bmatrix}\\
\mathcal{D}(A^*)&=\big{\{}(z,\rho)\in (H^2(\Omega;\mathbb{R}))^3\times (H^2(\Omega;\mathbb{R}))^3\mid {\rm div}\, z=0\; \mbox{ in }\Omega, z=\rho=0
\mbox{ on }\partial\Omega \big{\}}.
\end{split}
\end{equation}
By multiplying the two first equations of \eqref{eqMicro2} by $(z,\rho)\in \mathcal{D}(A^*)$ and integrating by parts we get :
\begin{equation}\label{Microinter1}
 \begin{split}
  \frac{{\rm d}}{{\rm d}t   }\left(\begin{bmatrix}Py \\ \vartheta \end{bmatrix}, \begin{bmatrix}z\\\rho \end{bmatrix}\right)_G &= \left(\begin{bmatrix}y \\ \vartheta \end{bmatrix}, 
\begin{bmatrix}\Delta z+(D^s z) w^S+\,^t(\nabla \rho) \theta^S +{\rm curl}\, \rho \\
\Delta \rho+(\nabla \rho) w^S+{\rm curl}\, z+\nabla ({\rm div\,}\rho) \end{bmatrix}\right)_G-
\left(\begin{bmatrix}(y\cdot \nabla) y \\ (y\cdot \nabla) \vartheta \end{bmatrix}, 
\begin{bmatrix}z\\\rho \end{bmatrix}\right)_G\\
&\,-\int_{\partial\Omega}(m u^2)\cdot \left(  \frac{\partial \rho}{\partial n}+({\rm div \, \rho}) n\right) \, {\rm d\sigma}-\int_{\partial\Omega}(m u^1)\cdot \frac{\partial z}{\partial n}\, {\rm d\sigma}.
\end{split}
\end{equation}
Thus, to rewrite \eqref{Microinter1} in the abstract form \eqref{eq1.1nl} let us first write 
$$ 
\begin{bmatrix}\Delta z+(D^s z) w^S+\,^t(\nabla \rho) \theta^S +{\rm curl}\, \rho \\
 \Delta \rho+(\nabla \rho) w^S+{\rm curl}\, z+\nabla ({\rm div\,}\rho) \end{bmatrix}=A^* \begin{bmatrix}z\\\rho \end{bmatrix}+\begin{bmatrix}\nabla\pi \\\ 
0 \end{bmatrix}
$$
where the pressure function $\pi$ is uniquely determined by
\begin{equation}\label{NormalisationPressions}
\nabla \pi=(I-P)(\Delta z+(D^s z) w^S+\,^t(\nabla \rho) \theta^S +{\rm curl}\, \rho)\quad \int_{\partial\Omega} m^2 \pi{\rm d\sigma}=0.
\end{equation}
Then an integration by parts gives: 
\begin{equation}\label{Microinter2}
\left(\begin{bmatrix}y \\ \vartheta \end{bmatrix}, 
\begin{bmatrix}\Delta z+(D^s z) w^S+\,^t(\nabla \rho) \theta^S +{\rm curl}\, \rho \\
\Delta \rho+(\nabla \rho) w^S+{\rm curl}\, z+\nabla ({\rm div\,}\rho) \end{bmatrix}\right)_G=
\left(\begin{bmatrix}Py \\\vartheta \end{bmatrix}, A^* \begin{bmatrix}z\\\rho \end{bmatrix}\right)_G
+\int_{\partial\Omega} m (u^1\cdot n) \pi \,{\rm d\sigma},
\end{equation}
and \eqref{Microinter1} becomes: 
\begin{equation}\label{Microinter1bis}
 \begin{split}
  \frac{{\rm d}}{{\rm d}t   }\left(\begin{bmatrix}Py \\  \vartheta \end{bmatrix}, \begin{bmatrix}z\\\rho \end{bmatrix}\right)_G &= \left(\begin{bmatrix}Py \\ \vartheta \end{bmatrix}, A^* \begin{bmatrix}z\\\rho \end{bmatrix}\right)_G
-\int_{\partial\Omega}m \left (u^2\cdot  \left(\frac{\partial \rho}{\partial n}+({\rm div\,}\rho) n\right)+u^1\cdot \left(\frac{\partial z}{\partial n}-\pi n\right)\right)\, {\rm d\sigma} \\
&-\left(\begin{bmatrix}(y\cdot \nabla) y  \\ (y\cdot \nabla) \vartheta \end{bmatrix}, 
\begin{bmatrix}z\\\rho \end{bmatrix}\right)_G,
\end{split}
\end{equation}
which suggests to define the control operator $B:W\to [\mathcal{D}(A^*)]'$ as follows 
\begin{equation}\label{MicroB}
W\ov {\bf V}_m^0(\partial\Omega)\times (L^2(\partial\Omega;\mathbb{R}))^3,\quad 
B^*\begin{bmatrix}z \\ \rho \end{bmatrix}\ov \begin{bmatrix}m\left(\pi n-\frac{\partial z}{\partial n}\right) \\ m\left(-({\rm div\,}\rho) n-\frac{\partial \rho}{\partial n}\right) \end{bmatrix}.
\end{equation}
Boundedness properties of trace operators ensure that $B$ satisfy $(\mathcal{H}_5)$ with 
$\gamma\in  (\frac{3}{4},1)$. Note that the normalization condition in \eqref{NormalisationPressions} guarantees that the range of $B^*$ is included in $W$. This is due to the fact $ \frac{\partial z}{\partial n}$ is tangential because $z$ is divergence free and vanishs on the boundary, see formula \eqref{DecFormulaDiv} below.

Next, define the lifting mapping $D v=\xi $ where $\xi$ is solution of
\begin{equation}\label{DefD}
 \left\{
\begin{split}
 -\Delta \xi+\nabla q&=0& \mbox{ in }\Omega,\\
{\rm div}\, \xi&=0& \mbox{ in }\Omega,\\
\xi&=m v  &\mbox{ on }\partial\Omega.
\end{split}
\right.
\end{equation}
Then from $y-D u^1\in {\bf V}_n^0(\Omega)$ we get $(I-P)y=(I-P)D u^1$ and 
$$
\begin{bmatrix}y \\ \vartheta \end{bmatrix}=\begin{bmatrix} P y \\ \vartheta \end{bmatrix}+\begin{bmatrix} (I-P)D u^1 \\ 0 \end{bmatrix},
$$
and the nonlinearity in \eqref{Microinter1bis} can be rewritten in terms of $Py,\vartheta , u$ as follows: 
\begin{equation}\label{NLMicro}
\mathcal{N}\left(\begin{bmatrix}Py \\ \vartheta \end{bmatrix}, \begin{bmatrix}u^1 \\ u^2 \end{bmatrix}\right)
\ov - \begin{bmatrix} P\left[((Py+(I-P)D u^1)\cdot \nabla) (Py+(I-P)D u^1)\right] \\ ((Py+(I-P)D u^1)\cdot \nabla) \vartheta \end{bmatrix}.
\end{equation}
Finally, \eqref{Microinter1bis} is reduced to the following abstract formulation of type \eqref{eq1.1nl}:
\begin{equation}\label{MicrolinAbst}
\begin{bmatrix}Py \\ \vartheta \end{bmatrix}'=A \begin{bmatrix}Py \\ \vartheta \end{bmatrix}+B \begin{bmatrix}u^1 \\ u^2 \end{bmatrix}+\mathcal{N}\left(\begin{bmatrix}Py \\ \vartheta \end{bmatrix}, \begin{bmatrix}u^1 \\ u^2 \end{bmatrix}\right)\quad \mbox{ in }\;[\mathcal{D}(A^*)]'.
\end{equation}
\medskip
{\bf Step 2. Verification of Fattorini's criterion.} Since $A$ and $B$ satisfy $(\mathcal{H}_1)$, $(\mathcal{H}_3)$, $(\mathcal{H}_5)$ and $(\mathcal{H}_6)$, Theorem \ref{CorStabReal} applies and stabilizability of $(A+\sigma,B)$ for all $\sigma>0$
is reduced to \eqref{UC}, which is to say (see \eqref{MicroDAstar} and \eqref{MicroB}) that for all $\lambda\in \mathbb{C}$, every $(z,\pi,\rho, \kappa )$ satisfying 
 
\begin{equation}\label{Microadj}
 \left\{
\begin{split}
 \lambda  z- \Delta z-(D^s z) w^S-\,^t(\nabla \rho) \theta^S -{\rm curl}\, \rho+\nabla \pi &=0 &\quad \mbox{ in }\Omega,\\
  \lambda  \rho-\Delta \rho-(\nabla \rho) w^S-{\rm curl}\, z+\nabla \kappa  &=0&\quad \mbox{ in }\Omega,\\
{\rm div}\, z=0,\quad {\rm div}\, \rho+\kappa  &=0&\quad \mbox{ in }\Omega.
\end{split}
\right.
\end{equation}
with boundary conditions
$$
z=\rho=0\quad \mbox{ on }\partial\Omega
$$
and satisfying moreover 
\begin{equation}
\frac{\partial z}{\partial n}-\pi n=0\quad \mbox{ and }\quad  \frac{\partial \rho}{\partial n}-\kappa  n=0 \quad  {\rm on }\;\; \Gamma\ov {\rm Supp}(m),\label{Nullbord2}
\end{equation}
must be identically equal to zero in $\Omega$. 

With a classical extension of the domain procedure, such above uniqueness theorem can be obtained from Theorem 
\ref{UCMicro} of the Appendix applied in an extended domain. 
Let us briefly recall the argument. First by recalling the decomposition formula 
\begin{equation}\label{DecFormulaDiv}
{\rm div}\, z={\rm div}_\tau \,z_\tau +({\rm div }\, n) z\cdot n+\frac{\partial z}{\partial n}\cdot n\quad \mbox{ on }\Gamma
\end{equation} (see for instance \cite[Section 5.4.3]{HenrotPierre2005}) 
where ${\rm div}_\tau$ denotes the tangential divergence operator and $z_\tau$ the tangential component of $z$, 
from $z=0$ and ${\rm div}\, z=0$ on $\Gamma$ we get that $\frac{\partial z}{\partial n}$ is tangential on $\Gamma$. Then 
$\frac{\partial z}{\partial n}-\pi n=0$ yields $\pi=0$, $\frac{\partial z}{\partial n}=0$ and finally $\nabla z=0$ on $\Gamma$. Similarly, $\rho=0$ on $ \Gamma$ 
implies $\frac{\partial \rho}{\partial n}\cdot n={\rm div}\,\rho=-\kappa$ on $\Gamma$. Since from the second equality in \eqref{Nullbord2}
we also have $\frac{\partial \rho}{\partial n}\cdot n=\kappa$ we deduce that $\kappa=0$ and then $\frac{\partial \rho}{\partial n}=0$ on $\Gamma$, and since 
$\rho=0$ on $ \Gamma$ we get $\nabla \rho =0$ on $ \Gamma$.
Finally, we have proved that $ \nabla z$, $\nabla\rho$, $\pi$, $\kappa$ vanish on $\Gamma$ and we can smoothly extend $(z, \pi, \rho, \kappa )$ 
by zero in a larger domain containing an outside open set $\omega$. Then it suffices to invoke Theorem \ref{UCMicro} for this larger domain.
\medskip

{\bf Step 3. Stabilizing control for linear and nonlinear systems.}  From Theorem \ref{CorStabReal} we obtain the existence of families of type \eqref{Families}
(where $W$ and $G$ are given by \eqref{DefG}, \eqref{MicroB}) and a finite rank feedback operator $F: G\to W$ of the form \eqref{FeedbackF} such that $A+BF$ with domain 
$\mathcal{D}(A+BF)$ is the infinitesimal generator of an exponentially stable semigroup on $G$. However, because $B$ is now unbounded, $\mathcal{D}(A+BF)$ is not equal to
$\mathcal{D}(A)$ as it was the case in Subsection \ref{stabMHD}. More precisely, by following the method considered in \cite{BT} it can be proved that $\mathcal{D}(A+BF)$ is composed of 
$(P\xi,\zeta)$ where $(\xi,\zeta)\in (H^2(\Omega))^3\times (H^2(\Omega))^3$ obey
\begin{equation}\label{traceFeedback}
\begin{bmatrix} \xi \\ \zeta\end{bmatrix}=
m\sum_{j=1}^K \begin{bmatrix} v_j^1 \\ v_j^2 \end{bmatrix}\int_\Omega \left (\xi\cdot \widehat z_j+\zeta \cdot \widehat \rho_j\right ){\rm d} x \quad \mbox{ on }\partial\Omega,
\end{equation}
and that for each $s\in (0,1)$ the space $\mathcal{D}((-A_F)^\frac{s}{2})$ is a closed subspace of 
$(H^{s}(\Omega;\mathbb{R}))^3\times (H^{s}(\Omega;\mathbb{R}))^3$ composed with elements satisfying trace conditions on $\partial \Omega$ 
related to the feedback law $F$ when $s\geq 1/2$. Moreover, as it happens for the Navier-Stokes system treated in \cite{BT,BADRASICON}, Sobolev embeddings 
guarantee that the nonlinearity defined in \eqref{NLMicro} satisfies \eqref{HypNL} only for $s\in [\frac{d-2}{2}, 1]$ where $d$ is the space dimension. Then feedback stabilization of 3D micropolar 
system \eqref{eqMicro2} cannot be 
obtained unless very specific initial trace compatibility conditions related to $F$ are satisfied. However, since when $d=2$ inequality \eqref{HypNL} is satisfied for $s\in [0, 1/2)$, 
a relevant stabilization theorem could be obtained from Theorem \ref{ThmNL} for the two dimensional version of system \eqref{eqMicrop}-\eqref{bdycont} 
with feedback  control \eqref{ControlFeed}.

To obtain a 3D stabilization result we have to consider a dynamical control. From \eqref{UC} we have the existence of a complex matrix  $\Lambda$ of size $K\times K$ and of a family $\{\widehat \varepsilon _j=(\widehat \varepsilon _j^1,\widehat \varepsilon _j^2)\in {\bf V}_n^0(\Omega)\times (L^2(\Omega;\mathbb{R})^3 \,;\,j=1,\dots, K\}$,
such that dynamical controls:
\begin{equation}\label{Microfin1}
\begin{split}
\begin{bmatrix}u^1(t) \\ u^2(t) \end{bmatrix}&=\sum_{j=1}^K u_j(t) \begin{bmatrix}v_j^1 \\ v_j^2 \end{bmatrix},\quad \overline{u}\ov (u_1,\dots,u_K) \\
\overline{u}'+\Lambda \overline{u}&=\sum_{j=1}^K e_j \int_\Omega \left( \widehat \varepsilon _j^1\cdot (w-w^S)+ \widehat \varepsilon _j^1\cdot (\theta-\theta^S)\right){\rm d}x \\
\overline{u}(0)&=0
\end{split}
\end{equation}
stabilize the linear system obtained from \eqref{MicrolinAbst} by linearizing around zero. In the above setting $\{e_j\,;\,j=1,\dots, K\}$ denotes a basis of $\mathbb{R}^K$. The following stabilization theorem can be deduced from
 Theorem \ref{ThmNL2} with $s=1$ and Remark \ref{RemReal}.
\begin{Theorem}\label{StabMicro3D}
Let $\Omega$ be a bounded and open subset of $\mathbb{R}^d$ of class $C^{2,1}$, let $(w^S,\theta^S)\in (H^2(\Omega;\mathbb{R}))^d\times (H^2(\Omega;\mathbb{R}))^d$ 
satisfy \eqref{eqMicroStat} and let $(y_0,\vartheta_0)\in (H_0^1(\Omega;\mathbb{R}))^d\times (H_0^1(\Omega;\mathbb{R}))^d$ 
satisfy \eqref{MicroCCini}.
For all $\sigma>0$ there exists $\mu>0$ such that if
$$\|y_0\|_{(H^{1}(\Omega;\mathbb{R}))^3}+\|\vartheta_0\|_{(H^{1}(\Omega;\mathbb{R}))^3}\leq \mu,$$
then system \eqref{eqMicrop}-\eqref{bdycont}-\eqref{Microfin1} admits a solution $(w,r,\theta, \overline{u})$ in
\begin{equation}\nonumber
\begin{split}
\{(w^S,r^S,\theta^S,0)\}&+W_\sigma((H^{2}(\Omega;\mathbb{R}))^3, (L^{2}(\Omega;\mathbb{R}))^3) \times L^{2}(H^{1}(\Omega;\mathbb{R})/{\mathbb R})\\
&\times W_\sigma((H^{2}(\Omega;\mathbb{R}))^3, (L^{2}(\Omega;\mathbb{R}))^3)\times H^1(\mathbb{R}^+;\mathbb{R}^K),
\end{split}
\end{equation}
which is unique within the class of function in
\begin{equation}\nonumber
\begin{split}
\{(w^S,r^S,\theta^S,0)\}&+W_{{\rm loc}}((H^{2}(\Omega;\mathbb{R}))^3, (L^{2}(\Omega;\mathbb{R}))^3) \times L_{{\rm loc}}^{2}(H^{1}(\Omega;\mathbb{R})/{\mathbb R})\\
&\times W_{{\rm loc}}((H^{2}(\Omega;\mathbb{R}))^3, (L^{2}(\Omega;\mathbb{R}))^3)\times H_{{\rm loc}}^1(\mathbb{R}^+;\mathbb{R}^K),
\end{split}
\end{equation}
Moreover, there exists $C>0$ such that for all $t\geq 0$ the following estimate holds:
\begin{equation}\nonumber
\begin{split}
\|w(t)-w^S\|_{(H^{1}(\Omega;\mathbb{R}))^3}+\|\theta(t)-\theta^S\|_{(H^{1}(\Omega;\mathbb{R}))^3}+\|\overline{u}(t)\|_{\mathbb{R}^K}\\
\qquad \leq C e^{-\sigma t}(\|w_0-w^S\|_{(H^{1}(\Omega;\mathbb{R}))^3}+\|\theta_0-\theta^S\|_{(H^{1}(\Omega;\mathbb{R}))^3}).
\end{split}
\end{equation}
\end{Theorem}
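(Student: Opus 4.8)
The plan is to obtain Theorem~\ref{StabMicro3D} as a consequence of the abstract dynamical stabilization result, Theorem~\ref{ThmNL2}, applied with $s=1$ to the abstract reformulation \eqref{MicrolinAbst} of the micropolar system, followed by the real transcription of Remark~\ref{RemReal}. The reason a \emph{dynamical} control must be used rather than the static feedback \eqref{FeedbackSumReal} is the following: the control operator $B$ in \eqref{MicroB} is unbounded (it satisfies $(\mathcal{H}_5)$ only for $\gamma\in(\frac{3}{4},1)$), and by Sobolev embeddings the nonlinearity \eqref{NLMicro} obeys the bilinear bound \eqref{HypNL} only for $s\in[\frac{d-2}{2},1]$; in dimension $d=3$ this interval is $[\frac{1}{2},1]$, so Theorem~\ref{ThmNL} would constrain the initial data to $H_{F,s/2}$ with $s\geq\frac{1}{2}$, forcing trace compatibility conditions attached to $F$. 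Adding the control coefficients $\overline u$ as new unknowns, as in the extended system \eqref{ExtSyst}, removes this obstruction and lets one work at $s=1$ with data merely in $(H_0^1(\Omega))^d\times(H_0^1(\Omega))^d$.

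First I would fix $\sigma>0$ and record that, by Step~1, the pair $(A,B)$ satisfies $(\mathcal{H}_1)$, $(\mathcal{H}_3)$, $(\mathcal{H}_5)$ and $(\mathcal{H}_6)$, while Step~2 establishes the Fattorini criterion \eqref{UC} from the unique continuation Theorem~\ref{UCMicro} after the extension-of-domain argument. Theorem~\ref{CorStabReal} then provides an integer $K$ and an admissible real family $v=(v_j^1,v_j^2)_{j=1,\dots,K}$. Forming the operator $V$ and the extended pair $(\mathbb{A},\mathbb{V})$ of \eqref{ExtSyst}, I would check (exactly as announced before Theorem~\ref{ThmNL2}) that $(\mathbb{A},\mathbb{V})$ inherits $(\mathcal{H}_1)$, $(\mathcal{H}_3)$, $(\mathcal{H}_5)$ with $\gamma=0$, $(\mathcal{H}_6)$ and \eqref{UCstab}; applying Theorem~\ref{CorStab} to it yields the matrix $\Lambda$ and the kernels $\widehat\varepsilon_j$ of \eqref{Microfin1}, under which the linearization of the extended system around the origin decays at rate $\sigma$.

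The technical heart is verifying hypothesis \eqref{HypNL2} of Theorem~\ref{ThmNL2} at $s=1$ for the nonlinearity \eqref{NLMicro}, i.e. estimating $\mathcal{N}$ in the norm of $H_{\frac{s-1}{2}}=H$ by the product of the $\mathbb{H}_{1/2}$- and $\mathbb{H}_1$-norms of the extended state. Since $\mathbb{H}_\theta$ is defined through the shifted variable $y+(\mu_0-A)^{-1}V\overline u$, and since \eqref{NLMicro} contains the lifting $(I-P)Du^1$ of the boundary data, I would split $a=Py+(I-P)Du^1$ and reduce the bound to the convection estimates $\|(a\cdot\nabla)b\|_{(L^2(\Omega))^3}\le C\|a\|_{(H^1(\Omega))^3}\|b\|_{(H^2(\Omega))^3}$ (and their interpolated and Lipschitz versions), which hold in $\mathbb{R}^3$ by the same Sobolev arguments used for the Navier--Stokes nonlinearity in \cite{BT,BADRASICON}. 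I expect this to be the main obstacle: one must match the abstract scale $\mathbb{H}_\theta$ to the concrete spaces $(H^{s}(\Omega))^3$ of the statement, carry along the contribution of the lifting $Du^1$ and the pressure normalization \eqref{NormalisationPressions}, and confirm that $s=1$ lies in the admissible range $[\frac{d-2}{2},1]$ so that the embeddings close.

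Finally, Theorem~\ref{ThmNL2} applied with $s=1$ gives, for $\|y_0\|_{(H^1(\Omega))^3}+\|\vartheta_0\|_{(H^1(\Omega))^3}$ small enough, a unique solution $(y,\overline u)\in\mathbb{W}_\sigma^1$ of the closed-loop extended system together with the exponential estimate. Undoing the change of unknowns $(y,\vartheta,p)=(w-w^S,\theta-\theta^S,r-r^S)$, recovering the pressure $r$ from the abstract equation by the usual de Rham argument, and invoking Remark~\ref{RemReal} to remain in the real spaces, I would then read off the membership of $(w,r,\theta,\overline u)$ in the asserted function spaces and the stated decay rate, which completes the proof.
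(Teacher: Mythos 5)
Your proposal follows essentially the same route as the paper: the three-step scheme (abstract reformulation with the unbounded boundary control operator, verification of \eqref{UC} via Theorem \ref{UCMicro} and the extension-of-domain argument, then passage to the extended system with dynamical control because the trace compatibility obstruction rules out static feedback in 3D), concluded by Theorem \ref{ThmNL2} at $s=1$ together with Remark \ref{RemReal}. Your additional remarks on checking \eqref{HypNL2} through the convection estimates and the lifting $(I-P)Du^1$ correctly identify the technical content the paper delegates to \cite{BT,BADRASICON}.
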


\section{Appendix: uniqueness theorems for coupled Stokes type systems}\label{Appendix}
In the present appendix, we first provide local Carleman inequalities for the Stokes system that are useful to prove Fattorini's criterion \eqref{UC} corresponding to Stokes and 
coupled Stokes like systems. Thus, as a first simple illustration of such inequalities, we recover in a uniqueness theorem for Oseen equations originally obtained by Fabre and Lebeau in \cite{FL1996}. Next, we prove two uniqueness theorem for coupled Stokes like system that are needed in Section \ref{sect5}: one for an
adjoint MHD system and one for an adjoint micropolar system. 

In what follows we use the notation of Section \ref{sect5}. Moreover, 
$\mathcal{O}$ denotes a nonempty bounded open subset of $\mathbb{R}^d$ of class $C^2$ and $\psi:{\mathcal{O}}\to \mathbb{R}$ is a function satisfying
\begin{equation}
\psi\in C^2({\mathcal{O}};\mathbb{R}),\quad \psi>0\quad \mbox{ and }\quad |\nabla \psi|>0\quad \mbox{ on }\;\mathcal{O}.\label{eqpsi}
\end{equation}
\subsection{Carleman inequalities for the Stokes system}

Let us recall a well-known Carleman inequality for the Laplace equation.
\begin{Theorem}\label{CarlEll}
Let $k\in \{0,1\}$, $F_0\in L^2(\mathcal{O};\mathbb{C})$ and $F_1\in (L^2(\mathcal{O};\mathbb{C}))^d$. 
There exist $C>0$ and $\widehat \tau >1$ such that for all $\tau \geq \widehat \tau $ there exists $\widehat s(\tau )$ such that for all $s\geq \widehat s(\tau )$
and for all $u\in H_0^1(\mathcal{O};\mathbb{C})$ solution of
$$
-\Delta u=F_0+{\rm div\,}F_1 \quad \mbox{ in }\quad \mathcal{O}
$$
the following inequality holds:
\begin{equation}\label{CarlEstEll}
 \int_{\mathcal{O}} \left(e^{(k-1)\tau \psi}|\nabla u|^2+s^2\tau ^2 e^{(k+1)\tau \psi} |u|^2 \right) e^{2 s e^{\tau \psi}}{\rm d}x   \leq C\int_{\mathcal{O}}  \left(
s e^{k\tau \psi} |F_1|^2 +s^{-1}\tau ^{-2}  e^{(k-2)\tau \psi} |F_0|^2\right) e^{2 s e^{\tau \psi}}{\rm d}x   .
\end{equation}
\end{Theorem}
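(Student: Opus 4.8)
The plan is to prove the Carleman inequality \eqref{CarlEstEll} for the Laplace equation. This is a classical local Carleman estimate, and the standard route is the \emph{conjugation method}. First I would introduce the weight function $\varphi \ov e^{\tau\psi}$ and the associated Carleman weight $e^{s\varphi}$, then perform the change of unknown $w \ov e^{s\varphi} u$, so that $u = e^{-s\varphi} w$ with $w \in H_0^1(\mathcal{O};\mathbb{C})$. The next step is to compute the conjugated operator $P_s w \ov e^{s\varphi}(-\Delta)(e^{-s\varphi} w)$, which splits naturally into a formally self-adjoint part $P_s^+$ and a formally skew-adjoint part $P_s^-$. Explicitly, one finds terms involving $-\Delta w$, the gradient terms $s\nabla\varphi\cdot\nabla w$, and the zeroth-order term $s^2|\nabla\varphi|^2 w$ together with $s(\Delta\varphi) w$. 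I would group these as $P_s^+ w = -\Delta w - s^2|\nabla\varphi|^2 w$ and $P_s^- w = 2s\nabla\varphi\cdot\nabla w + s(\Delta\varphi)w$.

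The heart of the argument is to expand $\|P_s w\|_{L^2}^2 = \|P_s^+ w\|^2 + \|P_s^- w\|^2 + 2\Re(P_s^+ w, P_s^- w)$ and to show that the cross term $2\Re(P_s^+ w, P_s^- w)$, after integration by parts, produces a positive lower bound of the form $\int_{\mathcal{O}}(s\tau^2 e^{\tau\psi}|\nabla w|^2 + s^3\tau^4 e^{3\tau\psi}|w|^2)\,dx$ modulo lower-order terms absorbable for $\tau$ and $s$ large. The crucial point is that each derivative hitting $\varphi = e^{\tau\psi}$ produces a factor $\tau$ times $\varphi$, so powers of $\tau$ accumulate favourably; the positivity relies on the condition $|\nabla\psi| > 0$ on $\mathcal{O}$ from \eqref{eqpsi}, which is precisely what makes $\psi$ a \emph{pseudoconvex} weight for the Laplacian and guarantees the dominant terms have the correct sign. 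Since $\mathcal{O}$ is only assumed $C^2$ and $u\in H_0^1$, the boundary integrals generated by the integrations by parts vanish because $w = e^{s\varphi}u$ also vanishes on $\partial\mathcal{O}$; I would first establish the estimate for smooth $u$ and then extend to $H_0^1$ by density. Once the estimate is obtained for the conjugated quantities, I would translate back via $u = e^{-s\varphi}w$, noting that $\nabla w = e^{s\varphi}(\nabla u + s\nabla\varphi\, u)$, so the left-hand side of \eqref{CarlEstEll} is controlled by $\|\nabla w\|^2 + s^2\tau^2\|\varphi w\|^2$ up to the exponential weights, with the stated powers $e^{(k-1)\tau\psi}$ and $s^2\tau^2 e^{(k+1)\tau\psi}$ arising from tracking the $\tau$-powers in $\varphi$ and its derivatives.

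The right-hand side involving the source $F_0 + {\rm div\,}F_1$ requires handling the divergence term carefully: I would conjugate the equation to get $P_s w = e^{s\varphi}(F_0 + {\rm div\,}F_1)$ and estimate $\|e^{s\varphi}F_0\|$ directly, while for the divergence term I would integrate by parts \emph{before} taking absolute values so that the derivative falls on the test functions rather than on $F_1$, which explains why $F_1$ appears with one fewer power of $s$ and a shifted weight $e^{k\tau\psi}$ compared with $F_0$'s weight $e^{(k-2)\tau\psi}$ and its factor $s^{-1}\tau^{-2}$. The free parameter $k\in\{0,1\}$ is a bookkeeping device that shifts all weights by a uniform power of $\varphi$, and the estimate for general $k$ follows by applying the $k=1$ (or $k=0$) computation to the rescaled weight or simply by carrying $k$ symbolically through the integration by parts.

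I expect the main obstacle to be the precise accounting in the cross-term $2\Re(P_s^+ w, P_s^- w)$: obtaining the sharp positive lower bound demands a careful integration by parts in which the second-order term $-\Delta w$ paired against $2s\nabla\varphi\cdot\nabla w$ generates, via the identity $2\Re\int(-\Delta w)(\nabla\varphi\cdot\overline{\nabla w}) = \int \mathrm{Hess}(\varphi)[\nabla w,\overline{\nabla w}] + \tfrac12\int(\Delta\varphi)|\nabla w|^2$ plus boundary contributions, the gradient term, while the zeroth-order pairing $-s^2|\nabla\varphi|^2 w$ against $2s\nabla\varphi\cdot\nabla w$ yields the $s^3$ weighted term after one more integration by parts. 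Verifying that all the resulting lower-order and mixed terms are genuinely absorbable, uniformly once $\tau\geq\widehat\tau$ and $s\geq\widehat s(\tau)$, is the delicate bookkeeping at the core of the proof; everything else is routine once the correct weight hierarchy is identified.
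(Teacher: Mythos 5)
Your proposal and the paper part ways at the very first step: the paper does not prove Theorem \ref{CarlEll} by the conjugation method (or by any computation of comparable length). It simply cites \cite[Thm A.1]{ImmPuel2003} for the case $k=1$, observes that the real-valued proof there carries over verbatim to $\mathbb{C}$-valued $u$, and obtains the case $k=0$ by applying the $k=1$ inequality to the function $e^{-\frac{\tau}{2}\psi}u$, whose equation has the same structure with modified sources $F_0$, $F_1$ (the commutator terms involving $\tau\nabla\psi$ are absorbed for $\tau$, $s$ large). What you propose is, in effect, a reconstruction of the proof of the cited theorem: the Fursikov--Imanuvilov conjugation $w=e^{s\varphi}u$ with $\varphi=e^{\tau\psi}$, the splitting into symmetric and antisymmetric parts, the positivity of the cross term coming from $|\nabla\psi|>0$ together with the large parameter $\tau$, and the integration by parts that converts the $H^{-1}$ source $\operatorname{div}F_1$ into the weight $s\,e^{k\tau\psi}|F_1|^2$. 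This is the correct architecture and would yield the stated inequality; it buys self-containedness at the cost of several pages of bookkeeping that the paper deliberately outsources. Your remark that $k$ is a uniform shift of the weight handled by conjugating with $e^{-\frac{\tau}{2}\psi}$ is exactly the paper's reduction.

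Two soft spots in your sketch deserve attention if you were to carry it out. First, since the right-hand side $F_0+\operatorname{div}F_1$ only lies in $H^{-1}(\mathcal{O})$, the quantity $\|P_s w\|_{L^2}^2$ that you propose to expand is not defined; the whole computation must be run in the weak formulation (pairing the equation against weighted test functions), or one must first regularize $F_0$ and $F_1$, apply the smooth estimate to the corresponding solutions, and pass to the limit using continuity of both sides in $L^2$ -- density in $u$ alone does not resolve this. Second, the Hessian term $\mathrm{Hess}(\varphi)[\nabla w,\overline{\nabla w}]$ has leading part $s\tau^2 e^{\tau\psi}|\nabla\psi\cdot\nabla w|^2$, which controls only the component of $\nabla w$ along $\nabla\psi$; recovering the full gradient with weight $e^{(k-1)\tau\psi}$ on the left of \eqref{CarlEstEll} requires the additional standard step of pairing the equation against $s\tau^2\varphi|\nabla\psi|^2\overline{w}$ (or an equivalent device), which your sketch elides. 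Both points are routine for someone who has written out a Carleman estimate before, but they are precisely where the ``delicate bookkeeping'' you mention lives.
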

Inequality \eqref{CarlEstEll} for $k=1$ can be obtained for instance from \cite[Thm A.1]{ImmPuel2003} and \eqref{CarlEstEll} for $k=0$ is obtained by applying 
\eqref{CarlEstEll} with $k=1$ to the equation satisfied by $e^{-\frac{\tau}{2} \psi} u$. Note that the proof of \eqref{CarlEstEll} proposed in the above quoted work 
is performed for a $\mathbb{R}$-valued function $u$, but it is easily checked that it can be done in the same way for a $\mathbb{C}$-valued function $u$.

From Theorem \ref{CarlEll}, we deduce the following Carleman inequalities for the Stokes system.
\begin {Corollary}\label{CarlLocalStokes}
Let $\alpha\in \mathbb{R}\backslash\{-1\}$. There exist $C>0$ and $\widehat \tau >1$ such that for all $\tau \geq \widehat \tau $ there exists $\widehat s(\tau )$ 
such that for all $s\geq \widehat s(\tau )$ and
for all $(z,\pi)\in (H_0^2({\mathcal{O}};\mathbb{C}))^d\times H_0^1({\mathcal{O}};\mathbb{C})$ the following inequalities hold:
\begin{eqnarray}
 \int_{\mathcal{O}} \left(|\nabla z|^2 +s^2\tau ^2e^{2\tau \psi} |z|^2\right) e^{2 s e^{\tau \psi}}{\rm d}x   &\leq& 
C\int_{\mathcal{O}} (se^{\tau \psi} |{\rm div}\, z+\alpha \pi |^2+\tau ^{-2}|\nabla \pi-\Delta z|^2) e^{2 s e^{\tau \psi}}{\rm d}x   ,\label{CarlEstStokes1}\\
\int_{\mathcal{O}}  s\tau ^2  e^{\tau \psi} \left (|{\rm curl}\, z|^2+|{\rm div}\, z-\pi|^2\right) e^{2 s e^{\tau \psi}}{\rm d}x   
&\leq& C \int_{\mathcal{O}}  |\nabla \pi-\Delta z|^2 e^{2 s e^{\tau \psi}}{\rm d}x   .\label{CarlEstStokes2}
\end{eqnarray}
\end {Corollary}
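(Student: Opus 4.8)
The plan is to reduce everything to the scalar elliptic Carleman estimate of Theorem \ref{CarlEll}, applied separately to the components of $z$, to the scalar quantity $\pi-{\rm div}\,z$, and to ${\rm curl}\,z$. Throughout I write $f\ov \nabla\pi-\Delta z$ for the momentum residual, $p\ov {\rm div}\,z+\alpha\pi$ and $w\ov \pi-{\rm div}\,z$. Since $z\in (H_0^2(\mathcal{O}))^d$, both $z$ and its first derivatives vanish on $\partial\mathcal{O}$, so ${\rm div}\,z$ and ${\rm curl}\,z$ vanish there; together with $\pi\in H_0^1(\mathcal{O})$ this guarantees that each $z_i$, that $w$, and that every component of ${\rm curl}\,z$ belongs to $H_0^1(\mathcal{O})$, so Theorem \ref{CarlEll} is applicable in each case. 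The role of the hypothesis $\alpha\neq-1$ is isolated in the elementary identity $(\alpha+1)\pi=p+w$, which recovers the pressure from the two ``good'' combinations $p$ and $w$.

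First I would establish \eqref{CarlEstStokes1}. Writing the $i$-th momentum equation as $-\Delta z_i=f_i-\partial_i\pi={\rm div}(-\pi e_i)+f_i$, where $e_i$ is the $i$-th coordinate vector, Theorem \ref{CarlEll} with $k=1$, $F_0=f_i$, $F_1=-\pi e_i$, summed over $i$, yields
\begin{equation}\nonumber
\int_{\mathcal{O}}\left(|\nabla z|^2+s^2\tau^2e^{2\tau\psi}|z|^2\right)e^{2se^{\tau\psi}}\,{\rm d}x\leq C\int_{\mathcal{O}}\left(se^{\tau\psi}|\pi|^2+s^{-1}\tau^{-2}e^{-\tau\psi}|f|^2\right)e^{2se^{\tau\psi}}\,{\rm d}x.
\end{equation}
It remains to absorb the pressure term. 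Taking the divergence of the momentum equation gives $-\Delta w={\rm div}(-f)$, so Theorem \ref{CarlEll} with $k=0$, $F_0=0$, $F_1=-f$ bounds $\int s^2\tau^2e^{\tau\psi}|w|^2e^{2se^{\tau\psi}}$ by $C\int s|f|^2e^{2se^{\tau\psi}}$. Using $|\pi|^2\leq \frac{2}{(\alpha+1)^2}(|p|^2+|w|^2)$ and $se^{\tau\psi}|w|^2=(s\tau^2)^{-1}\,s^2\tau^2e^{\tau\psi}|w|^2$, the $|\pi|^2$ term splits into the target pressure term $\int se^{\tau\psi}|p|^2$ and a remainder bounded by $C\tau^{-2}\int|f|^2e^{2se^{\tau\psi}}$; since moreover $s^{-1}\tau^{-2}e^{-\tau\psi}\leq\tau^{-2}$ for $s\geq1$ and $\psi>0$, this is exactly \eqref{CarlEstStokes1}.

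For \eqref{CarlEstStokes2} I would use the same two auxiliary estimates but at one weight power less. Dividing the $w$-estimate by $s$ gives $\int s\tau^2e^{\tau\psi}|{\rm div}\,z-\pi|^2e^{2se^{\tau\psi}}\leq C\int|f|^2e^{2se^{\tau\psi}}$. For the vorticity, taking the curl of the momentum equation kills the pressure and yields $-\Delta({\rm curl}\,z)={\rm curl}\,f$; each component of ${\rm curl}\,f$ is a divergence of a rearrangement of the entries of $f$ (for instance ${\rm curl}\,f={\rm div}(f_2,-f_1)$ when $d=2$), so that the associated field $G$ satisfies $|G|\lesssim|f|$ and Theorem \ref{CarlEll} with $k=0$, $F_0=0$, $F_1=G$ applies to each component of ${\rm curl}\,z\in H_0^1(\mathcal{O})$. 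This bounds $\int s^2\tau^2e^{\tau\psi}|{\rm curl}\,z|^2e^{2se^{\tau\psi}}$ by $C\int s|f|^2e^{2se^{\tau\psi}}$, and dividing by $s$ produces the required vorticity bound. Adding the two contributions gives \eqref{CarlEstStokes2}. The constants $\widehat\tau$ and $\widehat s(\tau)$ for the corollary are the maxima over the finitely many invocations of Theorem \ref{CarlEll}.

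The individual steps are short; the part that needs the most care is the weight bookkeeping, namely verifying that every auxiliary term produced — the $se^{\tau\psi}|\pi|^2$ term, the subprincipal $s^{-1}\tau^{-2}e^{-\tau\psi}|f|^2$ term, and the surplus power of $s$ in the $w$- and curl-estimates — can be absorbed into the target right-hand sides after dividing by the appropriate positive power of $s$ or $\tau$, which is precisely what fixes the large-parameter thresholds $\widehat\tau$ and $\widehat s(\tau)$. The only genuinely structural inputs beyond routine computation are the observations that $\pi-{\rm div}\,z$ and ${\rm curl}\,z$ each solve a Poisson equation whose right-hand side is in divergence form with $L^2$ data controlled by $|f|$, and the identity $(\alpha+1)\pi=p+w$ that makes transparent why $\alpha=-1$ must be excluded.
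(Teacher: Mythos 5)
Your proof is correct, and it runs on the same engine as the paper's: everything is reduced to the scalar Carleman estimate of Theorem \ref{CarlEll}, applied componentwise to $z$, to ${\rm curl}\,z$ and to $\pi-{\rm div}\,z$, and your derivation of \eqref{CarlEstStokes2} (the $k=0$ estimate for $-\Delta({\rm curl}\,z)={\rm curl}\,f$ and $-\Delta(\pi-{\rm div}\,z)=-{\rm div}\,f$, then divided by $s$) is exactly the paper's. Where you genuinely diverge is in assembling \eqref{CarlEstStokes1}. The paper does not apply Theorem \ref{CarlEll} to $-\Delta z_i=f_i-\partial_i\pi$ with the full pressure as divergence-form datum; instead it rewrites the Laplacian via $-\Delta z={\rm curl}({\rm curl}\,z)-\nabla({\rm div}\,z)$ to get
\begin{equation}\nonumber
-\Delta z=\frac{1}{1+\alpha}\left({\rm curl}({\rm curl}\,z)-\nabla({\rm div}\,z+\alpha\pi)+\alpha f\right),
\end{equation}
so that $F_1$ already contains the good combination ${\rm div}\,z+\alpha\pi$ together with ${\rm curl}\,z$, and the leftover ${\rm curl}\,z$ contribution is then absorbed by invoking the already-proved \eqref{CarlEstStokes2}. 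You instead keep $\pi$ itself in $F_1$ and remove it afterwards through the algebraic splitting $(1+\alpha)\pi=({\rm div}\,z+\alpha\pi)+(\pi-{\rm div}\,z)$, controlling the second piece by the auxiliary Poisson estimate for $w=\pi-{\rm div}\,z$. The two routes are of comparable length; the hypothesis $\alpha\neq-1$ enters in both as a division by $1+\alpha$, in the paper at the level of the PDE identity and in yours at the level of the pressure decomposition, and your version makes that role slightly more transparent. Your weight bookkeeping ($s^{-1}\tau^{-2}e^{-\tau\psi}\leq\tau^{-2}$ for $s\geq 1$ and $\psi>0$, and the gain of a factor $(s\tau^{2})^{-1}$ when trading the $w$-estimate down to the weight $se^{\tau\psi}$) checks out, as do the memberships $z_i$, $\pi-{\rm div}\,z$ and the components of ${\rm curl}\,z$ in $H_0^1(\mathcal{O};\mathbb{C})$ needed to invoke Theorem \ref{CarlEll}.
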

\begin{proof}
Set $f\ov -\Delta z+\nabla \pi$ and $g\ov {\rm div}\, z$. From $-\Delta z={\rm curl}\,({\rm curl}\, z)-\nabla({\rm div}\, z)$ we get:
\begin{eqnarray}
-\Delta z&=&\frac{1}{1+\alpha}\left({\rm curl}\,({\rm curl}\, z)-\nabla (g+\alpha\pi) +\alpha f\right) \quad \mbox{ in }\quad \mathcal{O},\label{eqPrCarl1}\\
-\Delta ({\rm curl}\, z)&=&{\rm curl}\,f \quad \mbox{ in }\quad \mathcal{O},\label{eqPrCarl2}\\
-\Delta (\pi-g)&=&-{\rm div}\,f \quad \mbox{ in }\quad \mathcal{O}.\label{eqPrCarl3}
\end{eqnarray}
Then \eqref{CarlEstStokes2} is obtained by applying \eqref{CarlEstEll} for $k=0$ to \eqref{eqPrCarl2} and to \eqref{eqPrCarl3}. 
Finally, \eqref{CarlEstStokes1} is obtained by first applying \eqref{CarlEstEll} for $k=1$ to \eqref{eqPrCarl1} and next using the estimate of 
${\rm curl}\, z$ given by \eqref{CarlEstStokes2}.
\end{proof}
 
\subsection{Uniqueness theorem for the Oseen system}
Here we give a first illustration of an application of Corollary \ref{CarlLocalStokes} that allows to recover in an easy way a uniqueness theorem for the Oseen equations. It also provides a sketch of 
the method of proof which is used in the next section to obtain uniqueness theorems for coupled Stokes type systems.
For $\lambda \in \mathbb{C}$ and $w^S\in (L_{\rm loc}^\infty(\Omega;\mathbb{R}))^d$ consider the following eigenvalue problem:
\begin{equation}\label{OseenVP}
\begin{split}
 \lambda  z -\Delta z-(D^s z)w^S+\nabla \pi&=0 \quad \mbox{ in }\quad \Omega,\\
{\rm div}\, z&=0 \quad \mbox{ in }\quad \Omega.
\end{split}
\end{equation}
Let us prove that every $z\in (H_{\rm loc}^1(\Omega;\mathbb{C}))^d$ solution of \eqref{OseenVP} which vanishes in a non empty open subset $\omega\subset\subset \Omega$  must 
be identically zero in $\Omega$. 
Such a \textit{distributed observability theorem} has been first proved by 
Fabre and Lebeau in \cite{FL1996} for Stokes equations with bounded potential. 
When $\omega$ has a smooth $C^2$ boundary and $w^S\in (H^2(\Omega;\mathbb{R}))^d$ the result has been obtained in 
\cite{Triggiani2009}. Such a uniqueness theorem implies Fattorini's criterion \eqref{UC} corresponding to the linearized Navier-Stokes equations and permits to construct
 a feedback or a dynamical control stabilizing the Navier-Stokes equations around a given stationary state $w^S$, see \cite{BT} for details.
\par More precisely, we prove the following theorem.
\begin{Theorem}\label{UCOseen}
Let $\Omega$ be a connected open subset of $\mathbb{R}^d$ with $d=2$ or $d=3$, $\lambda \in \mathbb{C}$ and $w^S\in (L_{\rm loc}^\infty(\Omega;\mathbb{R}))^d$. Suppose
that $(z,\pi)\in (H_{\rm loc}^1(\Omega;\mathbb{C}))^d\times L^2_{\rm loc}(\Omega;\mathbb{C})$ satisfies \eqref{OseenVP} and that $z$ vanishes on a non empty open subset of $\Omega$. Then 
$z$ is identically equal to zero in $\Omega$ and the function $\pi$ is constant.
\end{Theorem}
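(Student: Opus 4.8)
The plan is to combine interior elliptic regularity, the Carleman estimates of Corollary~\ref{CarlLocalStokes}, and a connectedness argument, following the classical scheme for unique continuation of elliptic systems from Carleman inequalities. Writing the momentum equation as $\nabla\pi-\Delta z=-\lambda z+(D^s z)w^S=:f$, I first note that $w^S\in (L^\infty_{\rm loc}(\Omega))^d$ gives the pointwise bound $|f|\le C(|z|+|\nabla z|)$ locally, \emph{without any derivative of $w^S$}; this is crucial, since the right-hand sides of \eqref{CarlEstStokes1}--\eqref{CarlEstStokes2} only involve $\nabla\pi-\Delta z$. As $f\in L^2_{\rm loc}$ and ${\rm div}\,z=0$, interior Stokes regularity upgrades the solution to $(z,\pi)\in (H^2_{\rm loc}(\Omega))^d\times H^1_{\rm loc}(\Omega)$, exactly the regularity needed to apply the corollary to compactly supported truncations. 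Finally, once $z\equiv 0$ is established, the equation forces $\nabla\pi=0$, so $\pi$ is constant on the connected set $\Omega$; it therefore suffices to prove $z\equiv 0$.

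\emph{Connectedness.} Let $U$ be the union of all open subsets of $\Omega$ on which $z$ vanishes; it is open and, by hypothesis, nonempty. I would show that $U$ is relatively closed in $\Omega$, so that $U=\Omega$ by connectedness. This reduces to a local propagation statement: if $z$ vanishes on an open set and $x_0$ lies on its frontier inside $\Omega$, then $z$ vanishes on a full neighborhood of $x_0$; iterating along a chain of balls covering a path joining the initial vanishing set to an arbitrary point of $\Omega$ then yields the claim. The heart of the argument is this local step.

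\emph{Local Carleman step.} Near $x_0$ I would run a sphere-sliding argument: pick a center $\bar x$ such that $z$ vanishes on a ball ``closer'' to $\bar x$, take $\psi(x)=C_0-|x-\bar x|^2$ on a small ball $\mathcal{O}$ with $\bar x\notin\overline{\mathcal{O}}$ (so that \eqref{eqpsi} holds and the level sets of $\psi$ are spheres centered at $\bar x$), and choose a cutoff $\chi\in C^\infty_c(\mathcal{O})$ equal to $1$ near $x_0$ with ${\rm supp}\,\nabla\chi$ lying in the region of strictly smaller weight. After normalizing the (locally constant) pressure so that it vanishes on the known vanishing region, I apply \eqref{CarlEstStokes1} with $\alpha=0$ to $(\chi z,\chi\pi)$: here ${\rm div}(\chi z)=\nabla\chi\cdot z$ and $\nabla(\chi\pi)-\Delta(\chi z)=\chi f+R$, with a commutator $R$ supported in ${\rm supp}\,\nabla\chi$ and involving only $z$, $\nabla z$ and $\pi$. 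The term $\chi f$ is absorbed into the left-hand side for $\tau$ and $s$ large, thanks to the factors $\tau^{-2}$ and $s^2\tau^2 e^{2\tau\psi}$, while the commutator and divergence terms are confined to ${\rm supp}\,\nabla\chi$, where the weight $e^{2se^{\tau\psi}}$ is strictly smaller than on $\{\chi=1\}$. Letting $s\to+\infty$ then forces $z=0$ on a neighborhood of $x_0$ strictly larger than the starting ball, completing the propagation.

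\emph{Main obstacle.} The delicate point is the pressure. Truncation destroys the divergence-free condition, and the roughness of $w^S$ forbids the naive vorticity approach (taking ${\rm curl}$ of the equation would produce $\nabla w^S$). The remedy is to exploit that \eqref{CarlEstStokes1} with $\alpha=0$ annihilates the divergence term for genuinely solenoidal fields, to normalize $\pi$ so that it vanishes on the known region (legitimate since $\pi$ is locally constant there and only $\nabla\pi$ enters the equation), and, if needed, to control the residual pressure on ${\rm supp}\,\nabla\chi$ via \eqref{CarlEstStokes2}, which bounds ${\rm div}\,z-\pi$. The second, more geometric, difficulty is to arrange $\psi$ and $\chi$ so that the uncontrolled part of ${\rm supp}\,\nabla\chi$ sits strictly below the level set through $x_0$ --- precisely the pseudoconvexity encoded in \eqref{eqpsi} --- so that the disparity in the exponential weights makes the remainder terms negligible as $s\to\infty$.
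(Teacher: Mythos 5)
Your overall architecture is the same as the paper's: use only $\nabla\pi-\Delta z$ on the right of \eqref{CarlEstStokes1}--\eqref{CarlEstStokes2} so that $w^S\in L^\infty_{\rm loc}$ suffices, absorb $(D^sz)w^S$ and $\lambda z$ by taking $\tau$ and then $s$ large, truncate with $\chi$ and confine the commutators and the divergence defect to ${\rm supp}\,\nabla\chi$, normalize the locally constant pressure, and propagate vanishing by a connectedness argument. All of these points are correct and match the paper's Steps 1 and 3.

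The gap is in the local propagation step, and it is exactly at the place you flag as the ``second, more geometric, difficulty'': the weight you actually write down does not resolve it. With $\psi(x)=C_0-|x-\bar x|^2$, the level sets are spheres centered at $\bar x$; if $z$ is known to vanish on $B(\bar x,R)$ and $x_0\in\partial B(\bar x,R)$, then any cutoff with $\chi\equiv 1$ on a neighborhood of $x_0$ forces ${\rm supp}\,\nabla\chi$ to contain points of $\partial B(\bar x,R)$ lying just to the side of $x_0$, i.e.\ points \emph{outside the vanishing set} on which $\psi$ equals $\psi(x_0)$. Hence the maximum of $\psi$ over the uncontrolled part of ${\rm supp}\,\nabla\chi$ is exactly $\psi(x_0)$, while any neighborhood of $x_0$ contains points with $\psi<\psi(x_0)$; letting $s\to\infty$ therefore only yields $z=0$ on $\{\psi>\psi(x_0)\}\cap\{\chi=1\}\subset B(\bar x,R)$, which is no new information. (Note also that the required strict separation of weights is \emph{not} ``encoded in \eqref{eqpsi}'': that hypothesis only asks $|\nabla\psi|>0$, and the convexification $e^{\tau\psi}$ does not create the missing strict inequality either.) The paper's proof fixes this by perturbing the weight: with the vanishing ball $B(x_0,R)$ and $x_1\in\partial B(x_0,R)$, it takes $\psi(x)=C_0-|x-x_0|^2-\tfrac12|x-x_1|^2$ on $\mathcal O=B(x_1,r)$, so that $\psi$ strictly decreases along $\partial B(x_0,R)$ as one moves away from $x_1$; then $\max_{\mathcal V^*}\psi\le C_0-R^2-r^2/8<\psi(x_1)$, and by continuity $\psi$ exceeds that maximum on $B(x_1,\varepsilon)\setminus B(x_0,R)$ for small $\varepsilon$, which is what makes the $s\to\infty$ limit in \eqref{CarlFinale} conclusive. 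An alternative repair keeping your purely radial weight is to make the known vanishing set a strictly \emph{larger} ball internally tangent at $x_0$ to the critical sphere of $\psi$ (so that the uncontrolled part of ${\rm supp}\,\nabla\chi$ is uniformly separated from that sphere once a small ball around $x_0$ is removed), but your phrase ``$z$ vanishes on a ball closer to $\bar x$'' does not set this up, and as written the argument does not close.
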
 
\begin{proof}
The proof consists in three steps. 
\medskip

{\bf Step 1. A local Carleman inequality.} Let $\mathcal{O}\subset\subset \Omega$ and $\psi$ satisfying \eqref{eqpsi}. From \eqref{CarlEstStokes1} and \eqref{CarlEstStokes2} with $\alpha=0$ 
we get that every $(z,\pi)\in (H_0^2({\mathcal{O}};\mathbb{C}))^d\times H_0^1({\mathcal{O}};\mathbb{C})$ satisfies:
\begin{equation}
\begin{split}
\int_{\mathcal{O}} &\left(|\nabla z|^2 +s^2\tau ^2 e^{2\tau \psi} |z|^2+s\tau ^2 e^{\tau \psi} |\pi-{\rm div} z|^2\right) e^{2 s e^{\tau \psi}}{\rm d}x   \leq\\
&C\int_{\mathcal{O}} (se^{\tau \psi}  |{\rm div}\, z|^2+| \lambda  z -\Delta z-(D^s z) w^S+\nabla \pi|^2) e^{2 s e^{\tau \psi}}{\rm d}x   .\label{CarlEstOseen1}
\end{split}
\end{equation}
Indeed, it suffices to choose $\tau $ in \eqref{CarlEstStokes1} large enough so that the first order term $(D^s z)w^S$ is absorbed by the left side of the inequality 
(since $w^S\in (L^\infty({\mathcal{O}};\mathbb{R}))^d$), and thus to choose $s$ large enough
so that the zero order term $\lambda  z$ is absorbed by the left hand side of the inequality. Thus, we combine the resulting inequality with \eqref{CarlEstStokes2}.

Next, suppose that $(z,\pi)\in (H_{\rm loc}^1(\Omega;\mathbb{C}))^d\times L_{\rm loc}^2(\Omega;\mathbb{C})$ is a solution of \eqref{OseenVP}. 
Let $\mathcal{O}_1$ and  $\mathcal{O}^*$ be two open subsets of $\mathbb{R}^d$ satisfying
\begin{equation}\label{Defopen}
{\mathcal{O}}_1\subset\subset {\mathcal{O}}\quad\mbox{ and }\quad  {\mathcal{O}}^*\ov {\mathcal{O}}\setminus \overline {\mathcal{O}}_1,
\end{equation}
and let $\chi:\mathcal{O}\to \mathbb{R}$ a cut-off function such that 
\begin{equation}\label{Defchi}
\chi\in C_c^\infty({\mathcal{O}};\mathbb{R}),\quad \chi\equiv 1\;\mbox{ in }\;{\mathcal{O}}_1\subset\subset {\mathcal{O}}\quad\mbox{ and }\quad   0\leq \chi\leq  1\; \mbox{ in }\; {\mathcal{O}}^*. 
\end{equation}
From \eqref{OseenVP} and regularity results for Stokes equations we get that 
$(\chi z,\chi\pi)\in (H_0^2({\mathcal{O}};\mathbb{C}))^d\times H_0^1({\mathcal{O}};\mathbb{C})$ satisfies:
\begin{equation}\label{eqOseenchi}
\begin{split}
\lambda  (\chi z) -\Delta (\chi z)-(D^s (\chi z))w^S+\nabla (\chi \pi)&=-[\Delta +(w^S\cdot\nabla)+(w^S\cdot\,^t\nabla), \chi]z+[\nabla, \chi]\pi\quad \mbox{ in }\quad \Omega\\
{\rm div}\, (\chi z)&=[{\rm div}, \chi]z \quad \mbox{ in }\quad \Omega,
\end{split}
\end{equation}
where $[\cdot,\cdot]$ denote commutators. Then applying \eqref{CarlEstOseen1} to $(\chi z,\chi \pi)$ and using the fact that commutators in \eqref{eqOseenchi} are supported in 
$\mathcal{O}^*$ we deduce that solutions of \eqref{OseenVP} obey:
\begin{equation}\label{CarlFinale}
\int_{\mathcal{O}_1} e^{2 s e^{\tau \psi }}(|z|^2+|\pi|^2){\rm d}x   \leq C_{\chi,\tau ,\psi} \int_{\mathcal{O}^*}e^{2 s e^{\tau \psi }}(|z|^2+|\nabla z|^2+|\pi|^2){\rm d}x   .
\end{equation}
\medskip

{\bf Step 2. A local uniqueness theorem.}  Now let us use inequality \eqref{CarlFinale} to prove that if $z$ vanishes in a relatively compact ball of $\Omega$ then $z$ is necessarily 
zero in a neighborhood of such a ball. More precisely, let $x_0\in \mathbb{R}^d$ and $R>0$, denote by $B(x_0,R)$ the open ball centered at $x_0$ with radius $R$ and suppose that $B(x_0,R)\subset\subset \Omega$ and $z\equiv 0$ in 
$B(x_0,R)$. Since $\partial B(x_0,R)$ is compact, to prove that $z$ vanishes in an open neighborhood of $\partial B(x_0,R)$ it suffices to prove that for each $x_1\in \partial B(x_0,R)$
there is a ball $B(x_1,\varepsilon)$, $\varepsilon>0$, in which $z$ vanishes.

For $x_1\in \partial B(x_0,R)$ choose $\mathcal{O}=B(x_1,r)$, $\mathcal{O}_1=B(x_1,r/2)$, $\mathcal{O}^*=\mathcal{O}\backslash \overline{\mathcal{O}}_1$ with 
$0<r<R/2$ small enough such that $B(x_0,R)\cup \mathcal{O}\subset \Omega$ and set 
$$\psi(x)=C_0 -|x-x_0|^2-\frac{1}{2} |x-x_1|^2,$$ 
where $C_0>0$ is large enough so that $\psi > 0$ in $\mathcal{O}$. Moreover, we verify that $|\nabla\psi|>0$ and then \eqref{eqpsi} is satisfied. Thus, set 
$\mathcal{V}=\mathcal{O}\setminus B(x_0,R)$, $\mathcal{V}_\varepsilon=B(x_1,\varepsilon)\setminus B(x_0,R)$ for $\varepsilon>0$ and $\mathcal{V}^*=\mathcal{O}^*\setminus B(x_0,R)$. 
Since we have
$\psi(x)<\psi(x_1)$ for all $x\in \overline{\mathcal{V}^*}$, by continuity we get for $\varepsilon>0$ small enough:
\begin{equation}
\max_{\mathcal{V}^*}{\psi}\ov \psi^*< \psi_1\ov \min_{\mathcal{V}_\varepsilon}\psi.\label{maxminIneq}
\end{equation}
From \eqref{OseenVP} and $z\equiv 0$ in 
$B(x_0,R)$ we get that $\nabla\pi$ is zero in 
$B(x_0,R)$ and then $\pi=\overline{\pi}\in \mathbb{R}$ is constant in $B(x_0,R)$. 
Then we apply \eqref{CarlFinale} to  $(z,\pi-\overline{\pi})$ (which satisfies \eqref{OseenVP}) and since $z$ and $\pi-\overline{\pi}$ vanish in $B(x_0,R)$ we obtain
$$ 
\int_{\mathcal{V}_\varepsilon} (|z|^2+|\pi-\overline{\pi}|^2){\rm d}x   \leq e^{2 s (e^{\tau \psi^*}-e^{\tau \psi_1})}C_{\chi,\tau ,\psi} 
\int_{\mathcal{V}^*}(|z|^2+|\nabla z|^2+|\pi-\overline{\pi}|^2){\rm d}x   .
$$
Inequality \eqref{maxminIneq} implies that the right side of the above inequality tends to zero as $s$ goes to infinity, which 
yields $z\equiv 0$ and $\pi\equiv \overline{\pi}$ in $\mathcal{V}_\varepsilon$. 
\medskip

{\bf Step 3. A Connectivity argument.} Suppose that $z$ is a non zero solution of \eqref{OseenVP} which is vanishing in an open subset of $\Omega$, that is to say 
${\rm Supp}(z)\neq \Omega$ and ${\rm Supp}(z)\neq \emptyset$. Because $\Omega$ is connected and ${\rm Supp}(z)$ is a closed subset of $\Omega$ we have that 
${\rm Supp}(z)$ is not an open subset of $\Omega$. As a consequence, there exists $x^*\in {\rm Supp}(z)$ and $R^*>0$ small enough such that 
$B(x^*,R^*)\cap (\Omega\backslash {\rm Supp}(z))\neq \emptyset$ and $B(x^*,R^*)\subset\Omega$. Then we can choose 
$x_0\in \Omega \setminus {\rm Supp}(z)$ and $R_0>0$ such that 
$B(x_0,R_0) \subset B(x^*,R^*) $ and $x^*\in B(x_0,R_0)$ (for instance, for $x_0^*\in B(x^*,R^*)\cap (\Omega\backslash {\rm Supp}(z))$ choose 
$x_0=x^*+\frac{1}{3}(x_0^*-x^*)$ and $R_0=\frac{2}{3}|x_0^*-x^*|$). To summerize, we have:
\begin{equation}
x_0\in \Omega \setminus {\rm Supp}(z),\quad B(x_0,R_0)\subset \Omega \quad \mbox{ and }\quad B(x_0,R_0)\cap {\rm Supp}(z)\neq \emptyset.\label{Ball0}
\end{equation}
Next, because $\Omega\backslash {\rm Supp}(z)$ is open in $\mathbb{R}^d$, there exists $r_0\in (0,R_0)$ such that $B(x_0,r_0)\subset \Omega\backslash {\rm Supp}(z)$,
and we can introduce the value $\widehat R\in [r_0,R_0)$ defined by
$$
\widehat R\ov \sup\{R>0\mid B(x_0,R)\subset \Omega\backslash {\rm Supp}(z)\}.
$$
Since $\Omega\backslash {\rm Supp}(z)$ is open the above infimum is a maximum and $B(x_0,\widehat R)\subset \Omega\backslash {\rm Supp}(z)$.
Moreover, recall that \eqref{Ball0} guarantees $B(x_0,\widehat R)\subset B(x_0,R_0)\subset \Omega$ and $\widehat R<R_0$ which implies $B(x_0,\widehat R)\subset\subset \Omega$.
Then according to Step 2 above there exists $\varepsilon>0$ such that $B(x_0,\widehat R+\varepsilon)\subset \Omega \setminus {\rm Supp}(z)$ which contradicts the definition 
of $\widehat R$. In conclusion, ${\rm Supp}(z)=\emptyset$ or ${\rm Supp}(z)=\Omega$, which is to say that a 
non zero solution $z$ of \eqref{OseenVP} cannot vanish on an open subset of $\Omega$.
\end{proof}

\begin{Remark}
Note that the key argument in the proof of Theorem \ref{UCOseen} is to obtain the local Carleman inequality \eqref{CarlEstOseen1} related to Oseen system. Steps 2 and 3 of the proof
are standard and have been recalled for readability convenience, see for instance \cite{LebeauLeRousseauCOCV} or \cite[Appendix IV]{TucsnakWeiss}.
\end{Remark}

\subsection{Uniqueness theorem for adjoint MHD system}
Corollary \ref{CarlLocalStokes} also permits to prove a \textit{distributed observability theorem} for coupled Oseen systems. Here we prove a uniqueness theorem
related to the stabilizability of a MHD system (see Subsection \ref{stabMHD}). 

For $w^S\in (L_{\rm loc}^\infty(\Omega;\mathbb{R}))^d$ and $\theta^S\in (L_{\rm loc}^\infty(\Omega;\mathbb{R}))^d$ consider the following adjoint linearized MHD system:
\begin{equation}\label{MHDadj}
 \left\{
\begin{split}
 \lambda  z-\Delta z-(D^s z) w^S+(D^a \rho) \theta^S +\nabla \pi &=0 &\quad \mbox{ in }\Omega,\\
 \lambda  \rho-\Delta \rho+(D^s z) \theta^S-(D^a \rho) w^S+\nabla \kappa &=0&\quad \mbox{ in }\Omega,\\
{\rm div}\, z={\rm div}\, \rho&=0&\quad \mbox{ in }\Omega.
\end{split}
\right.
\end{equation}
\begin{Theorem}\label{UCMHD}
Let $\Omega$ be a connected open subset of $\mathbb{R}^d$ with $d=2$ or $d=3$, $\lambda \in \mathbb{C}$, $w^S\in (L_{\rm loc}^\infty(\Omega;\mathbb{R}))^d$ and 
$\theta^S\in (L_{\rm loc}^\infty(\Omega;\mathbb{R}))^d$. 
Suppose that $(z,\pi)\in (H_{\rm loc}^1(\Omega;\mathbb{C}))^d\times L^2_{\rm loc}(\Omega;\mathbb{C})$ and 
$(\rho,\kappa )\in (H_{\rm loc}^1(\Omega;\mathbb{C}))^d\times L^2_{\rm loc}(\Omega;\mathbb{C})$ satisfy \eqref{MHDadj} and 
that $z$ and ${\rm curl\, \rho}$ vanish on a non empty open subset $\omega\subset\subset\Omega$. Then $z$ and ${\rm curl\, \rho}$ are identically equal to zero in $\Omega$, the function $\pi$ is
constant and $\nabla  \kappa=-\lambda\rho$.
\end{Theorem}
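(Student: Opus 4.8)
The plan is to imitate the three-step proof of Theorem~\ref{UCOseen}: a local Carleman inequality, a local uniqueness statement on balls, and a connectivity argument. The only genuinely new ingredient is a \emph{coupled} local Carleman inequality adapted to the mixed observation ``$z$ and $\curl\rho$''. Fixing $\mathcal{O}\subset\subset\Omega$ and a weight $\psi$ as in \eqref{eqpsi}, I would apply \eqref{CarlEstStokes1} (with $\alpha=0$) to $(z,\pi)$ to control $|\nabla z|^2+s^2\tau^2e^{2\tau\psi}|z|^2$, and \eqref{CarlEstStokes2} to $(\rho,\kappa)$ to control $s\tau^2e^{\tau\psi}(|\curl\rho|^2+|\kappa|^2)$ (note ${\rm div}\,z={\rm div}\,\rho=0$). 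The algebraic fact that makes the coupling harmless is that the antisymmetric terms reduce to a curl, $(D^a\rho)a=a\times\curl\rho$, so $(D^a\rho)\theta^S$ and $(D^a\rho)w^S$ are pointwise bounded by $|\curl\rho|$, while $(D^sz)w^S$ and $(D^sz)\theta^S$ are bounded by $|\nabla z|$; since $w^S,\theta^S\in L^\infty_{\mathrm{loc}}$ this uses no derivative of the coefficients, which is essential as they are merely bounded.

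Writing the two equations of \eqref{MHDadj} in Stokes form, $-\Delta z+\nabla\pi=-\lambda z+(D^sz)w^S-\theta^S\times\curl\rho$ and $-\Delta\rho+\nabla\kappa=-\lambda\rho-(D^sz)\theta^S+w^S\times\curl\rho$, I would bound the right-hand sides of the two Carleman inequalities by $C(|z|^2+|\nabla z|^2+|\curl\rho|^2)$ and $C(|\rho|^2+|\nabla z|^2+|\curl\rho|^2)$ respectively. Taking $\tau$ large first absorbs the first-order terms (they carry the factor $\tau^{-2}$ in \eqref{CarlEstStokes1} and are dominated by the strong weights of \eqref{CarlEstStokes2}), and then taking $s$ large absorbs the zeroth-order term $\lambda z$ into $s^2\tau^2e^{2\tau\psi}|z|^2$; adding the inequalities, all the $z$- and $\curl\rho$-contributions on the right are swallowed by the left.

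The main obstacle is the zeroth-order term $\lambda\rho$ in the magnetic equation: it leaves a term $|\rho|^2$ on the right coming from \eqref{CarlEstStokes2} and, in contrast with $\lambda z$, it cannot be absorbed directly, since \eqref{CarlEstStokes2} controls $\curl\rho$ and $\kappa$ but \emph{not} $\rho$ itself. To recover $L^2$-control of $\rho$ I would supplement the two inequalities with \eqref{CarlEstStokes1} applied to $(\rho,\kappa)$, which supplies $s^2\tau^2e^{2\tau\psi}|\rho|^2$ on the left and thus absorbs the offending term. I expect the crux of the whole argument to be the reconciliation of this step with the curl-only observation: the cut-off commutators produced when localizing the magnetic field are supported in the annular region $\mathcal{O}^*$ and a priori involve $\rho$, $\nabla\rho$ and $\kappa$, whereas the connectivity argument requires the right-hand side to involve only $z$ and $\curl\rho$ (together with their first derivatives), which are the quantities that vanish on the reference ball. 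Organizing these commutators — via the div--curl identity $\int_{\mathcal O}|\nabla u|^2=\int_{\mathcal O}(|\curl u|^2+|{\rm div}\,u|^2)$, valid for $u\in H_0^1$, applied to $\chi\rho$ — so that the surviving terms are expressed through $\curl\rho$ is, I believe, the delicate point that must be handled with care (and is also where the role of the coupling constant $\lambda$ becomes visible).

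Once the local inequality is in hand, Steps~2 and~3 are verbatim as in the proof of Theorem~\ref{UCOseen}, now applied to the quadruple $(z,\pi,\curl\rho,\kappa)$: on a ball $B(x_0,R)$ where $z=0$ and $\curl\rho=0$ one reads off from \eqref{MHDadj} that $\nabla\pi=-\theta^S\times\curl\rho=0$, so $\pi$ equals a constant $\bar\pi$ there, and one applies the inequality to $(z,\pi-\bar\pi,\rho,\kappa)$ to propagate the vanishing of $z$ and $\curl\rho$ to a neighbourhood; the connectivity of $\Omega$ then yields $z\equiv0$ and $\curl\rho\equiv0$ in $\Omega$. Finally, $\curl\rho\equiv0$ together with ${\rm div}\,\rho=0$ gives $\Delta\rho=0$, whence the two equations of \eqref{MHDadj} reduce to $\nabla\pi=0$ and $\lambda\rho+\nabla\kappa=0$, i.e. $\pi$ is constant and $\nabla\kappa=-\lambda\rho$, as claimed.
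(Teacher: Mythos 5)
Your overall architecture (local Carleman inequality, then the ball-propagation and connectivity arguments of Theorem \ref{UCOseen}) is the right one, and your treatment of the velocity equation and of the algebraic identity expressing $(D^a\rho)b$ through $\curl\rho$ matches the paper. The gap is in the magnetic-field equation, exactly at the point you yourself flag as ``delicate'' but do not resolve. By keeping $\rho$ as the unknown and invoking \eqref{CarlEstStokes1} for $(\rho,\kappa)$ to absorb the zeroth-order term $\lambda\rho$, you obtain an interior estimate whose left-hand side controls $|\rho|^2$ --- but once you localize with the cut-off $\chi$, the commutators $[\Delta,\chi]\rho$ and $[\nabla,\chi]\kappa$ put $|\rho|^2+|\nabla\rho|^2+|\kappa|^2$ on the right-hand side, integrated over the annulus $\mathcal{O}^*$ with the full Carleman weight. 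In Step 2 the propagation argument requires the right-hand integrand to vanish on $\mathcal{O}^*\cap B(x_0,R)$, so that the surviving integral is confined to $\mathcal{V}^*$ where $\psi\le\psi^*<\psi_1$ and is killed as $s\to\infty$; but only $z$, $\nabla z$, $\pi-\overline{\pi}$ and $\curl\rho$ vanish on that ball --- $\rho$, $\nabla\rho$ and $\kappa$ do not (there one only gets $\lambda\rho+\nabla\kappa=0$). The unweighted global identity $\int_{\mathcal{O}}|\nabla u|^2=\int_{\mathcal{O}}(|\curl u|^2+|\mathrm{div}\,u|^2)$ cannot convert these weighted annular terms into $\curl\rho$ terms, so the local uniqueness step fails as written.

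The paper removes the obstruction at the source: it applies $\curl$ to the second equation of \eqref{MHDadj} and works with the closed system \eqref{MHDadjbis} in the unknowns $(z,\zeta,\pi)$ with $\zeta=\curl\rho$. This simultaneously eliminates $\nabla\kappa$, turns the problematic zeroth-order term $\lambda\rho$ into $\lambda\zeta$ (absorbable, since $\zeta$ is controlled on the left), and writes every coupling term through $\nabla z$ and $\zeta$ only, via $(D^a\rho)b=S(b)\curl\rho$. The estimate for $\zeta$ is then the scalar elliptic Carleman inequality \eqref{CarlEstEll} with $k=0$, with the first-order terms (including the cut-off commutators involving $\nabla\zeta$) placed in divergence form $F_1$, so the final localized inequality \eqref{CarlMHD} has right-hand side $|z|^2+|\nabla z|^2+|\pi|^2+|\zeta|^2$ over $\mathcal{O}^*$ --- precisely the quantities that vanish on the reference ball. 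If you rework your Step 1 along these lines (take the curl first, then estimate $\zeta$ by \eqref{CarlEstEll} rather than $\rho$ by \eqref{CarlEstStokes1}--\eqref{CarlEstStokes2}), the rest of your argument, including the final deduction that $\pi$ is constant and $\nabla\kappa=-\lambda\rho$, goes through as you describe.
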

\begin{proof}
The point is to use Corollary \ref{CarlLocalStokes} to get an inequality for \eqref{MHDadj} analogue to \eqref{CarlFinale}. 
Then the result will follow from a local uniqueness theorem and a connectivity argument completely analogous to steps 2 and 3 of the proof of Theorem \ref{UCOseen}. 

 First, set $\zeta\ov {\rm curl}\, \rho$, apply the operator ${\rm\, curl}$ to the second equality in \eqref{MHDadj} to get
\begin{equation}\label{MHDadjbis}
 \left\{
\begin{split}
 \lambda  z-\Delta z-(D^s z) w^S+S(\theta^S)\zeta +\nabla \pi &=0 &\quad \mbox{ in }\Omega,\\
 \lambda  \zeta-\Delta \zeta+{\rm curl}\left( (D^s z) \theta^S-S(w^S)\zeta \right)&=0&\quad \mbox{ in }\Omega,\\
{\rm div}\, z&=0&\quad \mbox{ in }\Omega.
\end{split}
\right.
\end{equation}
To obtain \eqref{MHDadjbis} we have used the relation $(D^a \rho) b=S(b){\rm curl}\, \rho$
where  
$$
S(b)\ov \vct{-b_2}{b_1}\,\mbox{ if $d=2$}\quad \mbox{ and }\quad  S(b) \ov \mtx{0}{b_3}{-b_2}{-b_3}{0}{b_1}{b_2}{-b_1}{0}\quad\mbox{ if $d=3$}.
$$
Next, let ${\mathcal{O}}\subset\subset \Omega$ and $\psi:{\mathcal{O}}\to \mathbb{R}$ satisfying \eqref{eqpsi}, suppose for the moment that 
$(z,\zeta, \pi)\in (H_0^2({\mathcal{O}};\mathbb{C}))^d\times (H_0^2({\mathcal{O}};\mathbb{C}))^{2 d-3} \times H_0^1({\mathcal{O}};\mathbb{C})$ and let us prove that such $(z,\zeta, \pi)$ 
obeys
\begin{equation}\label{CarlE0stMHD1}
\begin{split}
 &\int_{\mathcal{O}} \left(s^2\tau ^2 e^{2\tau \psi} |z|^2+s\tau ^2 e^{\tau \psi} (|\pi-{\rm div}\, z|^2+|\zeta|^2)\right) e^{2 s e^{\tau \psi}}{\rm d}x   \\
&\leq
C\int_{\mathcal{O}} (se^{\tau \psi}  |{\rm div}\, z|^2+| \mathcal{L}_1(z,\zeta,\pi)|^2+| \mathcal{L}_2(z,\zeta)|^2) e^{2 s e^{\tau \psi}}{\rm d}x   ,
\end{split}
\end{equation}
where
\begin{equation}\label{eqLcal}
\begin{split}
\mathcal{L}_1(z,\zeta,\pi)&\ov \lambda  z -\Delta z-(D^s z) w^S+S(\theta^S)\zeta+\nabla \pi,\\
\mathcal{L}_2(z,\zeta)&\ov \lambda  \zeta-\Delta \zeta+{\rm curl}\left( (D^s z) \theta^S-S(w^S)\zeta \right).
\end{split}
\end{equation}
For that, first apply \eqref{CarlEstEll} to $\zeta$ (with $k=0$) and obtain
\begin{equation}\label{ineinterm}
\begin{split}
 \int_{\mathcal{O}}s\tau ^2 e^{\tau \psi} |\zeta|^2 e^{2 s e^{\tau \psi}}{\rm d}x   \leq C
\int_{\mathcal{O}} \left(s^{-2}\tau ^{-2}  |\mathcal{L}_2(z,\zeta)-\lambda \zeta|^2+|(D^s z) \theta^S-S(w^S)\zeta |^2\right) e^{2 s e^{\tau \psi}}{\rm d}x   .
\end{split}
\end{equation}
Thus, apply \eqref{CarlEstStokes1} (with $\alpha=0$) and \eqref{CarlEstStokes2} to $(z,\pi)$, add the resulting inequality to \eqref{ineinterm} and choose 
$\tau $ and $s$ large enough so that the first order terms $(D^s z) w^S$, $(D^s z) \theta^S$ and the zero order terms 
$S(\theta^S) \zeta$, $S(w^S) \zeta$, $\lambda  z$, $\lambda  \zeta$ are absorbed by the left hand side of the inequality.

Finally, let $\mathcal{O}_1$ and  $\mathcal{O}^*$ two open subsets of $\mathbb{R}^d$ satisfying \eqref{Defopen} and let $\chi:\mathcal{O}\to \mathbb{R}$ a cut-off function 
satisfying \eqref{Defchi}. If $(z,\zeta, \pi)\in (H_{\rm loc}^1(\Omega;\mathbb{C}))^d\times (H_{\rm loc}^1(\Omega;\mathbb{C}))^{2d-3} \times L^2_{\rm loc}(\Omega;\mathbb{C})$ satisfies
\eqref{MHDadjbis} then by elliptic and Stokes regularity we get that $(\chi z,\chi \zeta, \chi \pi)\in (H_0^2({\mathcal{O}};\mathbb{C}))^d\times (H_0^2({\mathcal{O}};\mathbb{C}))^{2d-3}\times H_0^1({\mathcal{O}};\mathbb{C})$ and 
\eqref{CarlE0stMHD1} applied to $(\chi z,\chi \zeta, \chi \pi)$ yields:
\begin{equation}\label{CarlMHD}
\int_{\mathcal{O}_1} e^{2 s e^{\tau \psi }}(|z|^2+|\pi|^2+|\zeta|^2){\rm d}x   \leq C_{\chi,\tau ,\psi} \int_{\mathcal{O}^*}
e^{2 s e^{\tau \psi }}(|z|^2+|\nabla z|^2+|\pi|^2+|\zeta|^2){\rm d}x   .
\end{equation}
Then we conclude with a local uniqueness theorem and a connectivity argument as for Theorem \ref{UCOseen}. 

Note that the facts that $\pi$ is
constant and that $\nabla  \kappa=-\lambda\rho$ are direct consequences of the equations \eqref{MHDadj} with $z\equiv 0$ and ${\rm curl\, \rho}\equiv 0$ in $\Omega$ 
(since ${\rm div\,}\rho=0$ implies $-\Delta\rho={\rm curl\,} {\rm curl\, \rho}$).
\end{proof}
Now assume that $\Omega$ is a bounded domain of $\mathbb{R}^d$ with a Lipschitz-continuous boundary, denote by $n$ the unit exterior normal vector field defined on $\partial\Omega$ 
and introduce the following $N$-dimensional space:
\begin{equation}
X_N\ov \{y\in (L^2(\Omega;\mathbb{C}))^d\mid {\rm div\,} y=0 \mbox{ in }\Omega,\quad {\rm curl\,} y=0 \mbox{ in }\Omega,\quad y\cdot n=0\mbox{ on } \partial\Omega\}.\label{defX}
\end{equation}
The fact that the above space is finite dimensional is well-known, see for instance \cite[Chap. IX]{DautrayLionsV5} 
for a detailed characterization of $X_N$. We only recall that if $\Omega$ is simply-connected we have $N=0$ and 
$X_N$ reduces to $\{0\}$ and if $\Omega$ is multiply-connected then $N\geq 1$ is the number of cuts required to make $\Omega$ simply-connected. 
Note that if $d=2$ then $N+1$ is exactly 
to the number
of connected components of $\partial\Omega$. 

 The following straightforward consequence of Theorem \ref{UCMHD} holds.
\begin{Corollary}\label{CorMHD}
In addition to the hypotheses of Theorem \ref{UCMHD}, assume that $\Omega$ is a bounded domain of $\mathbb{R}^d$ with a Lipschitz-continuous boundary and suppose that
\begin{equation}
\rho\cdot n=0\quad \mbox{ on }\quad \partial\Omega.\label{rhonnull}
\end{equation}
Then the following results hold.
\begin{enumerate}
\item If $\lambda\neq 0$ then $z$ and $\rho$ are identically equal to zero and the functions $\pi$ and $\kappa$ are constant in $\Omega$.
\item If $\lambda=0$ then $z$ is identically equal to zero in $\Omega$ and $\rho\in X_N$. Moreover, the functions $\pi$ and $\kappa$ are constant in $\Omega$. 
In particular, if $\Omega$ is simply-connected then $\rho$ is identically equal to zero in $\Omega$.
\end{enumerate}
\end{Corollary}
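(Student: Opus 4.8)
The plan is to derive the statement directly from Theorem~\ref{UCMHD} by feeding in the extra normal boundary condition \eqref{rhonnull} and reading off the definition \eqref{defX} of $X_N$. Since the hypotheses of Theorem~\ref{UCMHD} are assumed to hold, that theorem already furnishes $z\equiv 0$ in $\Omega$, ${\rm curl}\,\rho\equiv 0$ in $\Omega$, $\pi$ constant, and $\nabla\kappa=-\lambda\rho$ in $\Omega$. The first observation I would make is that $\rho$ then satisfies the three defining properties of $X_N$: indeed ${\rm div}\,\rho=0$ comes from the third line of \eqref{MHDadj}, ${\rm curl}\,\rho=0$ is given by Theorem~\ref{UCMHD}, and $\rho\cdot n=0$ on $\partial\Omega$ is precisely the added hypothesis \eqref{rhonnull}; hence $\rho\in X_N$. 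Here I am implicitly using that, in the relevant setting, $\rho$ is globally $L^2$ up to the boundary and that ${\rm div}\,\rho=0$ makes the normal trace $\rho\cdot n$ meaningful in $H^{-1/2}(\partial\Omega)$.

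For the case $\lambda=0$, the relation $\nabla\kappa=-\lambda\rho$ collapses to $\nabla\kappa=0$, so $\kappa$ is constant, while $z\equiv 0$, $\rho\in X_N$, and the constancy of $\pi$ are all already in hand. The simply connected subcase then follows immediately, since in that case $N=0$ and $X_N=\{0\}$, which forces $\rho\equiv 0$.

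For the case $\lambda\neq 0$, the new ingredient is that $\rho=-\lambda^{-1}\nabla\kappa$ is a \emph{gradient field} lying in $X_N$, and the key step is to show such a field must vanish. Setting $\phi\ov -\kappa/\lambda$, so that $\rho=\nabla\phi$ with $\phi\in H^1(\Omega)$ (because $\nabla\phi=\rho\in L^2(\Omega)$ and $\kappa\in L^2_{\rm loc}$), I would integrate by parts:
\[
\int_\Omega|\rho|^2\,{\rm d}x=\int_\Omega\rho\cdot\nabla\phi\,{\rm d}x=-\int_\Omega({\rm div}\,\rho)\,\phi\,{\rm d}x+\langle\rho\cdot n,\phi\rangle_{\partial\Omega}=0,
\]
where both terms on the right vanish by ${\rm div}\,\rho=0$ in $\Omega$ and $\rho\cdot n=0$ on $\partial\Omega$. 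This yields $\rho\equiv 0$, whence $\nabla\kappa=-\lambda\rho=0$ gives $\kappa$ constant, and $\pi$ is constant by Theorem~\ref{UCMHD}.

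The bulk of the argument is bookkeeping once Theorem~\ref{UCMHD} is granted, so the only genuinely delicate point is the $\lambda\neq 0$ case, namely the assertion that a divergence-free, tangential gradient field is trivial. The care needed there is twofold: justifying the integration by parts on a merely Lipschitz domain by interpreting $\rho\cdot n$ as a normal trace in $H^{-1/2}(\partial\Omega)$ paired against $\phi|_{\partial\Omega}\in H^{1/2}(\partial\Omega)$, and ensuring $\phi$ (equivalently $\kappa$) is globally $H^1(\Omega)$ so that this trace is well defined. Both are unproblematic in the application of Section~\ref{stabMHD}, where the solutions come from $\mathcal{D}(A^*)\subset (H^2(\Omega))^d$.
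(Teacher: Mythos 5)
Your argument is correct and is exactly the intended one: the paper offers no written proof, presenting the corollary as a ``straightforward consequence'' of Theorem \ref{UCMHD}, and the natural derivation is precisely what you give (read off $z\equiv 0$, ${\rm curl}\,\rho\equiv 0$, $\pi$ constant, $\nabla\kappa=-\lambda\rho$; combine with ${\rm div}\,\rho=0$ and \eqref{rhonnull} to place $\rho$ in $X_N$; and for $\lambda\neq 0$ kill the divergence-free, tangential gradient field by integration by parts). Your remarks on the implicit global regularity needed to make $\rho\cdot n$ and the trace of $\kappa$ meaningful on a Lipschitz boundary are apt and consistent with the setting in which the corollary is applied.
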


\subsection{Uniqueness theorem for adjoint micropolar system}
Corollary \ref{CarlLocalStokes} also provides a \textit{distributed observability theorem} related to stabilizability of a micropolar system (see Subsection \ref{stabMicro}).
 
\begin{Theorem}\label{UCMicro}
Let $\Omega$ be a connected open subset of $\mathbb{R}^d$ with $d=2$ or $d=3$, $\lambda \in \mathbb{C}$, $w^S\in (L_{\rm loc}^\infty(\Omega;\mathbb{R}))^d$ and 
$\theta^S\in (L_{\rm loc}^\infty(\Omega;\mathbb{R}))^d$. 
Suppose that $(z,\pi)\in (H_{\rm loc}^1(\Omega;\mathbb{C}))^d\times L^2_{\rm loc}(\Omega;\mathbb{C})$ and 
$(\rho,\kappa )\in (H_{\rm loc}^1(\Omega;\mathbb{C}))^d\times L^2_{\rm loc}(\Omega;\mathbb{C})$ satisfy \eqref{Microadj} and that $z$ and $\rho$ vanish on a non empty open subset 
$\omega\subset\subset\Omega$. Then $z$ and $\rho$ are identically equal to zero in $\Omega$ and the functions $\pi$ and $\kappa $ are constant.
\end{Theorem}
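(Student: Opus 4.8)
The plan is to mirror the three-step scheme used for Theorem~\ref{UCOseen}: a local Carleman inequality for the coupled system \eqref{Microadj}, a local uniqueness statement, and a connectivity argument. As for Theorem~\ref{UCMHD}, only the Carleman step is genuinely new, since the coupling between $z$ and $\rho$ has a different structure; once the analogue of \eqref{CarlFinale} is established, the local uniqueness and connectivity arguments (Steps~2 and~3 of the proof of Theorem~\ref{UCOseen}) transpose verbatim. Writing $\mathcal{M}_1\ov\lambda z-\Delta z-(D^s z)w^S-\,^t(\nabla\rho)\theta^S-{\rm curl}\,\rho+\nabla\pi$ and $\mathcal{M}_2\ov\lambda\rho-\Delta\rho-(\nabla\rho)w^S-{\rm curl}\,z+\nabla\kappa$ for the two momentum residuals of \eqref{Microadj}, the core task is to prove, for $\mathcal{O}\subset\subset\Omega$ and $\psi$ as in \eqref{eqpsi}, that every $(z,\rho)\in(H_0^2(\mathcal{O};\mathbb{C}))^d\times(H_0^2(\mathcal{O};\mathbb{C}))^d$ and $(\pi,\kappa)\in(H_0^1(\mathcal{O};\mathbb{C}))^2$ obeys
\[
\int_{\mathcal{O}} \Big( s^2\tau^2 e^{2\tau\psi}(|z|^2+|\rho|^2) + s\tau^2 e^{\tau\psi}(|\pi|^2+|\kappa|^2) \Big)\, e^{2 s e^{\tau\psi}}\,{\rm d}x \leq C\int_{\mathcal{O}} \Big( s e^{\tau\psi}(|{\rm div}\, z|^2 + |{\rm div}\,\rho+\kappa|^2) + |\mathcal{M}_1|^2 + |\mathcal{M}_2|^2 \Big)\, e^{2 s e^{\tau\psi}}\,{\rm d}x .
\]

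The mechanism I would use rests on applying Corollary~\ref{CarlLocalStokes} separately to $(z,\pi)$ and to $(\rho,\kappa)$ with two different choices of the free parameter $\alpha$. For the first line of \eqref{Microadj} I would use \eqref{CarlEstStokes1} with $\alpha=0$ together with \eqref{CarlEstStokes2}, exactly as in the Oseen case; here $\nabla\pi-\Delta z$ equals $\mathcal{M}_1$ plus the lower-order contributions $-\lambda z+(D^s z)w^S+\,^t(\nabla\rho)\theta^S+{\rm curl}\,\rho$, and \eqref{CarlEstStokes2} provides control of $|{\rm div}\,z-\pi|^2=|\pi|^2$ through $\alpha=0$ and ${\rm div}\,z=0$. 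For the second line I would use \eqref{CarlEstStokes1} with $\alpha=1$: this choice is tailored to the constraint ${\rm div}\,\rho+\kappa=0$, which annihilates the divergence source term $|{\rm div}\,\rho+\kappa|^2$, while \eqref{CarlEstStokes2} then yields $|{\rm div}\,\rho-\kappa|^2=4|\kappa|^2$ and hence control of $\kappa$; here $\nabla\kappa-\Delta\rho$ equals $\mathcal{M}_2$ plus $-\lambda\rho+(\nabla\rho)w^S+{\rm curl}\,z$.

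Adding the two resulting inequalities, the purely coupling contributions appearing on the right are $\tau^{-2}|\nabla\rho|^2$ (produced by $\,^t(\nabla\rho)\theta^S$ and ${\rm curl}\,\rho$ in the first equation) and $\tau^{-2}|\nabla z|^2$ (produced by ${\rm curl}\,z$ in the second). The crux of the argument, and the step I expect to be the main obstacle, is that each of these is absorbed, thanks to the gain $\tau^{-2}$, by the first-order term $|\nabla\rho|^2$, resp.\ $|\nabla z|^2$, carried on the left-hand side of the \emph{other} \eqref{CarlEstStokes1}-estimate, provided $\tau$ is taken large enough; the control of $\pi$ and $\kappa$ extracted from \eqref{CarlEstStokes2} must be added with a small enough weight so as not to reintroduce the gradients $|\nabla z|^2,|\nabla\rho|^2$ at a scale that cannot be absorbed. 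The remaining genuinely lower-order terms $(D^s z)w^S$, $(\nabla\rho)w^S$ and $\lambda z$, $\lambda\rho$ are then absorbed exactly as in the Oseen case, first the first-order ones into $|\nabla z|^2,|\nabla\rho|^2$ for $\tau$ large, then the zero-order ones into $s^2\tau^2e^{2\tau\psi}(|z|^2+|\rho|^2)$ for $s$ large, using only $w^S,\theta^S\in(L_{\rm loc}^\infty(\Omega;\mathbb{R}))^d$.

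With the compactly supported estimate at hand, I would plug in $(\chi z,\chi\rho,\chi\pi,\chi\kappa)$ for a cut-off $\chi$ as in \eqref{Defopen}--\eqref{Defchi}: by elliptic and Stokes regularity these lie in the required spaces, they satisfy \eqref{Microadj} up to commutators supported in $\mathcal{O}^*$, and the divergence mismatches ${\rm div}(\chi z)=\nabla\chi\cdot z$ and ${\rm div}(\chi\rho)+\chi\kappa=\nabla\chi\cdot\rho$ are likewise supported in $\mathcal{O}^*$, so they enter only through the right-hand side. This yields the analogue of \eqref{CarlFinale}, after which Steps~2 and~3 of the proof of Theorem~\ref{UCOseen} apply without change, using that wherever $z$ and $\rho$ vanish the equations force $\nabla\pi=0$ and $\kappa=-{\rm div}\,\rho=0$, so that $\pi$ is locally constant and $\kappa$ locally zero, exactly the input needed in the local uniqueness step. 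Finally, once $z\equiv\rho\equiv0$ in $\Omega$ the system \eqref{Microadj} gives $\nabla\pi=\nabla\kappa=0$ and $\kappa=-{\rm div}\,\rho=0$, so $\pi$ and $\kappa$ are constant, which completes the argument.
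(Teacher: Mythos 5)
Your proposal follows essentially the same route as the paper: apply Corollary \ref{CarlLocalStokes} to $(z,\pi)$ with $\alpha=0$ and to $(\rho,\kappa)$ with $\alpha=1$, absorb the coupling and lower-order terms by taking $\tau$ and then $s$ large, cut off to get the analogue of \eqref{CarlFinale}, and conclude by the local uniqueness and connectivity arguments of Theorem \ref{UCOseen}. The argument is correct; the only cosmetic difference is that the paper keeps $|\pi-{\rm div}\,z|^2$ and $|\kappa-{\rm div}\,\rho|^2$ on the left rather than $|\pi|^2$ and $|\kappa|^2$, which changes nothing since the divergence mismatches are supported in $\mathcal{O}^*$.
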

\begin{proof}
Let us first consider two pairs 
$(z,\pi)$ and $(\rho,\kappa )$ of $(H_0^2({\mathcal{O}};\mathbb{C}))^d\times H_0^1({\mathcal{O}};\mathbb{C})$. 
By applying \eqref{CarlEstStokes1}, \eqref{CarlEstStokes2} to $(z,\pi)$ with $\alpha=0$ and 
\eqref{CarlEstStokes1}, \eqref{CarlEstStokes2} to $(\rho,\kappa )$ with $\alpha=1$, and choosing $\tau$ and $s$ large enough to absorb the zero and first order terms in $z$, $\rho$
we get:
\begin{equation}\label{CarlE0stMicro1}
\begin{split}
 &\int_{\mathcal{O}} \left(s^2\tau ^2 e^{2\tau \psi} (|z|^2+|\rho|^2)+s\tau ^2 e^{\tau \psi} (|\pi-{\rm div}\, z|^2+|\kappa -{\rm div}\, \rho|^2)\right) e^{2 s e^{\tau \psi}}{\rm d}x   \\
&\leq
C\int_{\mathcal{O}} (se^{\tau \psi}  (|{\rm div}\, z|^2+|{\rm div}\, \rho+\kappa |^2)+(| \mathcal{L}_1(z,\rho,\pi)|^2+| \mathcal{L}_2(z,\rho,\kappa )|^2)) e^{2 s e^{\tau \psi}}{\rm d}x   ,
\end{split}
\end{equation}
where
\begin{equation}\nonumber
\begin{split}
\mathcal{L}_1(z,\rho,\pi)&\ov \lambda  z- \Delta z-(D^s z) w^S-\,^t(\nabla \rho) \theta^S -{\rm curl}\, \rho+\nabla \pi,\\
\mathcal{L}_2(z,\rho,\kappa )&\ov \lambda  \rho-\Delta \rho-(\nabla \rho) w^S-{\rm curl}\, z+\nabla \kappa .
\end{split}
\end{equation}
Thus, let $\mathcal{O}_1$ and  $\mathcal{O}^*$ two open subsets of $\mathbb{R}^d$ satisfying \eqref{Defopen} and let $\chi:\mathcal{O}\to \mathbb{R}$ a cut-off function 
satisfying \eqref{Defchi}. For $(z,\pi,\rho,\kappa )$ satisfying \eqref{Microadj} we apply inequality 
\eqref{CarlE0stMicro1} to $(\chi z,\chi \pi,\chi\rho,\chi\kappa )$ and obtain 
\begin{equation}\label{CarlMHDbisbis}
\int_{\mathcal{O}_1} e^{2 s e^{\tau \psi }}(|z|^2+|\pi|^2+|\rho|^2+|\kappa|^2){\rm d}x   \leq C_{\chi,\tau ,\psi} \int_{\mathcal{O}^*}
e^{2 s e^{\tau \psi }}(|z|^2+|\nabla z|^2+|\pi|^2+|\rho|^2+|\nabla \rho|^2+|\kappa|^2){\rm d}x   .
\end{equation}
Then we conclude with a local uniqueness theorem and a connectivity argument as for Theorem \ref{UCOseen}. 
\end{proof}
\bibliography{ma_biblio}
\bibliographystyle{plain}
\end{document}